\newcommand*{\myproofname}{Proof}
\def\qed{\hfill\ifhmode\unskip\nobreak\fi\qquad\ifmmode\Box\else\hfill$\Box$\fi}
\newlength\figureheight 
\newlength\figurewidth 
\newtheorem{theorem}{Theorem}[section]
\newtheorem{lemma}[theorem]{Lemma}
\newtheorem{claim}[theorem]{Claim}
\newtheorem{proposition}[theorem]{Proposition}
\newtheorem{remark}[theorem]{Remark}
\theoremstyle{definition}
\newtheorem{defn}[theorem]{Definition}
\newtheorem{example}[theorem]{Example}
\theoremstyle{remark}
\title{Finding the Second-Best Candidate under the Mallows Model\thanks{Parts of the work will be presented at the International Symposium on Information Theory (ISIT) 2021, Melbourne, Australia.}}
\author{Xujun Liu and Olgica Milenkovic\\
University of Illinois, Urbana-Champaign}
\date{July 9, 2021}
\begin{document}
\maketitle

\begin{abstract}
The well-known secretary problem in sequential analysis and optimal stopping theory asks one to maximize the probability of finding the optimal candidate in a sequentially examined list under the constraint that accept/reject decisions are made in real-time. The problem has received significant interest in the mathematics community and is related to practical questions arising in online search, data streaming, daily purchase modeling and multi-arm bandit mechanisms. A version of the problem is the so-called \emph{postdoc problem,} for which the question of interest is to devise a strategy that identifies the second-best candidate with highest possible probability of success. 

We study the postdoc problem in its combinatorial form. In this setting, a permutation $\pi$ of length $N$ is sampled according to some distribution on the symmetric group $S_N$ and the elements of $\pi$ are revealed one-by-one from left to right so that at each step, one can only observe the relative orders of the elements. At each step, one must decide to either accept or reject the currently presented element and cannot recall the decision in the future. The question of interest is to find the optimal strategy for selecting the position of the second-largest value. We solve the postdoc problem for the untraditional setting where the candidates are not presented uniformly at random but rather according to permutations drawn from the Mallows distribution. The Mallows distribution assigns to each permutation $\pi \in S_N$ a weight $\theta^{c(\pi)}$, where the function $c$ counts the number of inversions in $\pi$. To identify the optimal stopping criteria for the significantly more challenging postdoc problem, we adopt a combinatorial methodology that includes new proof techniques and novel methodological extensions compared to the analysis first introduced in the setting of the secretary problem. The optimal strategies depend on the parameter $\theta$ of the Mallows distribution and can be determined exactly by solving well-defined recurrence relations.

%We solve the postdoc problem for the untraditional setting where the candidates are not presented uniformly at random but rather according to permutations drawn from the Mallows distribution. The Mallows distribution represents the counterpart of a Gaussian distribution for rankings. To address the problem, we extend the work of Jones~\cite{jones2020weighted} for the secretary setting and introduce a number of new proof techniques. Our results reveal that the optimal stopping criteria \replaced{depend}{depends} on the choice of the Mallows model parameter $\theta$: For $\theta > 1,$ the optimal strategy is to reject the first $k'(\theta)$ candidates and then accept the next left-to-right \emph{second-best} candidate. This coincides with the optimal strategy for the classical postdoc problem independently derived by Rose~\cite{rose1982postdoc} and Vanderbei~\cite{vanderbei2012postdoc}, which corresponds to $\theta=1$ and the rankings being drawn uniformly at random. For $0<\theta \le 1/2,$ the optimal strategy is to reject the first $k''(\theta)$ candidates and then accept the next left-to-right \emph{best} candidate; if no selection is made before the last candidate, then the last candidate is accepted. The most interesting strategy arises for $1/2<\theta<1,$ in which case -- and under certain mild constraints --  the optimal strategy is to reject the first $k_1(\theta)$ candidates and then accept the next left-to-right maximum, or reject the first $k_2(\theta) \ge k_1(\theta)$ candidates and then accept the next left-to-right second-maximum, whichever comes first.
\end{abstract}

\section{Introduction}
%\textcolor{red}{the references below are not showing up - can you fix that?}
The secretary problem was introduced by Cayley, but the first formal description was given by Gardner~\cite{G1, G2} in 1960. In its most well-known form, the question reads as follows: $N$ individuals can be ranked from best to worst according to their qualifications, without ties. They apply for a ``secretary'' position, and are interviewed one by one, in random order. When the $i^{\text{th}}$ candidate appears, we can only compare or rank her/him relative to the $i-1$ previously seen individuals. At the time of the $i^{\text{th}}$ interview, we can hire the person presented or continue with the interview process by rejecting the current candidate. Once a rejection is made, the decision cannot be recalled. We must
select one of the $N$ individuals. What selection strategy (i.e., stopping rule) maximizes the probability of selecting the best (highest ranked) candidate? 

The first published solution was given by Lindley~\cite{L1} using direct algebraic methods while Dynkin~\cite{D1} considered the process as a Markov chain and solved the problem in a different way. The solution turns out to be surprisingly elegant and simple: reject the first $N/e$ candidates, where $e$ is the base of the natural logarithm, and then select the first candidate that outranks all previously seen candidates\footnote{The initial rejection stage is referred to the \emph{exploration stage} of stage of the process.}. This strategy ensures a probability of successfully identifying the best candidate with probability $1/e$, when $N \to \infty$.  

Both the problem formulation and solution have several practical drawbacks. If the selection policy that rejects more than $1/3$ of the candidate without regards to their qualifications is publicly known, it is hard to incentivize candidates to appear for the interview. Furthermore, the actual number of candidates appearing for an interview is usually random, with an unknown distribution. The candidates may also be presented to the evaluator in a nonuniform order (e.g. Jones~\cite{jones2020weighted}) and multiple selections or queries may be allowed (e.g.~\cite{LMM1}). 

Despite these issues, the secretary problem has attracted significant interest in the theoretical computer science and machine learning community, as modifications of the problem allow for more realistic interview settings (e.g. Szajowski~\cite{szajowski2008rank}). The prophet problem, closely related to the secretary problem but involving probabilistic models has received significant attention as illustrated in the work by Esfandiari, Hajiaghayi, Liaghat, and Monemizadeh~\cite{EHLM1} and Rubinstein~\cite{R1} (and see references therein as well). The classical paper of Kleinberg~\cite{kleinberg2005multiple} introduced a variation of the original problem in which the algorithm is allowed to choose a fixed-sized subset of candidates, and the goal is to maximize their sum (provided that the best candidates have the highest values). The work also tied this problem to online auction analysis. The interested reader is also referred to the work by Babaioff, Immorlica, Kempe, and Kleinberg~\cite{BIKK1}. A stochastic version of the secretary problem with payoff values was introduced by Bearden~\cite{bearden2006new} and used to model how traders make their selling decisions. The more recent work of Zhao, Hu, Rahimi, and King~\cite{zhao2017s} demonstrated that the Groupon data describing the behavior of users in daily deal websites can be formulated in terms of the secretary problem. The work of  Jones~\cite{J1, jones2020weighted}, Fowlkes and Jones~\cite{FJ1}, and Crews, Jones, Myers, Taalman, Urbanski, and Wilson~\cite{CJMTUW1} departed from the standard assumption that candidates are interviewed uniformly at random and proposed using the Mallows model~\cite{mallows1957non} instead. This modeling strategy is of significant practical interest as candidates are usually not interviewed blindly but based on prior reviews of their resumes, side-information provided by other institutions or other evaluation approaches. The readers are referred to the paper of Busa-Fekete, Fotakis, Sz\"or\'enyi, and Zampetakis~\cite{BFSZ1} (and the references therein) for more details on the Mallows model.

Another extension of the secretary problem is in terms of identifying the $a^{th}$-best candidate, where $a \geq 2$. The case $a=2$, for which the goal is to identify the second-best candidate, is known as the \emph{postdoc problem}, and appears to have been introduced by Dynkin in the 1980s and was further studied by Bay\'on, Ayuso, Grau, Oller-Marc\'en, and Ruiz~\cite{BAGOR1, BAGOR2}. A rationale for choosing to find and hire the second best candidate is that the best candidate may be interviewed for multiple jobs and may not accept the given offer. An optimal selection strategy similar to the one derived for the secretary problem was first proposed by Rose~\cite{rose1982postdoc} and independently analyzed by Vanderbei~\cite{vanderbei2012postdoc} using Hamilton-Jacobi-Bellman equations. An optimal strategy involves an exploration stage after which the first left-to-right second-best candidate (i.e., second-best ranked when comparing with all appeared candidates) is selected for an offer. This strategy succeeds in finding the second-best candidate with probability $1/4$, given that $N \to \infty$. 

Here, we present the first study of the postdoc problem in the (exponential) Mallows model, parametrized by $\theta>0$. Our results reveal that for $\theta > 1,$ the optimal strategy is to reject the first $k'(\theta)$ candidates and then accept the next left-to-right second-best candidate. This coincides with the optimal strategy derived in~\cite{vanderbei2012postdoc} for which $\theta=1$ and rankings are drawn uniformly at random. For $0<\theta \le 1/2,$ the optimal strategy is to reject the first $k''(\theta)$ candidates and then accept the next left-to-right \emph{best} candidate; if no selection is made before the last candidate, then  the last candidate is accepted. The most interesting optimal strategy arises for $1/2<\theta<1,$ in which case under certain constraints the optimal strategy is to reject the first $k_1(\theta)$ candidates and then accept the next left-to-right maximum, or reject the first $k_2(\theta) \ge k_1(\theta)$ candidates and then accept the next left-to-right second-maximum, whichever comes first. Although some of our proofs build upon the techniques described in~\cite{jones2020weighted}, most of the results require new combinatorial ideas and strategies that are significantly more complicated than their secretary problem counterparts. Moreover, our result implies as a special case a combinatorial proof of the classical postdoc problem ($\theta = 1$) which differs from the one presented in~\cite{rose1982postdoc} and~\cite{vanderbei2012postdoc}.

The paper is organized as follows. Section~\ref{sec:preliminaries} introduces the relevant concepts, terminology and models used throughout the paper. This section also contains a number of technical lemmas that help in establishing our main results pertaining to the optimal selection strategies described in Section~\ref{sec:strategies}. An in-depth analysis of the exploration phase length and the probability of success for the postdoc selection process under the Mallows distribution is presented in Section~\ref{sec:mallows}. Simulation results for exploration phase lengths versus $\theta$, the parameter of the Mallows distribution, are listed at the end of Section~\ref{sec:mallows}. %in Section~\ref{sec:simulations}. 
%\textcolor{green}{shall we say: In the last section, we list a few interesting open problems, which can possibly be extended from our work.?} 

\section{Preliminaries}\label{sec:preliminaries}

We assume that the sample space is the set of all permutations of $N$ elements, i.e. the symmetric group $S_N$; 
the underlying $\sigma$-algebra equals the power set of $S_N$. The best candidate is indexed by $N$, the second-best candidate by $N-1,\ldots$ while the worst candidate is indexed by $1$. We use both the term postdoc and second-best candidate to refer to the element indexed by $N-1$. It is assumed that the committee can accurately compare the candidates presented, but not the candidates unseen at the given point of the decision making process.

Unlike standard approaches for the postdoc problem, we assume that the candidates are presented according to a permutation (order) dictated by the Mallows distribution $\mathcal{M}_{\theta}$, parametrized by a real number $\theta>0$. The probability of presenting a permutation $\pi \in S_N$ to the postdoc hiring committee equals 
$$f(\pi) = \frac{\theta^{c(\pi)}}{\sum\limits_{\pi \in S_N} \theta^{c(\pi)}},$$
where $c: S_N \to \mathbb{N}$ is a permutation statistic equal to the smallest number of adjacent transpositions needed to transform $\pi$ into the identity permutation $[12\,\ldots \,N]$ (or equivalently, equal to the number of pairwise element inversions). This inversion count is known under the name \emph{Kendall} distance between the permutation $\pi$ and the identity permutation $[12\,\ldots \,N]$)\footnote{The Kendall distance is more frequently referred to as the Kendall $\tau$ distance. Since we make frequent use of the symbol $\tau$ to denote permutations and their prefixes we use the name Kendall instead of Kendall $\tau$.}. Note that the notation for a permutation in square bracket form should not be confused with the notation for a set $[a,b]=\{{a,a+1,\ldots,b\}},\, b\geq a,$ and the meaning will be clear from the context. 
%\textcolor{green}{shall we mention the Kendall distance can be counted by the number of inversions not necessarily adjacent transpositions?; shall we mention that this definition of probability mass function actually works for any statistic c.}

For a given permutation $\pi \in S_N$ drawn according to the Mallows model, we say that a strategy \emph{wins the game} if it correctly identifies the second-best candidate when presented with $\pi$. The next definitions are based on the work of Jones~\cite{jones2020weighted}.
%\textcolor{red}{I suggest to completely drop Evans and do Mallows right away.} 
%We work on probability distributions obtained by adding weight for each permutation according to a statistic $c: S_N \to \mathbb{N}$ and a positive real number $\theta$ via $$f(\pi) = \frac{\theta^{c(\pi)}}{\sum\limits_{\pi \in \mathcal{\sigma}_N} \theta^{c(\pi)}}$$

\begin{defn}
Given a $\pi \in S_N$, the $k^{(th)}$ prefix of $\pi$, denoted by $\pi|_k,$ is a permutation in $S_k$ that represents the relabelling of the first $k$ elements of $\pi$ according to their relative order, from smallest to largest. A {\em proper prefix} of $\pi$ is a prefix of $\pi$ with length $<|\pi|$. For example, for $\pi=[165243] \in S_6$ and $k=4$, we have $\pi|_4=[1432]$.
\end{defn}

\begin{defn}
A {\em strike set} is a list of prefixes of possibly different lengths that immediately trigger an acceptance decision for the last candidate observed. In other words, a strike set $A \subset \cup_{i=1}^{N} S_i$ corresponds to a collection of permutations $B \subset S_N$ such that for each $\sigma \in A$ with $|\sigma| = k$ we include in $B$ all permutations $\tau$ such that the $k^{(th)}$ prefix of $\tau$ equals $\sigma$; when the permutation $\tau \in S_N$ is presented, we choose to accept the $k^{(th)}$ position of $\tau$ since we see $\sigma$ when there are exactly $k$ candidates already showed up. Note that \emph{any strategy} can be represented by a strike set. During the game, if the prefix we have seen so far is not in the strike set which describes the winning strategy, then we reject the current candidate and continue.%\textcolor{red}{add small example}
\end{defn}

\begin{defn}
Let $\sigma \in \cup_{i=1}^{N} S_i$ and assume that the length of the permutation equals $|\sigma| = k$. We say that a $\pi \in S_N$ is $\sigma$-prefixed if $\pi|_k = \sigma$. For example, $\pi=[165243] \in S_6$ is $\sigma=[1432]$-prefixed. Given that $\pi$ is $\sigma$-prefixed, we say that $\pi$ is $\sigma$-winnable if accepting the prefix $\sigma$, i.e. if accepting the $|\sigma|^{\text{th}}$ candidate when $\sigma$ is encountered identifies the second-best candidate (i.e., \emph{wins the game}) with interview ordering $\pi$. More precisely, for $\sigma = [\sigma_1\sigma_2 \ldots \sigma_k]$, we have that $\pi$ is $\sigma$-winnable if $\pi$ is $\sigma$-prefixed and $\pi_k = N-1$.
\end{defn}

Strike sets are key to determining the optimal strategy and the largest possible probability of winning the game, as described in Theorem~\ref{winningprob}. Two other important concepts in our analysis are three conditional probabilities of winning the game based on the type of prefix encountered, defined below, and the notion of a prefix equivalent statistic (which includes the Kendall statistic). 

\begin{defn}
We say a prefix $\sigma$ is {\em eligible} if either a) it ends in a left-to-right maxima (Type I); or b) it ends in a left-to-right second maxima (Type II) or c) it has length $N$.
\end{defn}

\begin{defn}
A strike set is {\em valid} if it \\
1) Consists of prefixes that are eligible, and\\
2) It has no pair of elements such that one contains the other as prefix (i.e., the strike set is minimal), and\\
3) Every permutation $\pi \in S_N$ contains some element of the strike set as a prefix (i.e., one can always make a selection).
\end{defn}
An optimal strategy for identifying the global second-best candidate is represented by a valid strike set.
%\textcolor{blue}{Maybe we do not need definition of eligible prefix?}

\begin{defn}
Let $\sigma$ be a permutation of length $k \le N$. We define {\em the standard denominator} $SD(\sigma)$ of $\sigma$ according to 
$$SD(\sigma)=\sum\limits_{\sigma\text{-prefixed } \pi \in S_N} \theta^{c(\pi)}.$$
\end{defn}
Throughout the remainder of the paper we also use $\bigoplus$ for the operator defined as $\frac{a}{b} \bigoplus \frac{c}{d} = \frac{a+c}{b+d}$. Using the standard denominator with the $\bigoplus$ operator allows for simplifying all pertinent explanations as one can only focus on the numerators of fractions. When a probability is written as a fraction, we view the numerator as ``the cardinality of an event'' and the denominator as ``the cardinality of the sample space'' and thus we do not cancel out their greatest common divisor to simplify the expression until the final stages of the proof. %\textcolor{red}{When the $\bigoplus$ operator is applied on two fractions, it can be viewed as a combination of two disjoint events as well as their sample spaces.}   

\begin{defn}
For a prefix $\sigma$ of length $k$ such that $1 \le k \le N$, define
\begin{align}
Q(\sigma) &= P[\text{win the game with the strategy accepting }\sigma \text{ }|\text{ }\pi \text{ is }\sigma\text{-prefixed}],\\ \notag
Q^o(\sigma) &= P[\text{win with the best strategy available after rejecting }\sigma \text{ }|\text{ }\pi \text{ is }\sigma\text{-prefixed}],\\ \notag
\bar{Q}(\sigma) &= P[\text{win with the best strategy available after rejecting}\sigma|_{k-1} \text{ }|\text{ }\pi \text{ is }\sigma\text{-prefixed}]. \notag
\end{align}
\end{defn}

Based on the previous definitions, it is clear that 
\begin{equation}\label{basicq0}
Q(\sigma) = \frac{\sum\limits_{\sigma\text{-winnable }\pi \in S_N} \theta^{c(\pi)}}{SD(\sigma)} \quad \text{ and } \quad \bar{Q}(\sigma) = \max(Q(\sigma), Q^o(\sigma)).    
\end{equation}

Intuitively, the probability $Q$ measures the chance of winning by accepting the current candidate while $Q^o$ measures the best chance to win by selecting a future candidate.

\begin{defn}\label{children}
For each $\sigma \in S_{\ell-1}$, where $\ell \le N$, we define $\sigma_j$, $1 \le j \le \ell$, to be the $\sigma$-prefixed permutation of length $\ell$ such that its last position has value $j$ after relabelling according to the first $\ell-1$ positions of $\sigma$. For example, for $\sigma = [123],$ a permutation of length $3$, we have $\sigma_1 = [2341], \sigma_2 = [1342], \sigma_3 = [1243]$ and $\sigma_4 = [1234]$.
\end{defn}

Next, let $1 \le |\sigma| = k \le N-1$. Then $Q^o(\sigma)$ represents a fraction with denominator $SD(\sigma)$ and numerator 
equal to the sum of $\theta^{c(\pi)}$ over all $\sigma$-prefixed permutations $\pi$ such that the second-best candidate (indexed by $N-1$) in $\pi$ can be selected using an optimal strategy after rejecting the $|\sigma|^{\text{th}}$ candidate. Thus,  
\begin{equation}\label{basicq0o}
Q^o(\sigma) = \bar{Q}(\sigma_1) \bigoplus \ldots \bigoplus \bar{Q}(\sigma_{k+1}) \quad \text{ and } \quad SD(\sigma) = \sum\limits_{j = 1}^{k+1} SD(\sigma_{j}).
\end{equation}

\begin{defn}
We call a prefix $\sigma$ {\em positive} if $Q(\sigma) \ge Q^o(\sigma)$ and {\em negative} otherwise. In words, a prefix $\sigma$ of length $k$ is positive if the probability of winning by accepting $\sigma_k$ is greater than or equal to the probability of winning after deciding to reject $\sigma_k$. We call a prefix $\sigma$ {\em strictly positive} if $Q(\sigma) > Q^o(\sigma)$. \end{defn}

\begin{proposition}\label{q0probs}
Let $\tau$ be any permutation of length at most $N$. The probabilities $Q^o(\tau), Q(\tau)$, and $\bar{Q}(\tau)$ can be pre-calculated using a sequential procedure.
\end{proposition}

\begin{proof}
We first observe that the prefixes of length $N$ are positive, which serves as a base case for induction on the length of a prefix. More precisely, for a permutation $\tau$ of length $N$, if $\tau(N) = N-1$ then $Q(\tau) = \bar{Q}(\tau) = 1$ and $Q^o(\tau) = 0$; if $\tau(N) \le N-2$ or $\tau(N) = N$  then $Q(\tau) = Q^o(\tau) = \bar{Q}(\tau) = 0$.

Assume that the probabilities $Q, Q^o, \bar{Q}$ for permutations of length longer than $k$, $1 \le k \le N-1$, are already known. We show that $Q^o(\tau), Q(\tau)$, and $\bar{Q}(\tau)$ can be pre-calculated, where now $\tau$ is a permutation of length $k$. By~\eqref{basicq0}, we know the value of $Q(\tau)$; the probability $Q^o(\tau)$ can be obtained from $Q^o(\tau) = \bigoplus\limits_{j = 1}^{k+1} \bar{Q}(\tau_j)$, since each $\tau_j$ has length larger than that of $\tau$; the $\bar{Q}(\omega)$ probabilities can be determined from $\bar{Q}(\tau) = \max\{Q(\tau), Q^o(\tau)\}$. 
\end{proof}

Note that this it is not the most efficient way for computing the probabilities. Lemma~\ref{relations} describes another way of computing the probabilities $Q, Q^o$ of Type I and Type II permutations of length $k,$ using the probabilities $Q, Q^o, \bar{Q}$ of Type I and Type II permutations of length $k+1$. The probabilities $Q$ equal to $0$ for prefixes that are neither Type I nor Type II. Moreover, we describe an optimal strategy in Section~\ref{sec:strategies} and show in Section~\ref{sec:mallows} how to find the maximum probability of winning through our optimal strategy using well-defined recurrence relations.

Recall that by~\eqref{basicq0}, $Q(\sigma)$ can be written as a fraction with denominator $SD(\sigma)$ and the numerator equal to the sum of $\theta^{c(\pi)}$ over all $\pi$ that are $\sigma$-winnable. Next, we show in Proposition~\ref{expansion-0} that $Q^o(\sigma)$ can be expressed in a similar manner.
\begin{proposition}\label{expansion-0}
Let $\sigma$ be a permutation of length $\ell-1$ with $\ell \le N$. There is a collection of $\sigma$-prefixed permutations $A^{\sigma}$ such that each $\mu \in A^{\sigma}$ is of length larger than $|\sigma|$ and positive, and
$$Q^o(\sigma) = \bigoplus\limits_{\mu \in A^{\sigma}} Q(\mu).$$
Moreover, the above expression is equivalent to
\begin{equation}\label{expansion-sd}
Q^o(\sigma) \cdot SD(\sigma) = \sum\limits_{\mu \in A^{\sigma}} Q(\mu) \cdot SD(\mu) \quad \text{ and } \quad SD(\sigma) = \sum\limits_{\mu \in A^{\sigma}} SD(\mu).
\end{equation}
\end{proposition}

\begin{proof}
By~\eqref{basicq0o}, we know that $Q^o(\sigma) = \bar{Q}(\sigma_1) \bigoplus \bar{Q}(\sigma_2) \bigoplus \ldots \bigoplus \bar{Q}(\sigma_{\ell})$ holds. We now describe an algorithm that establishes the proof of the proposition. 

\begin{itemize}
\item[Initialization step:] Let $A^{\sigma} = \emptyset$ and $B = \{\sigma_1, \ldots, \sigma_{\ell}\}$. 

We repeat the Main step below until the process terminates.

\item[Main step:] Check if $B = \emptyset$. If true, then stop and return the set $A^{\sigma}$; if not, then do the following: Pick a $\phi \in B$, say of length $q$ with $|\sigma| < q \le N$, check if $\phi$ is both eligible and $Q(\phi) \ge Q^o(\phi)$ holds. If true, then set $A^{\sigma} = A^{\sigma} \cup \phi$ and $B = B - \phi$; if not, then do not update $A^{\sigma}$ and let $B = B \cup \bigcup\limits_{j = 1}^{q+1} \phi_j$.
\end{itemize}

Since the permutations of length $N$ are positive, the algorithm will terminate. The Main step of the algorithm will produce a set $A^{\sigma}$ of positive eligible permutations that are also minimal. At the end of the process, $B$ is an empty set. To see this, we make the following two observations.

\textbf{Observation (i):} There is no pair of elements $\alpha,\beta \in A^{\sigma}$ such that $\alpha$ is a prefix of $\beta$, i.e., $A^{\sigma}$ contains minimal prefixes only, since otherwise the forest $T^o(\alpha)$ will not be processed by the algorithm and it will be impossible for $\beta$ to be selected for inclusion in $A^{\sigma}$.

\textbf{Observation (ii):} Since we choose a prefix only if it is positive and eligible, every prefix in $A^{\sigma}$ is positive and eligible. 

Therefore, we can write $A^{\sigma} = \{\mu_1, \ldots, \mu_r\}$ where each of the $\mu \in A^{\sigma}$ has length larger than $|\sigma|$. Furthermore, by the Main step of the algorithm,
\begin{equation}\label{expansion-0-2}
Q^o(\sigma) = Q(\mu_1) \bigoplus Q(\mu_2) \bigoplus \cdots \bigoplus Q(\mu_r).
\end{equation}
Moreover, by~\eqref{expansion-0-2}, $Q^o(\sigma)$ can be expressed as a fraction where the numerator is the sum of $\theta^{c(\pi)}$ over all $\sigma$-prefixed permutations $\pi$ whose best candidate can be captured by an optimal strategy after rejecting the $|\sigma|^{\text{th}}$ candidate, i.e., the collection of $\mu_1$-winnable, $\mu_2$-winnable, $\ldots$, $\mu_r$-winnable permutations in $S_N$. The denominator is the standard denominator, i.e., $\sum\limits_{\sigma\text{-prefixed }\pi \in S_N} \theta^{c(\pi)}$. %By Equation~\eqref{qqoqbar}, ~\eqref{expansion-2}, $Q_i(\sigma)$
\end{proof}

In Lemma~\ref{mainlemma1} and~\ref{mainlemma2}, we show that the probabilities $Q, Q^o$ of a prefix $\sigma$ only depend on its length and the relative order of the last position in $\sigma$. In Lemma~\ref{relations}, we describe the relations between the probabilities $Q, Q^o$ of a few relevant prefixes of consecutive lengths, which are used to derive Theorem~\ref{type2} and~\ref{type1} and describe the winning strategy for any prefix equivalent statistic.

\begin{defn}
Let $\bar{T}(\sigma)$ be the subtree rooted at $\sigma$, i.e., the tree comprising $\sigma$ and its children and let $T^o(\sigma) = \bar{T}(\sigma) - \sigma$ be the subforest obtained by deleting $\sigma$ from the graph $\bar{T}(\sigma)$.
\end{defn}

Since the set of all prefixes $\bigcup\limits_{i = 1}^{N} S_i$ also represents all possible positions in the game, we follow the approach suggested in~\cite{jones2020weighted} for the secretary problem and make use of {\em prefix trees} which naturally capture relations between all prefixes of a permutation. A prefix tree for the game of second-best choice with $N$ candidates is a partially ordered set defined on $\bigcup\limits_{i = 1}^{N} S_i$, where $\alpha < \beta$ if and only if $\alpha$ is a prefix of $\beta$ (see Figure~\ref{tree} for a prefix tree that represents the game with four candidates).

The following theorem establishes that there exists a valid strike set such that its corresponding strategy is optimal. The algorithm described in the proof also suggests a way to compute the optimal probability of winning the postdoc game. Note that the optimal strategy may not be unique and that each optimal strategy corresponds to a valid strike set. 
\begin{figure}
\begin{center}
  \includegraphics[scale=0.65]{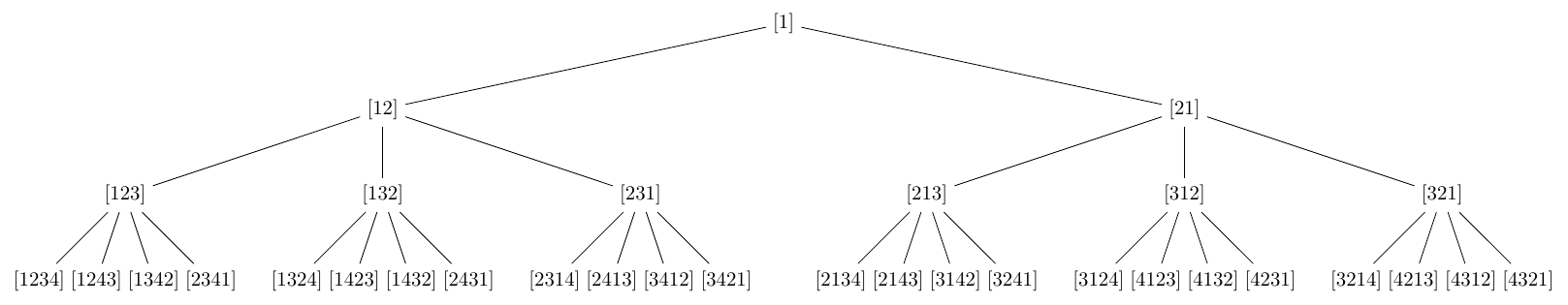}
  \caption{A prefix tree for the game of second-best choice with four candidates.}\label{tree}
\end{center}
\end{figure}

\begin{theorem}\label{winningprob}
The maximum probability of correctly identifying the second-best candidate equals
$$\bigoplus\limits_{\sigma \in A} Q(\sigma) = \frac{\sum\limits_{\sigma \in A} Q(\sigma) \cdot SD(\sigma)}{\sum\limits_{\sigma \in A} SD(\sigma)} = \frac{\sum\limits_{\sigma \in A} Q(\sigma) \cdot SD(\sigma)}{\sum\limits_{\pi \in S_N} \theta^{c(\pi)}},$$ 
where $A$ is a valid strike set with all elements positive. %\textcolor{blue}{To avoid ambiguity, we would like to mention again that the} denominator of the $Q(\sigma)$ probability terms is taken to be \textcolor{blue}{$SD(\sigma)$ and we do not cancel out the common divisor for the numerator and denominator of a fraction when it represents a $Q$ probability.}
\end{theorem}

\begin{proof}
The maximum probability of winning equals $\bar{Q}([1])$, where $[1]$ is a permutation of length $1$. By Proposition~\ref{expansion-0}, the theorem holds true.
\end{proof}

Note that for a given valid strike set $A$, its corresponding strategy is to accept the candidate if the permutation (prefix) up to that point belongs to $A$. In the other direction, given a strategy, one can easily determine the corresponding strike set.
%%%%%%%%%%%%%%%%%%%%%%%%%%%%%%%%%%%%%%%%%%%%%%%%%%%%%%%%%%%%%%%%%%%%%%%%%%%

\begin{remark}
In order to identify a strategy that maximizes the probability of winning, we can actually choose to either include a permutation $\sigma$ in the set $A$ described in Theorem~\ref{winningprob} or exclude it when $Q(\sigma) = Q^o(\sigma)$. This is also the reason why an optimal strategy may not be unique.
\end{remark}

%In order to compute the probabilities $Q^o(\sigma), Q(\sigma)$, and $\bar{Q}(\sigma)$ \textcolor{blue}{of an arbitrary permutation $\sigma \in \bigcup\limits_{i=1}^{N} S_i$}, we first observe that the prefixes of length $N$ are positive, which serves as a base case for induction on the length of a prefix. \replaced{Also, only $Q([12 \cdots (N-2)N(N-1)]) = \bar{Q}([12 \cdots (N-2)N(N-1)]) = 1$ and all other $Q, Q^o, \bar{Q}$ probabilities of permutations of length $N$ are $0$.}{Also, only $Q([12 \cdots (k-2)k(k-1)]) = \bar{Q}([12 \cdots (k-2)k(k-1)]) = 1$, all other values equal to $0$.}

%Given the probabilities $Q, Q^o, \bar{Q}$ for prefixes of length longer than $i$, the probabilities $Q^o(\sigma)$ for each prefix of length $i$ can be obtained from $\bigoplus_{\sigma\text{-prefixed } \omega \text{ of size }i+1} \bar{Q}(\omega)$, while the $\bar{Q}(\omega)$ probabilities can be determined from the definition in terms of the max-formula. Furthermore, the probabilities $Q$ can be computed from the definition 
%$$Q(\sigma) = \frac{\sum\limits_{\sigma\text{-winnable }\pi \in S_N} \theta^{c(\pi)}}{SD(\sigma)}.$$ 

\begin{example}\label{example1}
We execute the steps of the algorithm described in Proposition~\ref{expansion-0} to find an optimal strategy and maximum probability of winning, i.e., $\bar{Q}([1])$, when $N=4$ and $\theta = 1$. We write the probabilities $(Q(\sigma), Q^o(\sigma))$ for each prefix $\sigma \in \bigcup\limits_{i = 1}^{4} S_i$ (See Figure~\ref{tree-2}). The probability of winning is $\bar{Q}([1]) = \frac{8}{24}$ which is obtained for the strike set $A = \{[12], [213], [312], [4213]\}$ (boxed in Figure~\ref{tree-2}). The strategy is: pick the first left-to-right maximum after position $1$ or the first left-to-right second-maximum after position $2$, whichever comes first; if no decisions are made before the last position, accept the corresponding candidate. Observe that there is more than one optimum strategy when $\theta = 1$; another optimum strategy is to reject the first two candidates and then accept the first second-maximum thereafter; the corresponding strike set is circled in Figure~\ref{tree-2}. 

\begin{enumerate}
\item We first compare $Q([1])$ with $Q^o([1])$. Since $Q^o([1]) = \frac{8}{24} > \frac{6}{24} = Q([1])$, let $A = \emptyset$ and $B = \{[12], [21]\}$.

\item Since $Q([12]) = \frac{4}{12} \ge \frac{4}{12} = Q^o([12])$, we have $A = \{[12]\}$ and $B = \{[21]\}$. Next we compare $Q([21])$ with $Q^o([21])$ and obtain $A = \{[12]\}$ and $B = \{[213], [312], [321]\}$.

\item We compare the prefixes in $B$. At the end of Step 3 we obtain $A = \{[12], [213], [312]\}$ and $B = \{[3214], [4213], [4312], [4321]\}$.

\item We once more compare the prefixes in $B$. The final lists are $A = \{[12], [213], [312], [4213], [3214], [4312], [4321]\}$ and $B = \emptyset$.
\end{enumerate}
\vspace{-0.1in}
\end{example}

\begin{figure*}
\begin{center}
  \includegraphics[scale=0.68]{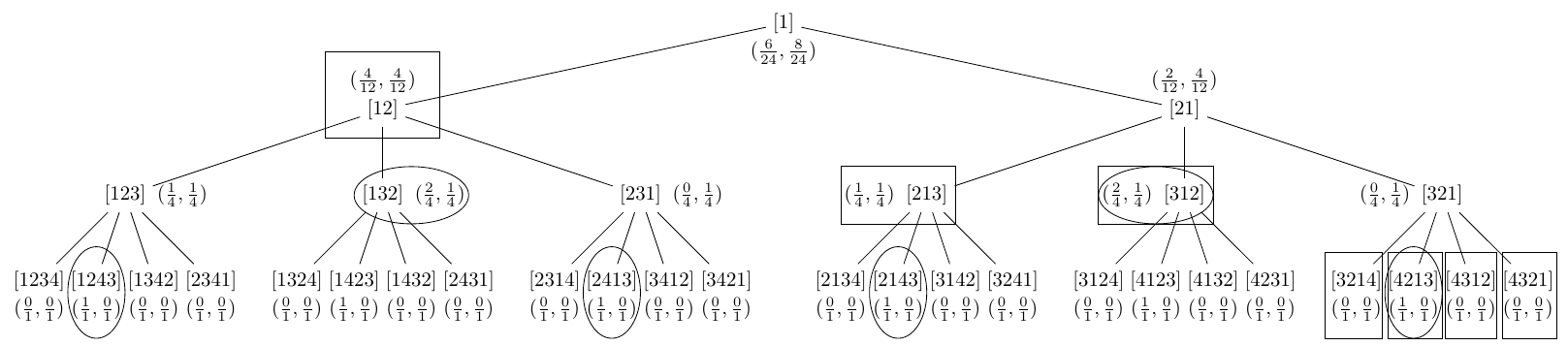}
  \caption{A prefix tree, the $Q$, and $Q^o$ probabilities for the game of second-best choice with four candidates.}\label{tree-2}
\end{center}
\vspace{-7mm}
\end{figure*}
%%%%%%%%%%%%%%%%%%%%%%%%%%%%%%%%%%%%%%%%%%%%%%%%%%%%%%%%%%%%%%%%%%%%%%%%%%%%%%%%%%%%%%%%%%%%%%%%%%%%%%%%%%%%%%%%%%%%%%%%%%%

\begin{defn}
Let $\sigma = [12 \cdots k]$ and let $g_\tau$ be a permutation operator that rearranges the elements in the permutation $\sigma$ to produce another permutation $\tau$ of length $k$. We extend the action of this operator to $\bar{T}(\sigma)$, say $\pi \in \bar{T}(\sigma)$, by similarly permuting the first $k$ entries and fixing the last $m-k$ entries of $\pi$, where $m$ is the length of $\pi \in \bar{T}(\sigma)$. 
\end{defn}
Note that the operator $g_\tau$ is a bijection from $\bar{T}(\sigma)$ to $\bar{T}(\tau)$.

\begin{example}
Let $N = 6$, $\tau = [132]$, and $\pi = [245361]$. Clearly, $\pi$ is $[123]$-prefixed and $g_{\tau} \cdot \pi =  [254361]$.
\end{example}

\begin{defn}
A statistic $c$ is {\em prefix equivalent} if it satisfies $c(\pi) - c(g_\tau \cdot \pi) = c(12 \cdots k) - c(\tau)$ for all prefixes $\tau$ and all $\pi \in \bar{T}([12 \cdots k])$, where $k$ is the length of $\tau$.  %\textcolor{red}{explain in words what prefix equivalent really means, why it should be useful and add an example}
\end{defn}
%Note that the Kendall statistic is prefix equivalent.

Intuitively, the condition $c(\pi) - c(g_\tau \cdot \pi) = c([12 \cdots k]) - c(\tau)$ requires the statistic $c$ to have the property that permuting the first $k$ entries does not create or remove any structure that is counted by the statistic $c$, and which lies beyond entry $k$. The condition ensures many useful properties for the probabilities $Q, Q^o, \bar{Q}$, including invariance under local changes (say, permuting the elements in a prefix). Prefix equivalence will be used intensively in the proofs of the theorems and lemmas to follow in this section. Before proceeding with the description of the more complicated results, we prove in Lemma~\ref{prefixequivalent} that the Kendall statistic is prefix equivalent.

\begin{lemma}\label{prefixequivalent}
The Kendall statistic is prefix equivalent. 
\end{lemma} 

\begin{proof}
Note that the Kendall statistic counts the number of inversions in a permutation $\pi$. Permuting the first $k$ entries will not influence any inversion involving elements in positions in $\{k+1, \ldots, N\}$ and an inversion between an entry at a position at most $k$ and another entry in a position following $k$ remains an inversion as the relative order of the two sets of elements is unchanged.
\end{proof}

In Lemma~\ref{mainlemma1}, we prove that the probabilities $Q$ of permutations only depend on the length of the permutations and the value seen at their last position; see Figure~\ref{tree-2} for an example.
%%%%%%%%%%%%%%%%%%%%%%%%%%%%%%%%%%%%%%%%%%%%%%%%%%here

\begin{lemma}\label{mainlemma1}
Let $c$ be a prefix equivalent statistic (including the Kendall statistic).

1) For all prefixes $\tau$ of length $k$, the $Q$ probabilities are preserved under the restricted bijection $g_\tau: T^o([12 \cdots k]) \to T^o(\tau)$.

2) If $\tau$ is Type I eligible, then $Q([12 \cdots k]) = Q(\tau)$.

3) If $\tau$ is Type II eligible, then $Q([12 \cdots (k-2)k(k-1)]) = Q(\tau)$.
\end{lemma}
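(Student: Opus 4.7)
The plan is to exploit the prefix equivalence of $c$ to upgrade $g_\tau$ into a weight-scaling bijection, and then verify winnability preservation in each of the three parts. The key preliminary computation, which I would record at the outset, is that for any $\pi \in \bar{T}([12\cdots k])$, prefix equivalence gives
\[
\theta^{c(\pi)} \;=\; \theta^{\,c([12\cdots k]) - c(\tau)} \cdot \theta^{c(g_\tau\cdot\pi)},
\]
so $g_\tau$ rescales every Mallows weight by the same constant. Consequently, for any subset $A \subseteq \bar{T}([12\cdots k])$ that bijects onto $g_\tau(A) \subseteq \bar{T}(\tau)$, the weighted sums $\sum_A \theta^{c(\pi)}$ and $\sum_{g_\tau(A)}\theta^{c(\pi)}$ differ by the same factor. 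Hence any ratio of such sums is invariant, and each of the three claims reduces to exhibiting the right bijection and checking that it transports winnable to winnable.

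For Part 1, I would fix a strict descendant $\sigma \in T^o([12\cdots k])$ of length $m > k$ and let $\sigma' = g_\tau\sigma \in T^o(\tau)$. Because $g_\tau$ touches only the first $k$ entries, the value $\pi_m$ is unchanged, so $\pi_m = N-1$ iff $(g_\tau\pi)_m = N-1$. This identifies $\sigma$-winnable with $\sigma'$-winnable permutations in $S_N$, and the weight-rescaling observation gives $Q(\sigma) = Q(\sigma')$. For $Q^o$, I would induct downward on $N-m$ starting from length $N$ (where the statement is vacuous), using the recursion $Q^o(\sigma) = \bigoplus_{\sigma\text{-prefixed }\omega\text{ of length }m+1}\bar Q(\omega)$ and the fact that $g_\tau$ biject children of $\sigma$ with children of $\sigma'$ to propagate the equality upward.

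For Parts 2 and 3, I would apply the same type of bijection but at the root of the subtree, where $g_\tau$ does reshuffle $\pi_k$. The idea is to match the positional role of $\pi_k$ with the type of prefix. In Part 2, both $[12\cdots k]$ and the type I prefix $\tau$ have their maximum at position $k$, so $\pi_k$ and $(g_\tau\pi)_k$ each equal the maximum of the common multiset of first $k$ values; hence the condition $\pi_k = N-1$ transfers across $g_\tau$. In Part 3, I would use the composite $g_\tau\circ g_{[12\cdots(k-2)k(k-1)]}^{-1}: \bar{T}([12\cdots(k-2)k(k-1)]) \to \bar{T}(\tau)$, whose weight-scaling constant is again governed by prefix equivalence; both $[12\cdots(k-2)k(k-1)]$ and the type II $\tau$ have their second-largest element at position $k$, so the same reasoning shows $\pi_k = N-1$ iff the image satisfies this property, and the ratios defining $Q$ are equal.

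I expect the main obstacle, though modest, to be cleanly handling the $Q^o$ portion of Part 1: one has to argue that the best-future strategies on the two subforests correspond, which I would do inductively through the $\bigoplus$-recursion above rather than by tracking strike sets directly. Everything else is essentially bookkeeping determined by whether the prefix ends in its maximum or its second-maximum.
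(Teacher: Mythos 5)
Your proof is correct and follows essentially the same route as the paper: prefix equivalence makes $g_\tau$ a constant-factor rescaling of the Mallows weights, so the $Q$ ratios are preserved once winnability is seen to transfer across the bijection. The only differences are cosmetic --- the paper obtains parts 2) and 3) by applying part 1) one level up (to $[12\cdots(k-1)]$ and $\tau|_{k-1}$, under which the canonical type I/II prefix maps exactly to $\tau$) rather than arguing directly at the root via the max/second-max position, and your inductive treatment of $Q^o$ is not needed for this lemma (it is the content of the following lemma, where the paper proves it exactly as you sketch).
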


\begin{proof}
1) Let $\tau$ be a prefix of length $k$ and let $\sigma \in T^o ([12 \cdots k])$ be of length $m$. Then, since  $c(\pi) - c(g_\tau \cdot \pi) = c([12 \cdots k]) - c(\tau)$ for all $\pi \in T^o([12 \cdots k])$, we have 
\begin{align}
Q(g_\tau \cdot \sigma) &= \frac{\sum_{g_\tau \cdot \sigma \textbf{-winnable } \pi \in S_N} \theta ^{c(\pi)}}{\sum_{g_\tau \cdot \sigma \textbf{-prefixed } \pi \in S_N}\theta ^{c(\pi)}} = \frac{\sum_{ \sigma \textbf{-winnable } \pi \in S_N} \theta ^{c(g_\tau \cdot \pi)}}{\sum_{ \sigma \textbf{-prefixed } \pi \in S_N}\theta ^{c(g_\tau \cdot \pi)}} \\ \notag
&= \frac{\sum_{ \sigma \textbf{-winnable } \pi \in S_N} \theta ^{c(\pi)}}{\sum_{ \sigma \textbf{-prefixed } \pi \in S_N}\theta ^{c(\pi)}} \cdot \frac{\theta ^ {c(\tau) - c([12 \cdots k])}}{\theta^{c(\tau) - c([12 \cdots k])}} = Q(\sigma). \notag
\end{align}

2) Let $\tau$ be a Type I eligible prefix of the same length as $\sigma$. Applying 1) of this theorem to $[12 \cdots (k-1)]$ and the prefix $\tau|_{k-1}$ of $\tau$ produces the claimed result.

3) Let $\tau$ be a Type II eligible prefix of the same length as $\sigma$. Applying 1) of this theorem to $[12 \cdots (k-1)]$ and the prefix $\tau|_{k-1}$ of $\tau$ produces the claimed result.
\end{proof}
%\textcolor{red}{spell out in words what the finding means.}
%\textcolor{blue}{maybe add an example for N=4?}

In Lemma~\ref{mainlemma2}, we prove that the probabilities $Q^o$ only depend on the length of the underlying permutations and do not depend on the value at the last position; see Figure~\ref{tree-2} for an example.

\begin{lemma}\label{mainlemma2}
The following claims hold true for a prefix $\tau$ of length $k$ (in Claim 1), 2) and 3)). 

1) For $\sigma \in T^o([12 \cdots k])$, the probabilities $Q^o(\sigma)$ are preserved by $g_\tau$;

2) For $\sigma \in T^o([12 \cdots k])$, the probabilities $\bar{Q}(\sigma)$ are preserved by $g_\tau$;

3) One has 
$$Q^o([12 \cdots k]) = Q^o(\tau);$$ 
If $\tau$ is of Type I, then
$$\bar{Q}([12 \cdots (k-1)k]) = \bar{Q}(\tau);$$ 
If $\tau$ is of Type II, then
$$\bar{Q}([12 \cdots (k-2)k(k-1)]) = \bar{Q}(\tau).$$ 

%\textcolor{red}{in case i missed it elsewhere: we are not consistent with the notation for a permutation, either we are going to use [ ] everywhere or not use it all; i fixed it here in the second claim, but we have both throughout the paper.} \textcolor{blue}{changed.}

4) If $\sigma_1$ and $\sigma_2$ are permutations with the same length, then $Q^o(\sigma_1) = Q^o(\sigma_2)$ and their $Q, \bar{Q}$ probabilities are equal, provided that they agree in the last position.
\end{lemma}

\begin{proof} The proofs of the claims in the lemma follow from straightforward algebraic manipulations.

1) If $\sigma$ has length $N$ then $Q^o(\sigma) = 0 = Q^o(g_{\tau} \cdot \sigma)$ since every permutation of length $N$ has the $Q^o$ probability equal to $0$. Thus, we may assume that $\sigma$ has length less than $N$. We prove the statement by induction on the length of $\sigma$. Assume the statement works for all $\sigma' \in T^o([12 \cdots k])$ of length at least $m+1$, where $k+1 \le m \le N-1$ and we will show the statement for $\sigma$ of length $m$.
%Let $\sigma \in T^o([12 \cdots k])$. We first show for each $\alpha \in T^o(\sigma)$, $Q^o(\alpha) = Q^o(g_{\tau} \cdot \alpha)$. When $k = N-1$ and $\sigma \in T^o([12 \cdots k])$, we know $\sigma$ has length $N$ and thus $Q^o(\sigma) = 0 = Q^o(g_{\tau} \cdot \sigma)$. We assume that Statement 1) holds for $k \ge m+1$, where $1 \le m \le N-1$, and we show Statement 1) holds for $k = m$.

By Proposition~\ref{expansion-0}, we know the probability $Q^o (\sigma)$ is a $\bigoplus$-sum of $Q$ probabilities, say 
$$Q^o(\sigma) = Q(r_1) \bigoplus Q(r_2) \bigoplus \cdots \bigoplus Q(r_n),$$ 
for some prefixes $r_i$ of length at least $|\sigma| + 1 =: m+1$. For each $\alpha \in T^o(\sigma)$, we know by induction hypothesis that $Q^o(\alpha) = Q^o(g_{\tau} \cdot \alpha)$. By Lemma~\ref{mainlemma1}, we know $Q(\alpha) = Q(g_{\tau} \cdot \alpha)$ as well. Thus, for the algorithm described in Proposition~\ref{expansion-0}, if we process $\sigma$ and end up obtaining the set $A = \{r_1, \ldots, r_n\}$ then when we process $g_{\tau} \cdot \sigma$ we will end up obtaining the set $A = \{g_{\tau} \cdot r_1, \ldots, g_{\tau} \cdot r_n\}$.

Therefore,
\begin{equation}\label{1st}
Q^o(g_\tau \cdot \sigma) = Q(g_\tau \cdot r_1) \bigoplus Q(g_\tau \cdot r_2) \bigoplus \cdots \bigoplus Q(g_\tau \cdot r_n)
\end{equation}
$$= \frac{\theta^{c(\tau) - c([12 \cdots k])}}{\theta^{c(\tau) - c([12 \cdots k])}} \cdot Q(r_1) \bigoplus \frac{\theta^{c(\tau) - c([12 \cdots k])}}{\theta^{c(\tau) - c([12 \cdots k])}} \cdot Q(r_2) \bigoplus \cdots \bigoplus \frac{\theta^{c(\tau) - c([12 \cdots k])}}{\theta^{c(\tau) - c([12 \cdots k])}} \cdot Q(r_n) = Q^o(\sigma).$$ 
%As pointed out without a proof in~\cite{jones2020weighted},~\eqref{1st} holds true. To make our exposition self-contained, we provide a rigorous proof for the following claim to finish the proof of . 

2) The result follows from 1), Lemma~\ref{mainlemma1}, and $\bar{Q}(\sigma) = \max(Q^o(\sigma), Q(\sigma))$.

3) Let $\kappa_i$ be $[12 \cdots k]$-prefixed and of length $k+1$ such that the last position has relative value $i$, where $1 \le i \le k+1$. Similarly, let $\tau_i$ be $\tau$-prefixed and of length $k+1$ such that the last position has relative value $i$, where $1 \le i \le k+1$. By 2), $\bar{Q}(\kappa_{i}) = \bar{Q}(\tau_{i})$, for $1 \le i \le k+1$. Therefore, 
$$Q^o([12 \cdots k]) = \bigoplus\limits_{i = 1}^{k+1} \bar{Q}(\kappa_i) = \bigoplus\limits_{i = 1}^{k+1} \bar{Q}(\tau_i) = Q^o(\tau).$$ 
This establishes the correctness of the first part of the claim.

%Similar arguments used to prove 1) establish that the first result of 3) is true. 
From $\bar{Q}(\sigma) = \max(Q^o(\sigma), Q(\sigma))$, Lemma~\ref{mainlemma1}, and the first part of Statement 3), we know the second and third part of 3) hold true as well.

4) $Q^o(\sigma_1) = Q^o(\sigma_2)$ follows from 3). Now, assume that $|\sigma_1| = |\sigma_2| = q$ and that the last position of $\sigma_1$ and $\sigma_2$ takes the value $x$, where $1 \le x \le q$. Let $q' = q - 1$ and let $\sigma \in T^o([12\cdots q'])$ be such that $|\sigma| = q$ and that the last position of $\sigma$ equals $x$. Applying 1) of Lemma~\ref{mainlemma1} to the $q'$th prefix of $\sigma, \sigma_1, \sigma_2$, we obtain $Q(\sigma_1) = Q(\sigma) = Q(\sigma_2)$ and thus $\bar{Q}(\sigma_1) =  \bar{Q}(\sigma_2)$.
\end{proof}

For the prefixes $\alpha = [12 \cdots (k-1)]$ and $\beta = [12 \cdots (k-3)(k-1)(k-2)]$, recall by Definition~\ref{children} we have 
$$\alpha_k = [12 \cdots k], \; \alpha_1 = [2 \cdots k1], \; \alpha_i = [1 \cdots (i-1)(i+1) \cdots ki], $$ 
$$\beta_k = [12 \cdots (k-3)(k-1)(k-2)k], \; \beta_{k-1} = [12 \cdots (k-3)k(k-2)(k-1)], \; \beta_{k-2} = [12 \cdots (k-3)k(k-1)(k-2)],$$ 
$$\beta_1 = [2 \cdots k1], \; \beta_i = [1 \cdots (i-1)(i+1) \cdots (k-2)k(k-1)i].$$ 

With a slight abuse of notation that leads to simplified expressions for probabilities of interest, we henceforth let 
$Q(\sigma)$, $Q^o(\sigma)$, and $\bar{Q}(\sigma)$ each stand for the numerators in their corresponding definitions, where the denominator is self-understood to be $\sum\limits_{\sigma\text{-prefixed }\pi \in S_N} \theta^{c(\pi)}$ and henceforth referred to as \emph{the standard denominator}. In subsequent proofs confined to this section, we omit the denominator whenever it agrees for all quantities of interest.
%We use previous proved lemmas to write $Q^o(\sigma')$ and $Q(\sigma')$ using the probabilities of $\tau_k'$ in~\eqref{sop'} and~\eqref{sp'} so that we can use information we have about $Q^o(k')$ and $Q(k')$ to compare $Q^o(\sigma')$ and $Q(\sigma')$. A similar reason applies to~\eqref{sop''} and~\eqref{sp''}.

In~\eqref{sop'} and~\eqref{sp'} of the lemma to follow, we express the probabilities $Q^o$ and $Q$ of a Type I prefix ($\alpha$) of length $k-1$ via the probabilities $Q^o, Q,$ and $\bar{Q}$ of a Type I prefix $\alpha_k$ of length $k$ and a Type II prefix $\alpha_{k-1}$ of length $k$. Similarly, we express the probabilities $Q^o$ and $Q$ of a Type II prefix ($\beta$) of length $k-1$ via the probabilities $Q^o, Q,$ and $\bar{Q}$ of a Type I prefix $\alpha_k$ of length $k$ and a Type II prefix $\alpha_{k-1}$ of length $k$.

%$Q^o(\sigma')$ is expressed in terms of the probabilities $\bar{Q}(\tau_k')$, $\bar{Q}(\tau_{k-1}')$, and $Q^o(\tau_k')$. In~\eqref{sp'}, $Q(\sigma')$ is expressed in terms of $Q(\tau_k')$ and $Q(\tau_{k-1}')$. The results are used to compare $Q^o(\sigma')$ and $Q(\sigma')$ \textcolor{red}{not sure what this means: in terms of probabilities of longer Type I and Type II prefixes.}
%\textcolor{blue}{relations between prefixes of different length (differ by one).}

%Similarly, Equations~\eqref{sop''} and~\eqref{sp''} are used to compare $Q^o(\sigma'')$ and $Q(\sigma'')$ \textcolor{red}{not sure what this means: in terms of probabilities of longer Type I and Type II prefixes.}

\begin{lemma}\label{relations}
We have
\begin{equation}\label{sop'}
Q^o(\alpha) = \bar{Q}(\alpha_k) + \bar{Q}(\alpha_{k-1}) + Q^o(\alpha_k) \cdot \sum\limits_{i=1}^{k-2} \theta^{c(\alpha_i) - c(\alpha_k)}, 
\end{equation} 
\begin{equation}\label{sp'}
Q(\alpha) = Q(\alpha_k) \cdot \sum\limits_{i=1}^{k-1} \theta^{c(\alpha_i) - c(\alpha_k)} + \theta ^{c(\alpha_{k}) - c(\alpha_{k-1})} \cdot Q(\alpha_{k-1}),
\end{equation}
\begin{equation}\label{sop''}
Q^o(\beta) = \bar{Q}(\alpha_k) \cdot \theta^{c(\beta_{k}) - c(\alpha_{k})}+\bar{Q}(\alpha_{k-1}) \cdot \theta^{c(\beta_{k-1}) - c(\alpha_{k-1})}+Q^o(\alpha_{k-1}) \cdot \sum\limits_{i=1}^{k-2} \theta^{c(\beta_i) - c(\alpha_{k-1})},
\end{equation}
\begin{equation}\label{sp''}
\text{ and } \hspace{1cm} Q(\beta) = Q(\alpha_{k-1}) \cdot \sum\limits_{i=1}^{k-2} \theta^{c(\beta_i) - c(\alpha_{k-1})}.
\end{equation}
\end{lemma}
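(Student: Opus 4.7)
The plan is to derive all four identities by partitioning the set of $\sigma$-prefixed permutations of length $N$ according to their length-$k$ refinement (one of $\tau'_1,\ldots,\tau'_k$ in the $\sigma'$ case, or one of $\tau''_1,\ldots,\tau''_k$ in the $\sigma''$ case) and then analyzing each piece via Lemma~\ref{mainlemma1}, Lemma~\ref{mainlemma2}, and prefix equivalence of the Kendall statistic. A recurring conversion rule is that whenever two length-$k$ prefixes $\alpha$ and $\beta$ yield equal probabilities (as originally defined), the numerator quantities $Q$, $Q^o$, or $\bar{Q}$ differ by the factor $\theta^{c(\alpha)-c(\beta)}$, because their standard denominators $\sum_{\alpha\text{-prefixed }\pi}\theta^{c(\pi)}$ and $\sum_{\beta\text{-prefixed }\pi}\theta^{c(\pi)}$ differ exactly by that factor (an immediate consequence of prefix equivalence, cf.\ Example~\ref{prefixequivalent}).

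For~\eqref{sop'} and~\eqref{sop''}, the algorithm in the proof of Theorem~\ref{winningprob} gives, in numerator form, $Q^o(\sigma')=\sum_{i=1}^{k}\bar{Q}(\tau'_i)$ and $Q^o(\sigma'')=\sum_{i=1}^{k}\bar{Q}(\tau''_i)$, since rejecting $\sigma$ leads to observing one of the $k$ length-$k$ extensions and, on each branch, the best continuation achieves $\bar{Q}$. In each sum I would isolate the type I extension ($i=k$) and the type II extension ($i=k-1$) from the remaining ineligible ones ($i\le k-2$); for those ineligible extensions $Q(\tau_i)=0$, because a last position of relative rank $i\le k-2$ cannot hold the globally second-best candidate, so $\bar{Q}(\tau_i)=Q^o(\tau_i)$. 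Applying Lemma~\ref{mainlemma2}(3) together with the conversion rule above collapses all $\bar{Q}(\tau''_k)$, $\bar{Q}(\tau''_{k-1})$, and $Q^o(\tau'_i)$ or $Q^o(\tau''_i)$ to a single reference $\bar{Q}(\tau'_k)$, $\bar{Q}(\tau'_{k-1})$, $Q^o(\tau'_k)$, or $Q^o(\tau'_{k-1})$ scaled by the appropriate $\theta$-power, yielding~\eqref{sop'} and~\eqref{sop''}.

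For~\eqref{sp'} and~\eqref{sp''}, I would again partition $\sigma$-winnable $\pi$ by their length-$k$ refinement and exhibit, for each nonempty class, a bijection to either the set of $\tau'_k$-winnable or the set of $\tau'_{k-1}$-winnable permutations, realized by permuting the first $k$ entries exactly as the $g$-operator prescribes. For~\eqref{sp'}: direct inspection of rank patterns shows that a $\sigma'$-winnable $\pi$ with extension $\tau'_k$ must have $\pi_{k-1}=N-1,\pi_k=N$ and is in bijection with the $\tau'_{k-1}$-winnable set (via swapping positions $k-1,k$), whereas each extension $\tau'_i$ with $1\le i\le k-1$ yields $\sigma'$-winnable permutations in bijection with the $\tau'_k$-winnable set via $g_{\tau'_i}\circ g_{\tau'_k}^{-1}$. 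For~\eqref{sp''}: the winnability conditions $\pi_{k-2}=N$, $\pi_{k-1}=N-1$ rule out the extensions $\tau''_k$ and $\tau''_{k-1}$ (they would require an element strictly between $N-1$ and $N$), leaving only the extensions $\tau''_i$ with $i\le k-2$, each in bijection with the $\tau'_{k-1}$-winnable set. Prefix equivalence converts each bijection into the multiplicative factor $\theta^{c(\tau)-c(\text{reference})}$ on the corresponding $Q$, and summation over $i$ delivers the stated identities.

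The main obstacle will be the rank-pattern bookkeeping in the $Q$ arguments: one must determine, for every length-$k$ extension of $\sigma'$ or $\sigma''$, exactly which global ranks the top two candidates can occupy under the winnability constraint, and then verify that the bijection between the surviving $\sigma$-winnable class and the chosen $\tau'_k$- or $\tau'_{k-1}$-winnable class is literally the action of the $g$-operator, so that prefix equivalence yields the claimed exponent on $\theta$. A subsidiary but error-prone point is that, under the abuse-of-notation convention, every same-length identity from Lemma~\ref{mainlemma2} must be rewritten as a numerator identity carrying the explicit $\theta$-ratio of standard denominators; omitting this factor is the most likely place for a slip.
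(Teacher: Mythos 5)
Your proposal is correct and follows essentially the same route as the paper: decompose over the $k$ length-$k$ children of $\sigma'$ (resp.\ $\sigma''$) in the prefix tree, note that the ineligible children satisfy $Q=0$ so $\bar{Q}=Q^o$, and use the $g$-operator bijections together with prefix equivalence to convert each child's contribution into the reference quantity times $\theta^{c(\cdot)-c(\cdot)}$, with exactly the same case analysis ruling out $\tau''_k$ and $\tau''_{k-1}$ for~\eqref{sp''} and identifying the $\tau'_k$-prefixed winnable class with the $\tau'_{k-1}$-winnable set for~\eqref{sp'}. The only nitpick is that your parenthetical reason for excluding $\tau''_k$ should read ``an element larger than $N$'' rather than ``strictly between $N-1$ and $N$'' (which is the reason for excluding $\tau''_{k-1}$), but the conclusion is unaffected.
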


\begin{proof}
There are $k$ children of $\alpha$ in the prefix tree, namely $\alpha_1, \ldots, \alpha_k$, and $k$ children of $\sigma''$ in the prefix tree, namely $\beta_1, \ldots, \beta_k$. The prefixes $\alpha_k$ and $\alpha_{k-1}$ are eligible so $\bar{Q}_{\alpha_k}$ and $\bar{Q}_{\alpha_{k-1}}$ are the optimal probabilities for the subtrees rooted at $\alpha_k$ and $\alpha_{k-1}$, respectively. The subtrees under each of the other $k-2$ children of $\sigma'$ are isomorphic to the subtree under $\alpha_k$ via the bijection $g_{\alpha_i}$. A $\pi \in S_N$ in $T^o(\alpha_k)$ wins if and only if $g_{\alpha_i} \cdot \pi$, which is in $T^o(\alpha_i)$, wins the game. Moreover, 
for each $\pi \in S_N$ that wins under $Q^o(\alpha_k)$, we have $\theta^{c(g_{\alpha_i} \cdot \pi)} = \theta^{c(\pi)} \cdot \theta^{c(\alpha_i) - c(\alpha_k)}$ since $c$  is a prefix equivalent statistic. 

As it is impossible for $\alpha_1, \ldots, \alpha_{k-2}$ to win, for~\eqref{sop'} we have: 
$$Q^o(\alpha) = \bar{Q}(\alpha_1)+ \ldots + \bar{Q}(\alpha_k) = Q^o (\alpha_1)+ \ldots + Q^o(\alpha_{k-2}) + \bar{Q}(\alpha_{k-1}) + \bar{Q}(\alpha_k)$$
$$ = \bar{Q}(\alpha_k) + \bar{Q}(\alpha_{k-1})+Q^o (\alpha_k) \cdot \sum\limits_{i=1}^{k-2} \theta^{c(\alpha_i) - c(\alpha_k)}.$$

For~\eqref{sp'}, note that a $\alpha$-winnable permutation can be $\alpha_i$-prefixed, $1 \le i \le k$. A $\alpha_i$-prefixed $\sigma'$-winnable permutation, where $1 \le i \le k-1$, can arise by applying $g_{\alpha_i}$, $i = 1, \ldots, k-1$, to a $\alpha_k$-winnable permutation $\pi$ (in words, $g_{\alpha_i}^{-1} \cdot \pi$, where $g_{\alpha_i}^{-1}$ is the inverse action of $g_{\alpha_i}$); this has an effect of placing the value $N-1$ (originally at position $k$ of a $\alpha_k$-winnable permutation) into position $k-1$. Moreover, a $\alpha$-winnable permutation can also be  a $\alpha_k$-prefixed permutation, which has $N$ at position $k$ and $N-1$ at position $k-1$; it can arise from a $\alpha_{k-1}$-winnable permutation which has $N-1$ at position $k$ (and so must have $N$ at position $k-1$) by applying $g_{\alpha_{k-1}}^{-1}$, the inverse action of $g_{\alpha_{k-1}}$, to convert the prefix $\alpha_{k-1}$ into the prefix $\alpha_k$. Therefore,
$$Q(\alpha) = Q(\alpha_k) \cdot \sum\limits_{i=1}^{k-1} \theta^{c(\alpha_i) - c(\alpha_k)} + \theta ^{c(\alpha_{k}) - c(\alpha_{k-1})} \cdot Q(\alpha_{k-1}).$$

For~\eqref{sop''}, similarly to the analysis performed for~\eqref{sop'}, we have 
$$Q^o(\beta) = \bar{Q}(\beta_k) + \bar{Q}(\beta_{k-1}) + Q^o (\beta_{k-2}) + \ldots + Q^o (\beta_1) = \bar{Q}(\beta_k) + \bar{Q}(\beta_{k-1})+Q^o(\alpha_{k-1}) \cdot \sum\limits_{i=1}^{k-2} \theta^{c(\beta_i) - c(\alpha_{k-1})}$$

%$$=\bar{Q}(\tau_k') \cdot \theta^{c(\tau_{k}'') - c(\tau_{k}')}+\bar{Q}(\tau_{k-1}') \cdot \theta^{c(\tau_{k-1}'') - c(\tau_{k-1}')}+Q^o(\alpha_k) \cdot \sum\limits_{i=1}^{k-2} \theta^{c(\beta_i) - c(\alpha_k)}$$

$$ = \bar{Q}(\alpha_k) \cdot \theta^{c(\beta_{k}) - c(\alpha_{k})}+\bar{Q}(\alpha_{k-1}) \cdot \theta^{c(\beta_{k-1}) - c(\alpha_{k-1})}+Q^o(\alpha_{k-1}) \cdot \sum\limits_{i=1}^{k-2} \theta^{c(\beta_i) - c(\alpha_{k-1})}.$$

For~\eqref{sp''}, similarly to the analysis for~\eqref{sp'} and based on the fact that we know that no $\beta_{k}$- and $\beta_{k-1}$-prefixed permutation can be $\beta$-winnable (as the value in the $(k-1)^{\text{th}}$ position is already smaller than the value in two positions, i.e., $(k-2)$ and $k$) we have
$$Q(\beta) = Q(\alpha_{k-1}) \cdot \sum\limits_{i=1}^{k-2} \theta^{c(\beta_i) - c(\alpha_{k-1})}.$$
This completes the proof.
\end{proof}

\begin{theorem}\label{type2}
For any Type II prefixes $\sigma$ and $\tau$ with $|\sigma| = |\tau| - 1 = k-1$, we have that if $\tau$ is negative then $\sigma$ is negative. %Moreover, if $\tau$ is non-strictly positive then $\sigma$ is non-strictly positive.
\end{theorem}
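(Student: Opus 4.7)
The plan is to reduce to canonical type II prefixes and then apply Lemma~\ref{relations} directly. By Lemma~\ref{mainlemma2}(5), positivity/negativity among type II prefixes depends only on length, so I may assume without loss of generality that $\sigma = \sigma'' = [12\cdots(k-3)(k-1)(k-2)]$ and $\tau = \tau'_{k-1} = [12\cdots(k-2)k(k-1)]$. Note $k \geq 3$ since a type II prefix must have length at least $2$.

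Subtracting~\eqref{sp''} from~\eqref{sop''} yields the identity
\begin{equation*}
Q^o(\sigma'') - Q(\sigma'') = \bar{Q}(\tau'_k)\,\theta^{c(\tau_{k}'') - c(\tau_{k}')} + \bar{Q}(\tau'_{k-1})\,\theta^{c(\tau_{k-1}'') - c(\tau_{k-1}')} + \bigl(Q^o(\tau'_{k-1}) - Q(\tau'_{k-1})\bigr)\sum\limits_{i=1}^{k-2} \theta^{c(\tau_{i}'') - c(\tau_{k-1}')}.
\end{equation*}
The hypothesis that $\tau = \tau'_{k-1}$ is negative gives $Q^o(\tau'_{k-1}) - Q(\tau'_{k-1}) > 0$, and since $\theta > 0$ the sum in the last term is strictly positive (non-empty because $k \geq 3$). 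The first two summands are non-negative because $\bar{Q}(\cdot) \geq 0$ and $\theta > 0$. Hence $Q^o(\sigma'') > Q(\sigma'')$, so $\sigma''$ --- and therefore $\sigma$ --- is negative.

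The combinatorial fact driving the argument is that among the $k$ children of $\sigma''$ in the prefix tree, exactly two of the eligible children, namely the type I child $\tau''_k$ and the type II child $\tau''_{k-1}$, can never extend to a $\sigma''$-winnable permutation: in any $\sigma''$-prefixed winning permutation the value $N-1$ must land at position $k-1$, whereas $\tau''_k$ and $\tau''_{k-1}$ place the two largest relative values of the length-$k$ prefix at positions $\{k-2,k\}$ and $\{k-2,k-1\}$ respectively, precluding $N-1$ at position $k-1$. Consequently these two children contribute to $Q^o(\sigma'')$ but not to $Q(\sigma'')$, producing the extra strictly positive gap. I do not anticipate any deep obstacle: once Lemma~\ref{relations} is in hand the verification is a direct algebraic manipulation, and the only bookkeeping care is to track the sign/non-negativity of each of the three terms above.
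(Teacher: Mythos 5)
Your proof is correct and follows essentially the same route as the paper's: reduce to the canonical type II prefixes via the prefix-equivalence lemmas, then apply Lemma~\ref{relations} to compare $Q^o(\sigma'')$ with $Q(\sigma'')$. Your direct subtraction of~\eqref{sp''} from~\eqref{sop''} is a slightly cleaner bookkeeping of the same inequality chain the paper writes out (the paper absorbs the $\bar{Q}(\tau'_{k-1})$ term into the sum via $\bar{Q}(\tau'_{k-1}) \ge Q^o(\tau'_{k-1})$, which you avoid), but the substance is identical.
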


\begin{proof}
Let $\hat{\sigma} = [1 \cdots (k-3)(k-1)(k-2)]$ and $\hat{\tau} = [1 \cdots (k-2)k(k-1)]$. Suppose that $\hat{\tau}$ is negative so that $Q^o(\hat{\tau}) >  Q(\hat{\tau})$. Then by Lemma~\ref{relations} we have $$Q^o(\hat{\sigma}) = \bar{Q}(\alpha_k) \cdot \theta^{c(\beta_{k}) - c(\alpha_{k})}+\bar{Q}(\alpha_{k-1}) \cdot \theta^{c(\beta_{k-1}) - c(\alpha_{k-1})}+Q^o(\alpha_{k-1}) \cdot \sum\limits_{i=1}^{k-2} \theta^{c(\beta_i) - c(\alpha_{k-1})}$$
$$ \ge \bar{Q}(\alpha_k) \cdot \theta^{c(\beta_{k})-c(\alpha_k)} + Q^o(\alpha_{k-1}) \cdot \sum\limits_{i=1}^{k-1} \theta^{c(\beta_i) - c(\alpha_{k-1})} >  \bar{Q}(\alpha_k) \cdot \theta^{c(\beta_{k})-c(\alpha_k)} + Q(\alpha_{k-1}) \cdot \sum\limits_{i=1}^{k-1} \theta^{c(\beta_i) - c(\alpha_{k-1})} \ge Q(\hat{\sigma}). $$

The same conclusion is valid for every pair of Type II prefixes $\sigma$ and $\tau$ with $|\sigma| = |\tau| - 1 = k - 1$ since 
$$Q^o(\sigma) = Q^o(\hat{\sigma}) \cdot \theta^{c(\sigma) - c(\hat{\sigma})} > Q(\hat{\sigma}) \cdot \theta^{c(\sigma) - c(\hat{\sigma})} = Q(\sigma).$$
\end{proof}

\begin{theorem}\label{type1}
Let $\sigma$ and $\tau$ be  Type I prefixes with $|\sigma| = |\tau| - 1 = k - 1$. Let $Q^o(\alpha_{k-1}) \ge Q(\alpha_{k-1})$, where $\alpha_{k-1}$ is a Type II prefix of length $k$. Then if $\tau$ is negative then $\sigma$ is negative. %Moreover, if $\tau$ is non-strictly positive then $\sigma$ is non-strictly positive.
\end{theorem}

\begin{proof}
Let $\tilde{\sigma} = [1 \cdots (k-1)]$ and $\tilde{\tau} = [1 \cdots k]$. Suppose that $\tilde{\tau}$ is negative so that $Q^o(\tilde{\tau}) >  Q(\tilde{\tau})$. Then by Lemma~\ref{relations} we have
$$Q^o(\tilde{\sigma})-Q(\tilde{\sigma}) = \bar{Q}(\alpha_k) + \bar{Q}(\alpha_{k-1}) + Q^o(\alpha_k) \cdot \sum\limits_{i=1}^{k-2} \theta^{c(\alpha_i) - c(\alpha_k)} - Q(\alpha_k) \cdot \sum\limits_{i=1}^{k-1} \theta^{c(\alpha_i) - c(\alpha_k)} - Q(\alpha_{k-1}) \cdot \theta ^{c(\alpha_{k}) - c(\alpha_{k-1})}$$

$$ = \bar{Q}(\alpha_k) + \bar{Q}(\alpha_{k-1}) - Q(\alpha_k) \cdot \theta^{c(\alpha_{k-1}) - c(\alpha_k)} - Q(\alpha_{k-1}) \cdot \theta ^{c(\alpha_{k}) - c(\alpha_{k-1})} + (Q^o(\alpha_k) - Q(\alpha_k)) \cdot \sum\limits_{i=1}^{k-2} \theta^{c(\alpha_i) - c(\alpha_k)}$$

$$>  \bar{Q}(\alpha_k) + \bar{Q}(\alpha_{k-1}) - Q(\alpha_k) \cdot \theta^{c(\alpha_{k-1}) - c(\alpha_k)} - Q(\alpha_{k-1}) \cdot \theta ^{c(\alpha_{k}) - c(\alpha_{k-1})} >  \bar{Q}(\alpha_k) - Q(\alpha_{k-1}) \cdot \theta ^{c(\alpha_{k}) - c(\alpha_{k-1})},$$

where the last inequality holds since 
$$\bar{Q}(\alpha_{k-1}) \ge Q^o(\alpha_{k-1}) = Q^o(\alpha_{k}) \cdot \theta^{c(\alpha_{k-1}) - c(\alpha_k)} > Q(\alpha_k) \cdot \theta^{c(\alpha_{k-1}) - c(\alpha_k)}.$$ 
Since $ Q^o(\alpha_{k-1}) \ge Q(\alpha_{k-1})$, we have 
$$ \bar{Q}(\alpha_k) - Q(\alpha_{k-1})\cdot \theta ^{c(\alpha_{k}) - c(\alpha_{k-1})} \ge Q^o(\alpha_k)- Q(\alpha_{k-1}) \cdot \theta ^{c(\alpha_{k}) - c(\alpha_{k-1})} \ge (Q^o(\alpha_{k-1}) - Q(\alpha_{k-1})) \cdot \theta ^{c(\alpha_{k}) - c(\alpha_{k-1})} \ge 0.$$

The same conclusion holds for every pair of Type I prefixes $\sigma$ and $\tau$ with $|\sigma| = |\tau| - 1 = k - 1$ since $Q^o(\sigma) = Q^o(\tilde{\sigma}) \cdot \theta^{c(\sigma) - c(\tilde{\sigma})} >  Q(\tilde{\sigma}) \cdot \theta^{c(\sigma) - c(\tilde{\sigma})} = Q(\sigma).$ 
\end{proof}

%%%%%%%%%%%%%%%%%%%%%%%%%%%%%%%%%%%%%%%%%%%%%%%%%%%%%%%%%%%%%%%%%%%%%%%%%%%%%%%%%%%%

\section{Winning Strategies Under the Mallows Model} 
\label{sec:strategies}

Henceforth, we use $k \not \to \infty$ to denote that there exists a constant $C>0$ such that $k \le C$. We simplify our notation as follows: $Q^o_i(k)$ will henceforth denote the numerator of the probability $Q$ over the standard denominator, for type $i$ prefixes of length $k$, where $i \in [2]$. Similarly, $Q^o_i(k)$ will denote the numerator of the probability $Q^o$ over the standard denominator for type $i$ prefixes of length $k$, where $i \in [2]$. Using this notation in Lemma~\ref{relations} we have 

$$Q_1^o(k-1) = Q^o(\sigma'), Q_1(k-1)=Q(\sigma'), Q_2^o(k-1) = Q^o(\sigma''), Q_2(k-1)=Q(\sigma'').$$ 

Since $c(\tau_k') = 0$, $c(\tau_i') = k-i$, and $c(\tau_i'') = k+1-i$, the results of Lemma~\ref{relations} reduce to 
\begin{equation}\label{eq1}
Q^o_1(k-1) = \bar{Q}_1(k) + \bar{Q}_2(k) + Q_1^o(k) \cdot (\theta^{k-1} + \theta^{k-2} + \ldots + \theta^2),
\end{equation}
\begin{equation}\label{eq2}
Q_1(k-1) = Q_1(k) \cdot (\theta^{k-1} + \theta^{k-2} + \ldots + \theta) + \frac{1}{\theta} \cdot Q_2(k),
\end{equation}
\begin{equation}\label{eq3}
Q_2^o(k-1) = \bar{Q}_1(k) \cdot \theta + \bar{Q}_2(k) \cdot \theta + Q_2^o(k) \cdot (\theta^{k-1} + \theta^{k-2} + \ldots + \theta^2),
\end{equation}
\begin{equation}\label{eq4}
Q_2(k-1) = Q_2(k) \cdot (\theta^{k-1} + \theta^{k-2} + \ldots + \theta^2),
\end{equation}
where $Q_i^o(N) = 0 = Q_1(N)$, $Q_2(N) = \theta$, $Q_2^o(k) = \theta \cdot Q_1^o(k)$, and every value taken by $Q$, $Q^o$, $\bar{Q}$ is nonnegative. In this section, we will assume by default that $\theta \neq 1$ unless stated otherwise. 

\begin{defn}
Let $P_N(\theta)$ (henceforth written as $P_N$ to avoid notational clutter) be the polynomial in $\theta$ equal to $1 + \theta + \theta^2 + \cdots + \theta^{N-1}$. Furthermore, let $(P_{N})!$ be the polynomial in $\theta$ equal to $(P_{N})! = P_N\,P_{N-1} \cdots P_1.$
\end{defn}

\begin{claim}\label{q2}
One has
$$Q_2(k) = \theta^{2N-2k+1} \cdot \frac{(P_{N-2})!}{(P_{k-2})!}.$$
\end{claim}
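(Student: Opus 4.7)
The plan is a straightforward downward induction on $k$, starting at $k=N$ and using equation~\eqref{eq4} of Lemma~\ref{relations} as the step.

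For the base case $k=N$, the formula gives $Q_2(N) = \theta^{1}\cdot\frac{(P_{N-2})!}{(P_{N-2})!} = \theta$, matching the initial condition stated just after~\eqref{eq4}. For the inductive step, I assume
\[
Q_2(k) \;=\; \theta^{2N-2k+1}\cdot\frac{(P_{N-2})!}{(P_{k-2})!}
\]
and substitute into~\eqref{eq4}. The key algebraic observation is that the geometric-series factor telescopes nicely into the $(P_k)!$ notation: rewrite
\[
\theta^{k-1}+\theta^{k-2}+\cdots+\theta^{2} \;=\; \theta^{2}\,(1+\theta+\cdots+\theta^{k-3}) \;=\; \theta^{2}\,P_{k-2}.
\]
Then
\[
Q_2(k-1) \;=\; \theta^{2N-2k+1}\cdot\frac{(P_{N-2})!}{(P_{k-2})!}\cdot\theta^{2}\,P_{k-2} \;=\; \theta^{2N-2(k-1)+1}\cdot\frac{(P_{N-2})!}{(P_{k-3})!},
\]
using the definitional identity $(P_{k-2})! = P_{k-2}\cdot (P_{k-3})!$. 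This is exactly the claimed formula with $k$ replaced by $k-1$, completing the induction.

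I expect no real obstacle here; the only thing to watch is the indexing on the geometric factor (that the sum runs $\theta^2,\ldots,\theta^{k-1}$, giving $k-2$ terms and hence the factor $P_{k-2}$, not $P_{k-1}$) and the fact that $\theta^{2N-2k+1}\cdot\theta^{2}=\theta^{2N-2(k-1)+1}$, which matches the required exponent shift. The factor $\theta^2$ coming out of the geometric sum is precisely what accounts for the $+2$ jump in the exponent of $\theta$ as $k$ decreases by one, which is a reassuring consistency check that the recurrence and the closed form agree.
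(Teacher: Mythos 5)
Your proposal is correct and follows exactly the route the paper takes: rewrite the geometric factor in~\eqref{eq4} as $\theta^2 P_{k-2}$ and solve the resulting recurrence downward from the initial condition $Q_2(N)=\theta$; the paper simply omits the verification details that you spell out. Your index bookkeeping (the sum having $k-2$ terms, the exponent shift $\theta^{2N-2k+1}\cdot\theta^2=\theta^{2N-2(k-1)+1}$, and $(P_{k-2})! = P_{k-2}\cdot(P_{k-3})!$) all checks out.
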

\begin{proof}
Since~\eqref{eq4} can be written as $$Q_2(k-1) = Q_2(k) \cdot \theta^2 \cdot P_{k-2} \quad \text{ and we know that } \quad Q_2(N) = \theta,$$ we can solve the recurrence relation (details are omitted) to obtain the claimed formula.
\end{proof}
\begin{claim}\label{q1}
One has
$$Q_1(k) = \theta^{N-k-1} \cdot P_{N-k} \cdot \frac{(P_{N-2})!}{(P_{k-1})!}, \text{ where } Q_1(N) = 0.$$
\end{claim}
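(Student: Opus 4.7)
The plan is to prove the formula by backward induction on $k$, starting from $k=N$ and descending, using recurrence~\eqref{eq2} together with the already-established Claim~\ref{q2}. First I rewrite the sum of powers appearing in~\eqref{eq2} as $\theta^{k-1}+\theta^{k-2}+\cdots+\theta = \theta \cdot P_{k-1}$, so that the recurrence takes the cleaner form
\begin{equation*}
Q_1(k-1) = \theta\,P_{k-1}\,Q_1(k) + \frac{1}{\theta}\,Q_2(k).
\end{equation*}
Substituting the closed form of $Q_2(k)$ from Claim~\ref{q2} yields
\begin{equation*}
Q_1(k-1) = \theta\,P_{k-1}\,Q_1(k) + \theta^{2N-2k}\cdot \frac{(P_{N-2})!}{(P_{k-2})!}.
\end{equation*}

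For the base case $k=N$, the proposed formula gives $Q_1(N) = \theta^{-1}\cdot P_0 \cdot \frac{(P_{N-2})!}{(P_{N-1})!}$; since $P_0 = 0$ (the empty sum), this equals $0$, matching the stated initial condition. For the inductive step, I assume
\begin{equation*}
Q_1(k) = \theta^{N-k-1} \cdot P_{N-k} \cdot \frac{(P_{N-2})!}{(P_{k-1})!}
\end{equation*}
and plug into the simplified recurrence. Using the telescoping identity $(P_{k-1})!/(P_{k-2})! = P_{k-1}$ the first term collapses to $\theta^{N-k}\,P_{N-k}\,\frac{(P_{N-2})!}{(P_{k-2})!}$, and after factoring $\theta^{N-k}\cdot\frac{(P_{N-2})!}{(P_{k-2})!}$ from both summands I obtain
\begin{equation*}
Q_1(k-1) = \theta^{N-k} \cdot \frac{(P_{N-2})!}{(P_{k-2})!}\cdot\bigl(P_{N-k} + \theta^{N-k}\bigr).
\end{equation*}
The final step uses the elementary identity $P_{N-k} + \theta^{N-k} = 1 + \theta + \cdots + \theta^{N-k} = P_{N-k+1}$, which turns the right-hand side into exactly the claimed expression for $Q_1((k-1))$ with $k$ replaced by $k-1$.

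The computation is essentially routine once the recurrence is rewritten with the $\theta\,P_{k-1}$ factor, so there is no real obstacle beyond careful bookkeeping of the exponents of $\theta$ and of the indices in $(P_{\,\cdot})!$. The two small points to watch are (i) the convention $P_0 = 0$ needed to make the base case consistent with $Q_1(N)=0$, and (ii) the identity $P_m + \theta^m = P_{m+1}$ which drives the inductive step. No other ingredients from the paper are required.
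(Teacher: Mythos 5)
Your proposal is correct and follows the same route as the paper, which simply states the recurrence $Q_1(k-1) = \theta\,P_{k-1}\,Q_1(k) + \frac{1}{\theta}Q_2(k)$ and omits the details of solving it; your backward induction (using $(P_{k-1})!/(P_{k-2})! = P_{k-1}$ and $P_{m}+\theta^{m}=P_{m+1}$) supplies exactly those omitted details, and the base case checks out (e.g.\ $Q_1(N-1)=1$ agrees with the formula since $P_1=1$).
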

\begin{proof}
By Claim~\ref{q2} and~\eqref{eq2} it holds 
$$Q_1(k-1) = Q_1(k) \cdot \theta \cdot P_{k-1} + \frac{1}{\theta} \cdot Q_2(k).$$ 
Solving the recurrence (details are omitted) proves the claim.
\end{proof}

Using Claim~\ref{q2} and~\ref{q1} we arrive at
\begin{equation}\label{q2q1}
Q_2(k)/Q_1(k) = \theta^{N-k+2} \cdot \frac{P_{k-1}}{P_{N-k}} = \theta^{N-k+2} \cdot \frac{\theta^{k-1}-1}{\theta^{N-k} - 1} = \theta^2 \cdot \frac{\theta^{k-1} - 1}{1-1/\theta^{N-k}}.
\end{equation}

Since we are interested in asymptotic strategies, we assume throughout this section that $N \to \infty$. Our main results are derived in subsections~\ref{theta>1} and~\ref{theta<1}; these are followed by a discussion of general optimal strategies (without specific thresholds) in Subsection~\ref{strategymallows}. The precise optimal strategies (with specific thresholds) and the optimal probabilities are presented in Section~\ref{sec:mallows}.

\subsection{The Case $\theta>1$ (and $\theta=1$)}\label{theta>1}

\begin{theorem}\label{thetalarge}
Let $2 \le k<N$. If $Q^o_1(k)>Q_1(k)$, then $Q^o_1(k-1)>Q_1(k-1)$.
\end{theorem}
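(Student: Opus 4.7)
The plan is to expand $Q_1^o(k-1) - Q_1(k-1)$ using the recurrences~\eqref{eq1} and~\eqref{eq2}, apply the hypothesis to set $\bar{Q}_1(k) = Q_1^o(k)$, and then split on the two possibilities for $\bar{Q}_2(k) = \max(Q_2(k), Q_2^o(k))$, using the identity $Q_2^o(k) = \theta\,Q_1^o(k)$ stated after~\eqref{eq4}.

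Write $r := Q_1^o(k) - Q_1(k) > 0$ and let $S := \theta + \theta^2 + \cdots + \theta^{k-1}$, so that the parenthesized sum in~\eqref{eq1} is $S - \theta$ while the one in~\eqref{eq2} is $S$. Substituting $Q_1^o(k) = Q_1(k) + r$ and $\bar{Q}_1(k) = Q_1^o(k)$ into the two recurrences and differencing yields
\[
Q_1^o(k-1) - Q_1(k-1) = Q_1(k)(1-\theta) + r(S - \theta + 1) + \bar{Q}_2(k) - Q_2(k)/\theta,
\]
where $S - \theta + 1 = 1 + \theta^2 + \cdots + \theta^{k-1} \ge 1$, so the middle summand is strictly positive.

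In the first case, $\bar{Q}_2(k) = Q_2(k)$, equivalently $Q_2(k) \ge \theta\,Q_1^o(k) > \theta\,Q_1(k)$. Then $\bar{Q}_2(k) - Q_2(k)/\theta = Q_2(k)(1 - 1/\theta)$, and multiplying $Q_2(k) > \theta\,Q_1(k)$ by the nonnegative factor $(1 - 1/\theta)$ (valid for $\theta \ge 1$) gives $Q_2(k)(1 - 1/\theta) \ge Q_1(k)(\theta - 1)$. This exactly absorbs the possibly nonpositive term $Q_1(k)(1-\theta)$, leaving the strictly positive remainder $r(S - \theta + 1)$. In the second case, $\bar{Q}_2(k) = \theta\,Q_1^o(k) = \theta(Q_1(k) + r)$; plugging this in and using $S + 1 = P_k$ reduces the displayed expression to
\[
Q_1^o(k-1) - Q_1(k-1) = Q_1(k) + r\,P_k - Q_2(k)/\theta,
\]
and the case hypothesis $Q_2(k) \le \theta\,Q_1^o(k)$ forces $Q_2(k)/\theta \le Q_1(k) + r$, so the right-hand side is at least $r(P_k - 1) > 0$ since $P_k > 1$ for $k \ge 2$ and $\theta > 0$.

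The main obstacle is the first case: for $\theta > 1$ the term $Q_1(k)(1 - \theta)$ is strictly negative, so the positivity of $r$ alone does not close the argument, and one genuinely needs the case hypothesis $Q_2(k) \ge \theta\,Q_1^o(k)$ to dominate the loss. The rest is careful bookkeeping of the geometric sums $S$, $S - \theta$, and $P_k = S + 1$, together with the identity $Q_2^o(k) = \theta\,Q_1^o(k)$.
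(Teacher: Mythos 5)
Your proof is correct, and it is a genuinely different—and in fact tighter—argument than the one in the paper. The paper's proof also starts from \eqref{eq1} and \eqref{eq2}, but it immediately discards the strictly positive term $(Q_1^o(k)-Q_1(k))\cdot(\theta^2+\cdots+\theta^{k-1})$ and is then forced to show that the residual quantity $f(k)=\bar{Q}_1(k)+\bar{Q}_2(k)-\theta Q_1(k)-\tfrac{1}{\theta}Q_2(k)$ is nonnegative; to do so it splits on whether $k\to\infty$ or not and invokes the asymptotic value of the ratio $Q_2(k)/Q_1(k)$ obtained from the closed forms in Claims~\ref{q2} and~\ref{q1} (see~\eqref{q2q1}), so its Cases A and B.2 are limit arguments as $N\to\infty$. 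You instead keep the positive term $r(S-\theta+1)$ in play, use the exact identities $\bar{Q}_1(k)=Q_1^o(k)$ (from the hypothesis) and $\bar{Q}_2(k)=\max(Q_2(k),Q_2^o(k))$ with $Q_2^o(k)=\theta Q_1^o(k)$, and split on which branch of the max is active. Both of your cases close by exact cancellation: in the first, $Q_2(k)(1-1/\theta)\ge Q_1(k)(\theta-1)$ absorbs the loss $Q_1(k)(1-\theta)$ (this is where $\theta\ge 1$ enters, consistent with Subsection~\ref{theta>1}); in the second, the bound $Q_2(k)/\theta\le Q_1(k)+r$ leaves $r(P_k-1)>0$. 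What your approach buys is a purely finite, non-asymptotic proof valid for every $N$ and every $2\le k<N$, with no reliance on the closed-form expressions for $Q_1(k)$ and $Q_2(k)$ or on limits of their ratio; it also sidesteps the paper's somewhat delicate inequality~\eqref{theta}, which as stated needs the Case B.2 hypothesis to hold rather than following from $\theta>1$ and $k\ge 2$ alone. The one thing to make explicit if you write this up is that the theorem lives in the $\theta\ge 1$ regime, since your first case genuinely uses $1-1/\theta\ge 0$.
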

\begin{proof}
Let $f(k):= \bar{Q}_1(k) + \bar{Q}_2(k) - \theta \cdot Q_1(k) - \frac{1}{\theta} \cdot Q_2(k)$.
%\textbf{Case A: $k = N$.}
%Now we assume that $k<N$.

\textbf{Case A: $k \to \infty$.} Since $1-1/\theta \le 1-1/\theta^{N-k} < 1$, it holds that$~\eqref{q2q1} \to \infty$. Moreover, since $Q^o_1(k)>Q_1(k)$, $\theta > 1$ is a constant, $\eqref{q2q1} \to \infty$, and~\eqref{eq1} and~\eqref{eq2} hold true, we have
$$Q_1^o(k-1) - Q_1(k-1) = f(k) + (Q_1^o(k) - Q_1(k)) \cdot (\theta^2 + \ldots + \theta^{k-1})> f(k) \ge (1-\theta) \cdot Q_1(k) + (1-1/\theta) \cdot Q_2(k) \ge 0;$$
the second inequality holds since $\bar{Q}_1(k) \ge Q_1(k)$ and $\bar{Q}_2(k) \ge Q_2(k)$.

\textbf{Case B: $k \not \to \infty$.} Then $\eqref{q2q1} \to \theta^2 \cdot (\theta^{k-1} - 1)$, and by $\theta>1$ and $k \ge 2$ we have 
\begin{equation}\label{theta}
\theta \cdot (\theta^{k-1} - 1) > 1.
\end{equation}
%We consider two cases.

\textbf{Case B.1:} $Q_2(k) \le \theta \cdot Q_1(k).$ Since $Q_1^o(k) > Q_1(k)$, we have $Q_2^o(k) = \theta \cdot Q_1^o(k) > \theta \cdot Q_1(k)$. Thus $$Q_1^o(k-1) - Q_1(k-1) = f(k) + (Q_1^o(k) - Q_1(k)) \cdot (\theta^2 + \ldots + \theta^{k-1}) > f(k) $$
$$ \ge Q_1(k) + Q_2^o(k) - \theta \cdot Q_1(k) - \frac{1}{\theta} \cdot Q_2(k)$$
$$> Q_1(k) + \theta \cdot Q_1(k) - \theta \cdot Q_1(k) - \frac{1}{\theta} \cdot Q_2(k) = Q_1(k) - \frac{1}{\theta} \cdot Q_2(k) \ge 0;$$
the second inequality holds since $\bar{Q}_1(k) \ge Q_1(k)$ and $\bar{Q}_2(k) \ge Q_2^o(k)$, while the third and fourth inequality follow from the first line of Case B.1.

\textbf{Case B.2:} $Q_2(k) > \theta \cdot Q_1(k)$. Therefore, by $Q_2(k)/Q_1(k) \to \theta^2 \cdot (\theta^{k-1} - 1)$, and from~\eqref{theta} and $\theta > 1$,
$$Q_1^o(k-1) - Q_1(k-1) > f(k) \ge Q_1(k) + Q_2(k) - \theta \cdot Q_1(k) - \frac{1}{\theta} \cdot Q_2(k)$$
$$= \text{ } Q_1(k) + (\theta^{k-1}-1) \cdot \theta^2 \cdot Q_1(k) - \theta \cdot Q_1(k) - \theta \cdot (\theta^{k-1} - 1) \cdot Q_1(k) $$
$$
= (1+\theta^{k+1} - \theta^2 - \theta^k) \cdot Q_1(k) = (\theta^k-\theta-1) \cdot (\theta-1) \cdot Q_1(k)> 0.
$$
\end{proof}

\begin{remark}\label{theta=1}
For $\theta = 1$, by taking the difference of~\eqref{eq1} and~\eqref{eq2}, we have 
$$Q_1^o(k-1) - Q_1(k-1) = \bar{Q}_1(k) - Q_1(k) + \bar{Q}_2(k) - Q_2(k) + (Q_1^o(k) - Q_1(k)) \cdot (k-2).$$ 
Since $Q_1(N) = Q_1^o(N) = 0$ and $Q_2(N) = \theta > 0 = Q_2^o(N)$, $Q_1^o(k) - Q_1(k)$ remains zero until the inequality $Q_2(k)<Q_2^o(k)$ starts to hold. More precisely, by Theorem~\ref{type1}, if $k_2$ is the largest index such that $Q_2(k_2)<Q_2^o(k_2)$, then the largest index $k_1$ such that $Q_1(k_1)<Q_1^o(k_1)$ equals $k_2-1$.
\end{remark}

\subsection{The case $\theta<1$}\label{theta<1}
\subsubsection{The subcase $0< \theta < 1/2$}\label{thetasmall}

Since the Type II prefixes of length at most $N-1$ are negative, we only need to consider Type I prefixes. By Theorem~\ref{type1}, there exists a threshold $k_1$ for negative Type I prefixes and positive Type I prefixes. 

\subsubsection{The subcase $1/2<\theta<1$}\label{thetamiddle}

\begin{theorem}\label{thetamiddle1}
Let $\sigma$ and $\tau$ be Type I prefixes with $|\sigma| = |\tau| - 1 = k - 1$. Let $N-k \to \infty$. Then if $\tau$ is negative then $\sigma$ is negative.
\end{theorem}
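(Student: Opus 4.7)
The plan is to mimic the structure of Theorem~\ref{thetalarge}, but use the asymptotic hypothesis $N-k\to\infty$ in place of the algebraic slack that $\theta>1$ provided there. First I would invoke Lemmas~\ref{mainlemma1} and~\ref{mainlemma2} exactly as at the end of Theorem~\ref{type1} to reduce the statement to the canonical type I prefixes $\tilde{\sigma}=[1\cdots(k-1)]$ and $\tilde{\tau}=[1\cdots k]$, since every $Q$ and $Q^o$ value of a type I prefix of fixed length differs from the canonical one only by a common positive factor and hence the sign of $Q^o-Q$ is preserved.

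Next, subtracting~\eqref{eq2} from~\eqref{eq1} exactly as in the opening of Theorem~\ref{thetalarge} gives
\[Q_1^o(k-1)-Q_1(k-1)=f(k)+(Q_1^o(k)-Q_1(k))\cdot(\theta^2+\cdots+\theta^{k-1}),\]
where $f(k):=\bar{Q}_1(k)+\bar{Q}_2(k)-\theta\,Q_1(k)-\tfrac{1}{\theta}Q_2(k)$. Since $\tilde{\tau}$ is negative we have $Q_1^o(k)>Q_1(k)$, so the second summand is nonnegative (it is empty only in the degenerate case $k=2$, where $f(k)$ carries the whole burden). Thus it suffices to show $f(k)>0$ once $N-k$ is large enough.

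For the lower bound on $f(k)$ I would use the negativity of $\tilde{\tau}$ twice: $\bar{Q}_1(k)=Q_1^o(k)$, and $\bar{Q}_2(k)\ge Q_2^o(k)=\theta\,Q_1^o(k)$ via the identity $Q_2^o=\theta\,Q_1^o$ recorded just below~\eqref{eq4}. Substituting and then applying $Q_1^o(k)>Q_1(k)$ one more time yields
\[f(k)>(1+\theta)Q_1(k)-\theta Q_1(k)-\tfrac{1}{\theta}Q_2(k)=Q_1(k)\Bigl(1-\tfrac{1}{\theta}\cdot\tfrac{Q_2(k)}{Q_1(k)}\Bigr),\]
so positivity of $f(k)$ reduces to the estimate $Q_2(k)/Q_1(k)<\theta$. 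By the closed form~\eqref{q2q1}, for $\theta\in(0,1)$,
\[\frac{Q_2(k)}{Q_1(k)}=\theta^2\cdot\frac{1-\theta^{k-1}}{\theta^{-(N-k)}-1}\le\frac{\theta^2}{\theta^{-(N-k)}-1}\longrightarrow 0\]
as $N-k\to\infty$, uniformly in $k\ge 2$; in particular the ratio is eventually below $\theta$ and the argument closes.

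The main obstacle I expect is purely bookkeeping: ensuring uniformity of the estimate $Q_2(k)/Q_1(k)\to 0$ across all admissible $k$ with a single threshold on $N-k$, and spelling out the boundary case $k=2$ where the second summand in the main identity vanishes. In contrast to the $\theta>1$ case of Theorem~\ref{thetalarge}, no algebraic identity forces $f(k)>0$ on its own here; the positivity is genuinely asymptotic, driven by the geometric blow-up of $\theta^{-(N-k)}$ in the denominator of~\eqref{q2q1}.
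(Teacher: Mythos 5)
Your proposal is correct and follows essentially the same route as the paper: both subtract~\eqref{eq2} from~\eqref{eq1}, bound the same quantity $f(k)=\bar{Q}_1(k)+\bar{Q}_2(k)-\theta Q_1(k)-\tfrac{1}{\theta}Q_2(k)$ from below, and reduce positivity to the condition $Q_2(k)/Q_1(k)<\theta$, which the closed form~\eqref{q2q1} delivers as $N-k\to\infty$. The only cosmetic difference is that you invoke $\bar{Q}_2(k)\ge Q_2^o(k)=\theta Q_1^o(k)$ where the paper uses the weaker $\bar{Q}_2(k)\ge Q_2(k)$; both paths terminate at the identical ratio threshold.
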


\begin{proof}
Let $N \to \infty$ and $f(k):= \bar{Q}_1(k) + \bar{Q}_2(k) - \theta \cdot Q_1(k) - \frac{1}{\theta} \cdot Q_2(k)$. By Claim~\ref{q2} and Claim~\ref{q1},
\begin{equation}\label{q1q2}
\frac{Q_1(k)}{Q_2(k)} = \frac{1}{\theta^2} \cdot \frac{1}{\theta^{N-k}} \cdot \frac{P_{N-k}}{P_{k-1}} = \frac{1}{\theta^2} \cdot \frac{1}{\theta^{N-k}} \cdot \frac{1-\theta^{N-k}}{1-\theta^{k-1}} \to \infty.
\end{equation}
Moreover, since $\tau$ is Type I negative, one has $Q^o_1(k)>Q_1(k)$; by noting that $1/2 < \theta < 1$ is a constant, that $\bar{Q}_1(k) \ge Q_1(k)$, $\bar{Q}_2(k) \ge Q_2(k)$, and from~\eqref{q1q2}, we obtain 
$$Q_1^o(k-1) - Q_1(k-1) = f(k) + (Q_1^o(k) - Q_1(k)) \cdot (\theta^2 + \ldots + \theta^{k-1})> f(k) \ge (1-\theta) \cdot Q_1(k) + (1-1/\theta) \cdot Q_2(k)>0.$$
\end{proof}

\begin{theorem}\label{thetamiddle2}
Let $\sigma'$ and $\sigma''$ be a Type I prefix and a Type II prefix of length $k$, respectively. Let $1/2<\theta<1$ and $N-k \not \to \infty$. For $k<N$, if $\sigma''$ is (strictly) positive then $\sigma'$ is (strictly) positive.%; if $\sigma''$ is non-negative then $\sigma'$ is non-negative. 
\end{theorem}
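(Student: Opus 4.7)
The plan is to proceed by backward induction on the prefix length $k$, starting at the top $k = N-1$. At the base, reading off \eqref{eq1}--\eqref{eq4} at $k=N$ gives $Q_1(N-1)=1$ and $Q_1^o(N-1)=\theta$, and since $\theta<1$ this already makes $\sigma'$ strictly positive at $k=N-1$; the implication is trivially satisfied there.

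For the inductive step at length $k<N-1$, suppose the implication holds at every $k'>k$ and that $\sigma''$ of length $k$ is (strictly) positive, i.e., $Q_2(k)\ge\theta Q_1^o(k)$ (strict in the strict case). The contrapositive of Theorem~\ref{type2} delivers that $\sigma''$ is (strictly) positive at length $k+1$, and the inductive hypothesis then forces $\sigma'$ to be (strictly) positive at $k+1$. Hence $\bar{Q}_1(k+1)=Q_1(k+1)$, $\bar{Q}_2(k+1)=Q_2(k+1)$, and $Q_1^o(k+1)\le Q_1(k+1)$. Substituting into \eqref{eq1}--\eqref{eq2} and using the elementary identity $\theta P_k-\theta^2 P_{k-1}=\theta$, the difference $D(k):=Q_1(k)-Q_1^o(k)$ rearranges to
\[
D(k) \;=\; \theta^2 P_{k-1}\bigl(Q_1(k+1)-Q_1^o(k+1)\bigr) + \frac{1-\theta}{\theta}\bigl(Q_2(k+1)-\theta Q_1(k+1)\bigr),
\]
in which the first summand is nonnegative by the inductive hypothesis.

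Controlling the second summand is the crux. The plan is to establish the auxiliary claim that, under the hypothesis of the theorem, $Q_2(k+1)\ge \theta Q_1(k+1)$. Via \eqref{q2q1}, this inequality is equivalent, in the asymptotic regime $N\to\infty$ with $m:=N-k$ fixed, to the polynomial condition $\theta^{m-1}(1+\theta)\ge 1$. As a backup lower bound useful where this primary one is close to failing, I would expand the hypothesis $Q_2(k)\ge\theta Q_1^o(k)$ through \eqref{eq1}--\eqref{eq4} and the positivity data at $k+1$ to obtain $\theta^2 P_{k-1} Q_1^o(k+1)\le (\theta P_{k-1}-1)Q_2(k+1)-Q_1(k+1)$; substitution into the raw expression for $D(k)$ then gives $D(k)\ge \theta P_k Q_1(k+1) + (1-\theta^2 P_{k-1}) Q_2(k+1)/\theta$, automatically nonnegative whenever $\theta^2 P_{k-1}\le 1$ (covering $\theta\le(\sqrt{5}-1)/2$).

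The hardest step will be the threshold comparison underpinning the auxiliary claim: for each fixed bounded $m$ and every $\theta\in(1/2,1)$ at which $\sigma''$ is positive at length $N-m$, one must verify $\theta^{m-1}(1+\theta)\ge 1$. I would handle this by a secondary induction on $m$, in which both the type II positivity threshold and the primary-bound threshold are read off as roots of explicit polynomials derivable from \eqref{eq1}--\eqref{eq4} together with Claims~\ref{q1}--\ref{q2}; the $m=2$ case serves as the template, where direct polynomial computation certifies that the type II threshold $(\approx \sqrt{3}-1)$ comfortably exceeds the primary-bound threshold $(\sqrt{5}-1)/2$. The strictly positive version of the theorem then follows by propagating strict inequalities throughout the entire argument.
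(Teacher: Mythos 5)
Your inductive skeleton coincides with the paper's: backward induction on $m=N-k$, the same base case $Q_1(N-1)=1>\theta=Q_1^o(N-1)$, and the use of the contrapositive of Theorem~\ref{type2} together with the inductive hypothesis to force $\bar{Q}_1(k+1)=Q_1(k+1)$, $\bar{Q}_2(k+1)=Q_2(k+1)$. Your identity $D(k)=\theta^2P_{k-1}\bigl(Q_1(k+1)-Q_1^o(k+1)\bigr)+\frac{1-\theta}{\theta}\bigl(Q_2(k+1)-\theta Q_1(k+1)\bigr)$ is a correct rearrangement of \eqref{eq1}--\eqref{eq2}, and the backup bound covering $\theta^2P_{k-1}\le 1$ checks out. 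The genuine gap is in the step you yourself flag as hardest: the auxiliary claim that type II positivity at length $N-m$ forces $\theta^{m-1}(1+\theta)\ge 1$ is verified only at $m=2$, and the proposed ``secondary induction on $m$'' is not an argument. The type II positivity condition at length $N-m$ is the sign condition on a polynomial whose degree grows with $m$ (the paper's $h_1(m)\ge 0$ in \eqref{root2}), and you supply no inductive mechanism relating the positivity threshold for $m$ to that for $m+1$. This per-$m$ threshold comparison is precisely the technical core of the theorem, and it is what the paper's Claim~\ref{calc} is devoted to.

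Nor can the missing step be imported from the paper, because the inequality you need points the other way. In the paper's notation, your auxiliary claim amounts to $j_1(\theta)\le j_3(\theta)+1$, where $j_1$ is the unique positive root of $h_1$ and $j_3=-\ln(1+\theta)/\ln\theta$ is the root of $h_3(j)=1-\theta^j-\theta^{j+1}$; what the paper actually proves is $h_1(j_3(\theta))>0$, i.e., $j_3(\theta)<j_1(\theta)$. The paper never needs your intermediate inequality $Q_2(k+1)\ge\theta Q_1(k+1)$: instead it solves the recursions in closed form under the positivity hypotheses, reduces hypothesis and conclusion to the explicit polynomials $h_1$ and $h_2=h_1+\theta h_3$ in the continuous variable $j$, and compares their unique positive roots via a linear-versus-exponential intersection argument plus one calculus inequality. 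To complete your version you would still have to prove $j_1(\theta)\le j_3(\theta)+1$ (or its integer-valued consequence) for all $\theta\in\bigl((\sqrt5-1)/2,1\bigr)$ — numerically plausible, and consistent with Table~\ref{table-1}, but a new estimate that neither your sketch nor the paper provides.
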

\begin{proof}
The proof is postponed to the Appendix (Section~\ref{appendix}).
\end{proof}
 %To show this, we use computer to verify for $\theta \in \{0.501, 0.502, \ldots, 0.999\}$ and the result all holds.
 
Since $Q_2(N) = \theta > 0 = Q_2^o(N)$, Type II prefixes of length $N$ are strictly positive. By Theorem~\ref{type2}, there is a threshold $k_2(\theta)$ such that all Type II prefixes of length at most $k_2(\theta)$ are negative and all Type II prefixes of length at least $k_2(\theta)+1$ are positive. By Theorem~\ref{thetamiddle1},~\ref{thetamiddle2}, and~\ref{type1}, we know that there is another threshold $k_1(\theta) \le k_2(\theta)$ such that all Type I prefixes of length at most $k_1(\theta)$ are negative and all Type I prefixes of length at least $k_1(\theta)+1$ are positive.
 
\begin{remark}\label{theta=1/2}
We separately discuss the case $\theta = \frac{1}{2}$. By~\eqref{eq1'},~\eqref{eq2'},~\eqref{eq3'}, and~\eqref{eq4'}, we have that when $N \to \infty$ the standard numerators satisfy
$$Q_1^o(N) = 0, Q_1(N) = 0, Q_2^o(N) = 0, Q_2(N) = 1/2;$$
$$Q_1^o(N-1)= 1/2, Q_1(N-1) = 1, Q_2^o(N-1) = 1/4, Q_2(N-1) = 1/4;$$
$$Q_1^o(N-2) = 3/2, Q_1(N-2) =  3/2, Q_2^o(N-2) = 3/4, Q_2(N-2) =  1/8;$$
$$Q_1^o(N-3) = 3, Q_1(N-3) =  7/4, Q_2^o(N-3) = 3/2, Q_2(N-3) =  1/16.$$

Therefore, by Theorem~\ref{type2} we know that Type II prefixes of length at most $N-2$ are negative; that we can be indifferent (i.e., either reject or accept) to Type II prefixes of length $N-1$; that Type II prefixes of length $N$ are strictly positive; that by Theorem~\ref{type1} we know that Type I prefixes of length at most $N-3$ are negative; that we can be indifferent (i.e., either reject or accept) to Type I prefixes of length $N-2$; that Type I prefixes of length $N-1$ are strictly positive; and that we can be indifferent to Type I prefixes of length $N$.
\end{remark}

%%%%%%%%%%%%%%%%%%%%%%%%%%%%%%%%%%%%%%%%%%%%%%%%%%%%%%%%%%%%%%%%%%%%%%%%%%%%%%%%%%%%

\subsection{Optimal strategies}\label{strategymallows}
Let $\pi' = [12 \cdots (N-1)]$ and $\pi'' = [12 \cdots (N-3)(N-1)(N-2)]$.  Recall by Definition~\ref{children}, we can define $\pi'_i$ and $\pi''_i$, where $1 \le i \le N$. Furthermore, unlike in Section~\ref{sec:preliminaries} and previous subsections in Section~\ref{sec:strategies}, we now use $Q, Q^o, \bar{Q}$ to denote the original probabilities, and not only their numerators corresponding to the standard denominator. We describe an optimal strategy for each $\theta > 0$ and $N \to \infty$.

%define $\pi''$ similarly \textcolor{red}{write the formal definition, "according to the position of the last digit" is not clear enough}. 

Next, note that $c(\pi'_N) = 0$, $c(\pi'_{N-1}) = 1$, $c(\pi'_{i}) = N-i$ for $i \in \{1, \ldots, N-2\}$.  

We compare
\begin{equation}\label{qpi'}
Q(\pi') = \frac{\theta^{c(\pi'_{N})}}{\theta^{c(\pi'_1)} + \theta^{c(\pi'_2)} + \ldots + \theta^{c(\pi'_N)}} = \frac{1}{\theta^{N-1} + \theta^{N-2} + \ldots + 1}
\end{equation} 
\begin{equation}\label{qopi'}
\quad \text{ and } \quad Q^o(\pi') = \frac{\theta^{c(\pi'_{N-1})}}{\theta^{c(\pi'_1)} + \theta^{c(\pi'_2)} + \ldots + \theta^{c(\pi'_N)}} = \frac{\theta}{\theta^{N-1} + \theta^{N-2} + \ldots + 1},
\end{equation}
as well as
\begin{equation}\label{qpi''}
Q(\pi'') = \frac{\theta^{c(\pi''_1)} + \theta^{c(\pi''_2)} + \ldots + \theta^{c(\pi''_{N-2})}}{\theta^{c(\pi''_1)} + \theta^{c(\pi''_2)} + \ldots + \theta^{c(\pi''_N)}} = \frac{\theta^N + \theta^{N-1} + \ldots + \theta^3}{\theta^N + \theta^{N-1} + \ldots + \theta^3+\theta^2+\theta} 
\end{equation}
\begin{equation}\label{qopi''}
\quad \text{ and } \quad Q^o(\pi'') = \frac{\theta^{c(\pi''_{N-1})}}{\theta^{c(\pi''_1)} + \theta^{c(\pi''_2)} + \ldots + \theta^{c(\pi''_N)}} = \frac{\theta^2}{\theta^N + \theta^{N-1} + \ldots + \theta^3+\theta^2+\theta}.
\end{equation}

%\textcolor{red}{didn't we say that we will not write down the denominators if they are the same? I think here we really needed them so maybe change the statement about not writing the denominators in derivations *after the relevant probabilities* have been first introduced.}
Results from subsections~\ref{theta>1} and~\ref{theta<1} allow us to determine the winning strategies based on the probabilities $Q^o(\pi')$, $Q(\pi')$, $Q^o(\pi'')$, and $Q(\pi'')$. Note that the results in Theorems~\ref{type1},~\ref{type2},~\ref{thetalarge},~\ref{thetamiddle1}, and~\ref{thetamiddle2} still hold for the probabilities $Q^o$, $Q$, as the prefixes are of the same length and the standard denominator is positive.

\textbf{Case 1:} $\theta > 1$. By~\eqref{qpi'},~\eqref{qopi'},~\eqref{qpi''}, and~\eqref{qopi''}, we have that $Q(\pi')< Q^o(\pi')$ and $Q(\pi'') > Q^o(\pi'')$. All the Type I prefixes of length at most $N-1$ are negative by Theorem~\ref{thetalarge} and furthermore $Q(\pi'_{N}) = Q^o(\pi'_{N}) = 0$. Thus, we only need to consider Type II prefixes. By Theorem~\ref{type2}, the goal is to solve for $k_2(\theta)$ such that all the Type II prefixes of length $\leq k_2(\theta)$ are negative, and all the Type II prefixes of length greater than $k_2$ are positive. Thus, the optimal strategy in this case is to reject the first $k_2$ candidates (where $0 \le k_2 \le N-1$) and then accept the next left-to-right second-maximum thereafter. The precise parameter values are described in Section~\ref{a}.

\textbf{Case 2:} $\theta = 1$. By~\eqref{qpi'},~\eqref{qopi'},~\eqref{qpi''}, and~\eqref{qopi''}, we have $Q(\pi') = Q^o(\pi')$ and $Q(\pi'') > Q^o(\pi'')$. By Remark~\ref{theta=1} and Theorem~\ref{type1}, we need to determine a $k_1$ and a $k_2$, such that $k_1 = k_2 - 1$. The optimal strategy is to reject the first $k_1$ candidates, then be indifferent (either accept or reject) to any left-to-right maximum thereafter, and reject the $k_1+1$th candidate if it is not a left-to-right maximum and then accept the next left-to-right second-maximum. The precise parameter values are described in Remark~\ref{theta=1solve} at the end of Section~\ref{a}.

\textbf{Case 3:} $0 < \theta < 1$. By~\eqref{qpi'} and~\eqref{qopi'}, we have $Q(\pi') > Q^o(\pi')$. By~\eqref{qpi''} and~\eqref{qopi''}, we only need to compare the numerators of $Q(\pi'') $ and $Q^o(\pi'')$, i.e., $\theta^N + \theta^{N-1} + \ldots + \theta^3$ and $\theta^2$.

\textbf{Case 3.1:} $0 < \theta < \frac{1}{2}$. Then $Q(\pi'') < Q^o(\pi'')$ and $Q(\pi_{N-1}') > Q^o(\pi_{N-1}')$. By Theorem~\ref{type2}, all Type II prefixes of length at most $N-1$ are negative (even though Type II prefixes of length $N$ are positive). The best strategy is to only consider Type I prefixes and accept the last candidate no matter what, i.e., the best strategy is to reject the first $k_1$ candidates and then accept the next left-to-right maximum. If no selection is made before the last candidate, the latter is accepted. The precise parameter settings are stated Section~\ref{c1}.

\textbf{Case 3.2:} $\theta = \frac{1}{2}$. By Remark~\ref{theta=1/2}, the optimal strategy is to 1) reject all but the last three candidates; 2) if the third-last candidate is a left-to-right maximum, we can either accept or reject him/her; otherwise we reject this candidate; 3) if the second-last candidate is a left-to-right maximum, then we accept him/her. Or, if the second-last candidate is a left-to-right second-maximum, then we can either decide to accept or reject; otherwise we reject this candidate; 4) if the last candidate is a left-to-right second-maximum, we accept him/her; otherwise we can either accept or reject the candidate.

%For this case we can ignore Type II prefixes and only consider Type I prefixes, same as for Case 3.1. $Q(\pi'') < Q^o(\pi'')$ for any constant $N$ and

\textbf{Case 3.3:} $\frac{1}{2} < \theta < 1$. When $N \to \infty$ we have $Q(\pi'') > Q^o(\pi'')$. By Theorem~\ref{type2}, there is a $0\le k_2(\theta) \le N-2$ such that every Type II prefix of length at most $k_2(\theta)$ is negative and every Type II prefix of length longer than $k_2(\theta)$ is positive. We then have two cases to consider. We show that Case 3.3.1 is impossible and then focus on Case 3.3.2.

\textbf{Case 3.3.1:} $N-k_2 \not \to \infty$. Since all Type II prefixes with length $\ell$ such that $N-\ell \not \to \infty$ are positive, we know by Theorem~\ref{thetamiddle2} that every Type I prefix of length $\ell$ with $N-\ell \not \to \infty$ is also positive. Suppose now that $N- \ell \to \infty$. By Theorem~\ref{thetamiddle1}, there exists a $k_1(\theta) \ge 0$ with $N-k_1(\theta) \to \infty$ such that every Type I prefix of length at most $k_1(\theta)$ is negative and every Type I prefix of length longer than $k_1(\theta)$ is positive. In this case, the optimal strategy is a $(k_1, k_2)$-strategy, where $k_1 \le k_2$, or a $(k_2, k_1)$-strategy, where $k_2 \le k_1$. In other words, for a fixed $\frac{1}{2}<\theta<1$, there exists a pair of numbers $k_1, k_2$ such that the optimal strategy under the assumption for this case is either (1) reject the first $k_1$ candidates and then accept the next left-to-right maximum thereafter or reject the first $k_2 \ge k_1$ candidates and then accept the next left-to-right second-maximum thereafter, whichever appears first; or, (2) reject the first $k_2$ candidates and then accept the next left-to-right second-maximum thereafter or reject the first $k_1 \ge k_2$ candidates and then accept the next left-to-right maximum thereafter, whichever appears first. However, we show in Section~\ref{c3} that the optimal strategy among all $(k_1, k_2)$-strategies and $(k_2, k_1)$-strategies always arises when $N-k_1(\theta) \not \to \infty$ and $N-k_2(\theta) \not \to \infty$, which implies that Case 3.3.1 is impossible.

\textbf{Case 3.3.2:} $N - k_2 \to \infty$. Then by Theorem~\ref{thetamiddle2}, every Type I prefix of length longer than $k_2(\theta)$ is positive. Furthermore, by Theorem~\ref{type1}, since every Type II prefix of length at most $k_2(\theta)$ is negative, we conclude that there exists a $0 \le k_1(\theta) \le k_2(\theta) \le N-2$ such that every Type I prefix of length at most $k_1(\theta)$ is negative and every Type I prefix of length larger than $k_1(\theta)$ is positive. Therefore, the optimal strategy is the $(k_1(\theta), k_2(\theta))$-strategy, i.e., we reject the first $k_1(\theta)$ candidates and then accept the next left-to-right maximum thereafter or reject the first $k_2(\theta) \ge k_1(\theta)$ candidates and then accept the next left-to-right second-maximum thereafter, whichever appears first. The precise parameter settings are described in Section~\ref{c3}.
%From the results of Subsection~\ref{thetamiddle}, we know that the optimal strategy is a two threshold strategy with $k_1(\theta) \le k_2(\theta) \le N-2$, i.e., we reject the first $k_1(\theta)$ candidates and then accept the next left-to-right maximum, or reject the first $k_2(\theta) \ge k_1(\theta)$ candidates and then accept the next left-to-right second-maximum, whichever results in an earlier selection.%Therefore, we need to consider two cases, Case 3.1 or Case 3.2 described in Section~\ref{sec:strategies}. %We will show only Case 3.2 can happen in Theorem~\ref{type2first} and find the best pair $(k_2, k_1)$ in Section~\ref{c3}.

\section{Precise Parameter Settings for the Mallows Model}\label{sec:mallows}

The following result is well-known and also proved in~\cite{jones2020weighted}.

\begin{lemma} [Lemma 6.2 in~\cite{jones2020weighted},~\cite{mallows1957non}]
We have $$(P_{N})! =  \sum\limits_{\pi \in S_N} \theta^{\#\text{inversions in }\pi}.$$
\end{lemma}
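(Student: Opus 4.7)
The plan is to prove the identity by induction on $N$, exploiting the fact that inserting the largest element into a permutation of length $N-1$ contributes a controllable number of new inversions. This is a classical argument (essentially the derivation of the $q$-factorial), so I do not expect any serious obstacle; the only thing to be careful about is the bookkeeping of inversions during the insertion step.

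\textbf{Base case.} For $N=1$, $S_1$ contains only the identity permutation, which has $0$ inversions, so the left-hand side equals $\theta^0 = 1$, while $(P_1)! = P_1 = 1$.

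\textbf{Inductive step.} Assume the formula holds for $N-1$, i.e. $(P_{N-1})! = \sum_{\pi' \in S_{N-1}} \theta^{\text{inv}(\pi')}$. Every $\pi \in S_N$ arises uniquely by choosing some $\pi' \in S_{N-1}$ and inserting the element $N$ into one of the $N$ possible positions. Let $j \in \{0, 1, \ldots, N-1\}$ denote the number of entries of $\pi'$ lying to the right of the inserted copy of $N$. Since $N$ is larger than every other entry, inserting it creates exactly $j$ new inversions (one for each entry it jumps over), while leaving every inversion internal to $\pi'$ intact. Hence $\text{inv}(\pi) = \text{inv}(\pi') + j$.

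\textbf{Summation.} Summing over $\pi' \in S_{N-1}$ and $j \in \{0,1,\ldots, N-1\}$ and using the inductive hypothesis,
\begin{equation*}
\sum_{\pi \in S_N} \theta^{\text{inv}(\pi)} \;=\; \left(\sum_{\pi' \in S_{N-1}} \theta^{\text{inv}(\pi')}\right) \cdot \left(\sum_{j=0}^{N-1} \theta^{j}\right) \;=\; (P_{N-1})! \cdot P_N \;=\; (P_N)!,
\end{equation*}
which completes the induction. The only place where care is required is in verifying that the map $(\pi', j) \mapsto \pi$ is a bijection between $S_{N-1} \times \{0,\ldots,N-1\}$ and $S_N$, and in the claim that insertion contributes exactly $j$ inversions, both of which are immediate. (Alternatively, one could invoke the standard inversion-table bijection $\pi \leftrightarrow (a_1,\ldots,a_N)$ with $a_i \in \{0,\ldots,i-1\}$ and $\text{inv}(\pi)=\sum a_i$, which directly factors the generating function as $\prod_{i=1}^{N} P_i$.)
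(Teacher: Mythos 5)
Your proof is correct. The paper itself gives no argument for this lemma -- it simply cites it as well known (Lemma 6.2 of the Jones reference and Mallows) -- so there is nothing internal to compare against; your induction on $N$ via inserting the largest element, which contributes exactly $j$ new inversions when $j$ smaller entries lie to its right, is the standard derivation of the $\theta$-factorial generating function and is exactly the argument those references use. The bookkeeping (the bijection $(\pi',j)\mapsto\pi$ and the count of new inversions) is handled correctly, and the alternative via inversion tables that you mention in passing is equally valid.
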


For the set $[1, n+m],$ an ordered $2$-partition of the values into two parts $\Pi_1$ and $\Pi_2$ with $|\Pi_1| = n$ and $|\Pi_2| = m$ is a partition where all values in $\Pi_1$ are ``ahead'' of all values of $\Pi_2$, while the internal order of $\Pi_1$ and $\Pi_2$ is irrelevant. 
We define $$B(n,m):= \sum\limits_{\text{All }\Pi_1,\Pi_2 \text{ ordered partition of }[n+m]} \theta^{\# \text{crossing inversions of }(\Pi_1, \Pi_2)},$$ where a crossing inversion with respect to $(\Pi_1, \Pi_2)$ is an inversions of the form $(a, b)$ where $a \in \Pi_1$ and $b \in \Pi_2$.

\begin{lemma}
The numbers $B(n,m)$ satisfy 
\begin{equation}\label{mm}
B(n,m) = B(n-1, m) \cdot \theta^m + B(n, m-1),
\end{equation}
and 
\begin{equation}\label{nn}
B(n,m) = B(n-1, m)+ B(n, m-1) \cdot \theta^n,
\end{equation}
with the initial conditions set as $B(0, x) = 1$ and $B(x, 0) = 1$.
\end{lemma}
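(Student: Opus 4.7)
The plan is to prove both recurrences by conditioning on the part containing an extremal value of $[1,n+m]$ and using the fact that such a value creates crossing inversions with every element of the opposite part in a uniform, easily counted way. For~\eqref{mm} I will condition on the maximum $n+m$; for~\eqref{nn} I will condition on the minimum $1$. In each case deleting the extremal element reduces an ordered partition of type $(n,m)$ to one of type $(n-1,m)$ or $(n,m-1)$, and the weight picked up is a fixed power of $\theta$.

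For~\eqref{mm}, if $n+m \in \Pi_1$ then, since $n+m$ is the largest value and it lies ahead of every element of $\Pi_2$, it contributes exactly $m$ crossing inversions; deleting $n+m$ produces an ordered partition into parts of sizes $n-1$ and $m$, contributing a factor $B(n-1,m)$, for a total of $\theta^m B(n-1,m)$. If instead $n+m \in \Pi_2$, then $n+m$ is preceded in $\Pi_1$ only by smaller values, so it creates no crossing inversion, and the sum over such configurations is $B(n,m-1)$. Adding the two cases gives~\eqref{mm}. For~\eqref{nn} the symmetric argument using the minimum value $1$ applies: if $1 \in \Pi_1$ it contributes no crossing inversion (yielding $B(n-1,m)$), while if $1 \in \Pi_2$ it forms a crossing inversion with every one of the $n$ elements of $\Pi_1$ (yielding $\theta^n B(n,m-1)$).

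The one point that requires care, and the main (mild) obstacle, is the relabeling used in each reduction step. After removing the extremal element, the remaining values live on an $(n+m-1)$-element subset of $[1,n+m]$, namely $[1,n+m-1]$ or $[2,n+m]$, rather than on the canonical $[1,n+m-1]$. I will justify writing $B(n-1,m)$ or $B(n,m-1)$ for the resulting sum by observing that the number of crossing inversions depends only on the relative order of the entries, so the weighted sum is invariant under the obvious order-preserving bijection onto $[1,n+m-1]$. The boundary values $B(0,x)=B(x,0)=1$ are immediate, since when one part is empty there is a unique ordered partition and no crossing inversion can exist.
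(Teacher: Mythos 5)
Your proposal is correct and follows essentially the same argument as the paper: condition on which part contains the maximum $n+m$ for~\eqref{mm} (and the minimum $1$ for~\eqref{nn}), delete that element, and account for the $\theta^m$ (resp.\ $\theta^n$) factor from the crossing inversions it creates. The relabeling point you flag is a detail the paper leaves implicit, but your justification of it is sound.
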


\begin{proof}
To establish the first recurrence relation, we need to consider two separate cases according to the value $n+m$. 

\textbf{Case 1:} $n+m \in \Pi_1$. Then we delete $n+m$ from $\Pi_1$ and arrive at a partition of $n+m-1$ elements into subsets of size $n-1$ and $m$. The value $n+m$ contributes $\theta^{m}$ to each partition $\Pi_1, \Pi_2$. Thus, it overall contributes $\theta^m \cdot B(n-1, m)$ to the term $B(n,m)$.

\textbf{Case 2:} $n+m \in \Pi_2$. Then we delete $n+m$ from $\Pi_2$ and arrive at a partition of $n+m-1$ elements into subsets of size $n$ and $m-1$. The value $n+m$ does not feature in the multiplier and the contribution to $B(n,m)$ is $B(n, m-1)$.

Similarly, we can consider in which part the element $1$ lies in and obtain the second recurrence relation. The initial conditions are obvious since one part is empty.
\end{proof}

When $\theta = 1$, we have $$B(n, m) = B(n-1, m) + B(n, m-1).$$ 
A straightforward induction argument can be used to prove that 
$$B(n, m) = {n+m \choose n}.$$

It turns out one can also solve the above recurrence relations even when $\theta \neq 1$.

\begin{lemma}
For $\theta \neq 1$, $n, m \ge 1$,
$$B(n, m) = \frac{(1-\theta^{n+m}) \cdot (1-\theta^{n+m-1}) \cdots (1-\theta^{n+1})}{(1-\theta^m) \cdot (1-\theta^{m-1}) \cdots (1-\theta)},$$
and $B(n, 0) = B(0, m) = 1$.
\end{lemma}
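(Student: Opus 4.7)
The plan is to prove the closed form by induction on $n+m$, using the first recurrence~\eqref{mm} from the previous lemma, namely $B(n,m) = B(n-1,m)\cdot\theta^m + B(n,m-1)$. Let $F(n,m)$ denote the right-hand side of the claimed identity (it is the standard $q$-binomial coefficient $\binom{n+m}{m}_{\theta}$), so it suffices to show $F$ satisfies the same recurrence and initial conditions as $B$. The verification is a short algebraic check once a common factor is extracted.

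For the base cases, the conditions $B(n,0) = B(0,m) = 1$ are given by definition. For $m=1$, direct enumeration shows that choosing the unique element of $\Pi_2$ to be $k \in \{1,\ldots,n+1\}$ contributes $n+1-k$ crossing inversions, so
\begin{equation*}
B(n,1) \;=\; \sum_{k=0}^{n} \theta^{k} \;=\; \frac{1-\theta^{n+1}}{1-\theta} \;=\; F(n,1),
\end{equation*}
and analogously $B(1,m) = F(1,m)$ by enumerating the unique element of $\Pi_1$. (This handles the corner cases where the induction would otherwise call on $F(0,m)$ or $F(n,0)$, which are not defined by the displayed formula.)

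For the inductive step, fix $n,m \ge 2$ and assume the formula holds for all smaller pairs with both coordinates at least $1$. Extract the common factor
\begin{equation*}
C \;:=\; \frac{(1-\theta^{n+m-1})(1-\theta^{n+m-2})\cdots(1-\theta^{n+1})}{(1-\theta^m)(1-\theta^{m-1})\cdots(1-\theta)}.
\end{equation*}
A quick inspection of the numerator and denominator of $F$ gives $F(n-1,m) = C\cdot(1-\theta^n)$, $F(n,m-1) = C\cdot(1-\theta^m)$, and $F(n,m) = C\cdot(1-\theta^{n+m})$. Substituting these into~\eqref{mm}, the identity to verify collapses to
\begin{equation*}
1-\theta^{n+m} \;=\; \theta^{m}\bigl(1-\theta^{n}\bigr) + \bigl(1-\theta^{m}\bigr),
\end{equation*}
which is immediate. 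Since $B$ and $F$ satisfy the same recurrence and the same seed values, they coincide.

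There is no real obstacle here; the substantive combinatorial work was already done in establishing the recurrence~\eqref{mm} in the previous lemma, and the present statement reduces to verifying that the $q$-binomial closed form satisfies that recurrence, which is essentially the classical Pascal-type identity for Gaussian binomials.
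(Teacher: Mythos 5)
Your proof is correct, but it takes a different route from the paper's. The paper combines the two recurrences: subtracting $\theta^m$ times~\eqref{nn} from~\eqref{mm} eliminates the $B(n-1,m)$ term and yields the first-order recurrence $(1-\theta^m)\,B(n,m) = (1-\theta^{n+m})\,B(n,m-1)$, which telescopes in $m$ down to the seed $B(n,1) = \frac{1-\theta^{n+1}}{1-\theta}$ and produces the closed form directly. You instead use only~\eqref{mm} and verify by induction on $n+m$ that the $q$-binomial expression satisfies it, which after factoring out your common term $C$ reduces to the $q$-Pascal identity $1-\theta^{n+m} = \theta^m(1-\theta^n) + (1-\theta^m)$; your algebra checks out, and your explicit handling of the $m=1$ and $n=1$ base cases properly grounds the induction. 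The trade-off is that the paper's argument is a derivation (it discovers the formula by unrolling a one-variable recurrence, at the cost of invoking both~\eqref{mm} and~\eqref{nn}), whereas yours is a verification of a guessed closed form that needs only one of the two recurrences; both are complete and of comparable length.
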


\begin{proof}
We subtract from both sides in~\eqref{mm} $\theta^m \cdot$~\eqref{nn} to obtain 
$$(1-\theta^m) \cdot B(n, m) = (1-\theta^{n+m}) \cdot B(n, m-1).$$ 
Since we know that $B(n, 1) = \frac{1-\theta^{n+1}}{1-\theta}$ the claimed result follows.
\end{proof}

Note that 
$$B(n, m) = \frac{P_{n+m} \cdot \ldots \cdot P_{n+2} \cdot P_{n+1}}{P_{m} \cdot \ldots \cdot P_2 \cdot P_1} =: {P_{n+m} \choose P_{n}}. $$

\subsection{Precise Results for Case 1 (and Case 2) from Section~\ref{strategymallows}}\label{a}

In Section~\ref{a}, we use the term {\em $k$-pickable permutation} to describe a permutation which results in a pick using the strategy that rejects the first $k$ candidates and then accepts the next left-to-right second-maximum; we also use the term {\em non-$k$-pickable permutation} to refer to a permutation which is not $k$-pickable.

Define $P_{-1} = P_0 = 0$ and $(0)! = 1$. Let $$T_2(N, k):= \sum\limits_{\text{non-k-pickable permutation }\pi \in S_{N}} \theta^{\# \text{inversions in }\pi}.$$

\begin{lemma}\label{relation-1}
For $N \ge k+1$ and $k \ge 1$, 
$$T_2(N, k) = \theta^{2N-2k} \cdot P_k \cdot P_{k-1} \cdot (P_{N-2})! + \sum\limits_{i = k+1}^{N} \theta^{2N-2i} \cdot B(i-2, N-i) \cdot T_2(i-1, k) \cdot (P_{N-i})!,$$
where $T_2(k, k ) = (P_{k})!$, since no permutation in $S_k$ is $k$-pickable.
\end{lemma}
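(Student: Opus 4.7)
The proof plan is to condition on the position of the maximum $N$ in $\pi$. The key structural observation is that in every non-$k$-pickable permutation, $N-1$ must sit at a position preceding $N$'s: if $N$ were at position $i$ and $N-1$ at position $j > i$, then at position $j$ the running maximum is $N$ and the running second maximum is strictly less than $N-1$, so $\pi_j = N-1$ would be a left-to-right second maximum, contradicting non-pickability (recall that non-$k$-pickable means no position $j > k$ is a left-to-right second maximum). The same argument forces $N-1$ into the first $k$ positions whenever $N$ is there. I would therefore split non-$k$-pickable permutations into Case A (both $N$ and $N-1$ lie in the first $k$ positions) and Case B ($N$ sits at some position $i \in \{k+1, \ldots, N\}$ with $N-1$ at some position $< i$); these account for the first term and the sum in the claimed formula, respectively.

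For Case A, I would sum over the positions $i \ne j$ of $N$ and $N-1$ in $\{1, \ldots, k\}$. Non-pickability is automatic since past position $k$ the running first and second maxima are pinned at $N$ and $N-1$ while all later entries are $\le N-2$. The weighted inversion count factors as $\theta^{N-i}$ (from $N$), $\theta^{N-j-\mathbf{1}[i > j]}$ (from $N-1$), and a Mallows factor $(P_{N-2})!$ from the remaining $N-2$ entries, whose inversions depend only on their own relative order. The resulting double sum collapses to $\theta^{2N-2k} \cdot P_k \cdot P_{k-1}$ via the algebraic identity
\[
\sum_{0 \le u < v \le k-1} \theta^{u+v} \;=\; \frac{\theta \, P_k \, P_{k-1}}{1+\theta},
\]
which I would prove using $P_k^2 - \sum_{u=0}^{k-1}\theta^{2u} = 2\sum_{u < v}\theta^{u+v}$ together with the easy identity $P_k(1+\theta) - (1+\theta^k) = 2\theta P_{k-1}$.

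For Case B with $N$ at position $i$, the key reduction is that non-pickability of $\pi$ is equivalent to (i) $N-1$ lying in the first $i-1$ positions and (ii) the relabeled prefix $\pi|_{i-1} \in S_{i-1}$ being non-$k$-pickable, since the second-max condition depends only on relative ranks and positions $j > i$ are automatically safe (the running second maximum is pinned at $N-1$ while $\pi_j \le N-2$). I would then decompose the inversion count of $\pi$ into four independent pieces: $\theta^{N-i}$ for inversions caused by $N$; $T_2(i-1, k)$ for inversions inside the first $i-1$ positions (via the order-preserving bijection between arrangements of the subset $U \subset \{1, \ldots, N-1\}$ occupying those positions and permutations in $S_{i-1}$); $(P_{N-i})!$ for inversions inside the last $N-i$ positions; and the crossing inversions between the two blocks. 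The crossing factor separates cleanly as $\theta^{N-i}$ (the forced $N-i$ inversions from $N-1 \in U$ with every element of $V = \{1, \ldots, N-1\} \setminus U$) times $\sum_{(U',V)} \theta^{\mathrm{cr}(U',V)} = B(i-2, N-i)$, where $U' = U \setminus \{N-1\}$ and $(U', V)$ ranges over partitions of $\{1, \ldots, N-2\}$ into blocks of sizes $i-2$ and $N-i$. Multiplying these factors and summing over $i \in \{k+1, \ldots, N\}$ yields the second term.

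The main obstacle is the clean factorization in Case B: one must verify that the total inversion count genuinely splits across the four pieces (standard for block permutations) and, crucially, that $N-1$'s contribution to the crossing inversions can be peeled off to isolate the remaining generating function as $B(i-2, N-i)$. Once this is in place, the rest reduces to the Case A algebraic identity and routine bookkeeping.
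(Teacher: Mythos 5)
Your proposal is correct and takes essentially the same route as the paper's proof: condition on the position of the value $N$, split into the case where $N$ (and hence also $N-1$) lies in the first $k$ positions and the case where $N$ sits at a position $i\in[k+1,N]$ with $N-1$ somewhere before it, and factor the inversion generating function into the prefix contribution $T_2(i-1,k)$, the suffix contribution $(P_{N-i})!$, the crossing contribution $B(i-2,N-i)$, and the two forced factors of $\theta^{N-i}$ coming from $N$ and from $N-1$. The only blemish is your opening claim that $N-1$ must precede $N$ in \emph{every} non-$k$-pickable permutation, which is false when both lie among the first $k$ positions (e.g.\ $[4312]$ with $k=2$, $N=4$), but this is harmless: your Case~A explicitly sums over both relative orders of $N$ and $N-1$, and your algebraic identity correctly collapses that double sum to $\theta^{2N-2k}\,P_k\,P_{k-1}$, matching the paper's slightly slicker sequential-placement count.
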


\begin{proof}
We have to consider two cases depending on the value $N$. For this purpose, let $\pi \in S_N$.

\textbf{Case 1:} $N$ is at a position within $[1, k]$. Then $\pi$ is not $k$-pickable if and only if the value $N-1$ is also at a position in $[1,k]$. The remaining values form an arbitrary permutation. Thus, this case contributes 
$$\theta^{2N-2k} \cdot P_k \cdot P_{k-1} \cdot (P_{N-2})! = (\theta^{N-1} + \ldots + \theta^{N-k}) \cdot (\theta^{N-2} + \ldots + \theta^{N-k}) \cdot (P_{N-2})!$$ 
to $T_2(N, k)$. 

\textbf{Case 2:} $N$ is at a position within $i \in [k+1, N]$. Then $N-1$ must be located before position $i$ and the positions $[1, i-1]$ must form a non-$k$-pickable permutation. Thus, the contribution of this case to $T_2(N, k)$ may be computed as follows. The value $N-1$ gives a factor of $\theta^{N-i}$ for inversions with values in positions $[i+1, N]$, while the remaining values in positions $[1,i-1]$ and $[i+1, N]$ form a partition $\Pi_1, \Pi_2$ of the values $[1, N-2]$ and thus contribute a factor of $B(i-2, N-i)$ to this case. The values in positions $[1, i-1]$ form a non-$k$-pickable permutation and thus contribute $T_2(i-1, k)$. There is no restriction on the values positioned in $[i+1, N]$ and these contribute $(P_{N-i})!$. Moreover, the value $N$ contributes $\theta^{N-i}$. In conclusion, the total contribution from this case (for $k+1 \le i \le N$) equals 
$$\sum\limits_{i = k+1}^{N} \theta^{N-i} \cdot B(i-2, N-i) \cdot T_2(i-1, k) \cdot (P_{N-i})! \cdot \theta^{N-i}.$$
\end{proof}

\begin{remark}\label{t2n0}
When $k = 0$, we have $T_2(2,0) = T_2(1, 0) = 1$ and $$T_2(N,0) = \sum\limits_{i=2}^{N} \theta^{2(N-i)} \cdot \frac{(P_{N-2})!}{(P_{i-2})!} \cdot T_2(i-1, 0).$$
\end{remark}

We can solve the recurrence relation in Lemma~\ref{relation-1} in closed form.
%We also find the following results useful in our subsequent analysis.

\begin{lemma}\label{t2nk}
We have $T_2(k,k) = (P_{k})!$. For $N \ge k+1$ and $k \ge 1$,
\begin{equation}\label{t2nk-formula}
T_2(N, k) = (P_{k})! \cdot (1+\theta^2 \cdot P_{k-1}) \cdot (1+\theta^2 \cdot P_{k}) \cdot \ldots \cdot (1+\theta^2 \cdot P_{N-2}).
\end{equation}
\end{lemma}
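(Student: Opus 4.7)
The plan is to use induction on $N$, and to reduce the general recurrence of Lemma~\ref{relation-1} to the much cleaner two-term recurrence
\[
T_2(N, k) \;=\; (1 + \theta^2 P_{N-2}) \cdot T_2(N-1, k),
\]
which, together with the initial value $T_2(k, k) = (P_k)!$, immediately telescopes to the stated closed form.

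The base case $N = k+1$ is a short direct computation: the summation in Lemma~\ref{relation-1} collapses to the single term $i = k+1$, which contributes $T_2(k, k) = (P_k)!$, and the leading term contributes $\theta^2 P_k P_{k-1}(P_{k-1})! = \theta^2 P_{k-1}(P_k)!$. Summing gives $(P_k)!(1 + \theta^2 P_{k-1})$, matching the claim.

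For the inductive step, I would isolate the top term $i = N$ in the recurrence. Since $\theta^{2N-2i} = 1$, $B(N-2, 0) = 1$, and $(P_0)! = 1$, this term equals exactly $T_2(N-1, k)$. It remains to show that the initial term $\theta^{2N-2k} P_k P_{k-1}(P_{N-2})!$ together with the summands for $k+1 \le i \le N-1$ total $\theta^2 P_{N-2} \cdot T_2(N-1, k)$. To this end, I substitute the closed form $B(i-2, N-i) = (P_{N-2})!/[(P_{i-2})! (P_{N-i})!]$ to rewrite each summand as $\theta^{2N-2i}\,(P_{N-2})!\,T_2(i-1, k)/(P_{i-2})!$, and analogously each summand of $T_2(N-1, k)$ as $\theta^{2N-2-2i}\,(P_{N-3})!\,T_2(i-1, k)/(P_{i-2})!$. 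Since $(P_{N-2})! = P_{N-2}\,(P_{N-3})!$, multiplying by $\theta^2 P_{N-2}$ matches these term-by-term; the initial ``boundary'' term matches in exactly the same way because $P_k P_{k-1}(P_{N-2})! = \theta^2 P_{N-2} \cdot \theta^{-2} P_k P_{k-1}(P_{N-2})!$ aligns with its counterpart $\theta^{2N-2-2k} P_k P_{k-1}(P_{N-3})!$ after multiplication by $\theta^2 P_{N-2}$.

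The main obstacle is primarily bookkeeping: one must carefully split off the $i = N$ contribution so that the remainder admits the clean factorization by $\theta^2 P_{N-2}$, and recognize the identity $B(i-2, N-i)(P_{N-i})! = (P_{N-2})!/(P_{i-2})!$ so that the $(P_{N-i})!$ factors cancel and the dependence on $N$ enters only through a single factor of $P_{N-2}$. Once those simplifications are in place, the induction closes without any further insight, and the product formula drops out by telescoping.
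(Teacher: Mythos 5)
Your proof is correct, but it takes a genuinely different route from the paper's. The paper proves the closed form by strong induction: it substitutes the assumed closed forms of all $T_2(i-1,k)$ for $k+1 \le i \le N$ into the recurrence of Lemma~\ref{relation-1} and then sums the resulting terms from left to right, maintaining the partial-sum invariant
$\theta^{2N-2k-2(j-1)} \cdot \frac{(P_{N-2})!}{(P_{k-3+j})!} \cdot (P_{k})! \cdot (1+\theta^2 P_{k-1}) \cdots (1+\theta^2 P_{k-3+j})$
after $j$ terms, which telescopes to the product. You instead never substitute the closed form at all in the main step: you write down Lemma~\ref{relation-1} at both $N$ and $N-1$, peel off the $i=N$ summand (which is exactly $T_2(N-1,k)$), and match the remaining boundary term and summands $k+1 \le i \le N-1$ one-for-one against $\theta^2 P_{N-2}$ times the corresponding terms of the $N-1$ recurrence, using $B(i-2,N-i)\,(P_{N-i})! = (P_{N-2})!/(P_{i-2})!$ and $(P_{N-2})! = P_{N-2}\,(P_{N-3})!$. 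This yields the first-order recurrence $T_2(N,k) = (1+\theta^2 P_{N-2})\, T_2(N-1,k)$ directly, after which the product formula is immediate; your base case $N=k+1$ and all the individual identities check out. What your route buys is that the only nontrivial verification is a transparent term-by-term correspondence between two instances of the same recurrence, rather than guessing and confirming a running partial-sum pattern (the part the paper labels ``details are omitted''); what the paper's route buys is nothing extra here, so yours is arguably the cleaner argument. One cosmetic remark: you advertise the argument as induction on $N$, but your reduction to the two-term recurrence uses only Lemma~\ref{relation-1} at consecutive levels and not the inductive hypothesis on the closed form, so the ``induction'' is really just the final telescoping.
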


\begin{proof}
We know from Lemma~\ref{relation-1} that $T_2(k,k) = (P_{k})!$. %For $N \ge k+1$ \textcolor{red}{this may be confusing, I corrected it above but it may be elsewhere, you are really talking about the position of N, not the actual value being in the set}, 
We assume the argument is valid for at most $N-1$, and then prove it for $N$.

Again, by Lemma~\ref{relation-1}, we know 
$$T_2(N, k) = \theta^{2N-2k} \cdot P_{k} \cdot P_{k-1} \cdot (P_{N-2})! + \sum\limits_{i = k+1}^{N} \theta^{2N-2i} \cdot B(i-2, N-i) \cdot T_2(i-1, k) \cdot (P_{N-i})!$$

$$= \theta^{2N-2k} \cdot P_{k} \cdot P_{k-1} \cdot (P_{N-2})! + \theta^{2N-2k-2} \cdot \frac{(P_{N-2})!}{(P_{k-1})!} \cdot (P_{k})! + \theta^{2N-2k-4} \cdot \frac{(P_{N-2})!}{(P_{k})!} \cdot (P_{k})! \cdot (1+\theta^2\, P_{k-1})$$
$$+ \ldots + \theta^2 \cdot \frac{(P_{N-2})!}{(P_{N-3})!} \cdot (P_{k})! \cdot (1+\theta^2\, P_{k-1})(1+\theta^2\, P_k) \cdots (1+\theta^2\, P_{N-4}) + (P_{k})! \cdot (1+\theta^2\, P_{k-1})(1+\theta^2\, P_k) \cdots (1+\theta^2\, P_{N-3}).$$

Note that if we add the terms one-by-one, then the first $j$ terms we arrive at are
$$\theta^{2N-2k-2(j-1)} \cdot \frac{(P_{N-2})!}{(P_{k-3+j})!} \cdot (P_{k})! \cdot (1+P_{k-1} \cdot \theta^2) \cdots (1+P_{k-3+j} \cdot \theta^2).$$  
Hence, we obtain 
$$(P_{k})! \cdot (1+P_{k-1} \cdot \theta^2)(1+P_{k} \cdot \theta^2) \cdots (1+P_{N-2} \cdot \theta^2).$$
\end{proof}

\begin{remark}\label{t2n0-formula}
%We have $T_2(1,0) = T_2(2,0) = 1$. 
For $k = 0$ and $N \ge 3$, we can solve by Remark~\ref{t2n0} that $$T_2(N, 0) = (1+\theta^2)(1+\theta^2 \cdot P_2) \cdots (1+\theta^2 \cdot P_{N-2}),$$
which agrees with the formula obtained by plugging in $k=0$ to~\eqref{t2nk-formula}.
\end{remark}

Next, we introduce the notion of a {\em $k$-winnable permutation}, corresponding to a permutation such that the global second-best candidate ($N-1$) can be identified using the positional strategy that rejects the first $k$ candidates and accepts the next left-to-right second-maximum thereafter. 

To this end, we define 
$$W_2(N,k) = \sum\limits_{k\text{-winnable }\pi \in S_N} \theta^{\#\text{inversions in }\pi}.$$

\begin{theorem}\label{relation-final}
One has
$$W_2(N, k) = \theta^2 \cdot P_{N-2} \cdot W_2(N-1, k) + \theta \cdot T_2(N-1, k),$$ 
with the initial condition $W_2(k+1, k) = \theta \cdot (P_{k})!.$
\end{theorem}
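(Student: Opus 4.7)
The plan is to prove the recurrence by partitioning the $k$-winnable permutations $\pi \in S_N$ into two cases based on the position of the value $N-1$, and to track the $\theta^{\text{inv}}$ weight via a removal/relabeling argument that maps each permutation to an object counted by $W_2(N-1,k)$ or $T_2(N-1,k)$. The initial condition will be obtained by a direct inversion count.

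For $W_2(k+1,k) = \theta(P_k)!$, I would observe that a $k$-winnable $\pi \in S_{k+1}$ must place the value $k=N-1$ at position $k+1$ and the value $k+1=N$ at some position $i \in [1,k]$, with the remaining $k-1$ values filling the other positions arbitrarily. Conditioning on $i$, the inversions split into: the inversion $(k+1,k)$; the $k-i$ inversions between $k+1$ at position $i$ and the smaller values in positions $i+1,\ldots,k$; and the internal inversions of the $k-1$ small values summing to $(P_{k-1})!$. This yields $\theta \cdot \theta^{k-i}(P_{k-1})!$ for each $i$, and summing over $i=1,\ldots,k$ gives $\theta P_k(P_{k-1})! = \theta(P_k)!$.

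For the recurrence, in Case A the value $N-1$ occupies position $N$. Winnability requires $N$ at some position in $[1,N-1]$ and no LR-second-maximum at any position in $[k+1,N-1]$. Applying the order-preserving relabeling $\{1,\ldots,N-2,N\}\to\{1,\ldots,N-1\}$ (sending $N$ to $N-1$) to the first $N-1$ positions produces $\tau \in S_{N-1}$ that is exactly non-$k$-pickable, and the only inversion of $\pi$ not present in $\tau$ is the pair $(N,N-1)$; hence $\text{inv}(\pi)=\text{inv}(\tau)+1$, contributing $\theta\cdot T_2(N-1,k)$. In Case B the value $N-1$ sits at some position $p<N$; winnability forces $N$ at a position $<p<N$, so the value $v$ at position $N$ satisfies $v\in\{1,\ldots,N-2\}$. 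Apply the order-preserving bijection $\{1,\ldots,N\}\setminus\{v\}\to\{1,\ldots,N-1\}$ to the first $N-1$ positions; the image of $N-1$ becomes $N-2$ (the new second-best) at position $p$, the image of $N$ becomes $N-1$ at a position $<p$, and the LR-second-max structure in positions $1,\ldots,N-1$ is preserved, so the resulting $\sigma \in S_{N-1}$ is $k$-winnable. The only new inversions are those between position $N$ and the $N-v$ larger values in positions $1,\ldots,N-1$, contributing a factor $\theta^{N-v}$. Summing over $v=1,\ldots,N-2$ and over $k$-winnable $\sigma$ gives $(\theta^2+\theta^3+\cdots+\theta^{N-1})W_2(N-1,k) = \theta^2 P_{N-2} W_2(N-1,k)$. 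Cases A and B are disjoint and exhaustive, so adding them yields the claim.

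The main obstacle is verifying that the Case-B correspondence is a genuine bijection between $k$-winnable $\pi \in S_N$ (with $N-1$ at position $<N$) and pairs $(\sigma,v)$ with $\sigma \in S_{N-1}$ $k$-winnable and $v\in[1,N-2]$. Specifically, one must check that the LR-second-max condition transfers cleanly in both directions and that $k$-winnability (i.e., that position $p$ is the \emph{first} LR-second-max after position $k$, not merely some LR-second-max) is preserved under the relabeling. Once this bijection and the analogous non-pickable identification in Case A are in hand, the inversion bookkeeping is a routine accounting of the inversions created by the removed element.
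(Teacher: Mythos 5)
Your proposal is correct and follows essentially the same route as the paper: conditioning on whether the value in the last position is $N-1$ (yielding the $\theta\cdot T_2(N-1,k)$ term), is some $v\in\{1,\dots,N-2\}$ (yielding $\theta^2 P_{N-2}\cdot W_2(N-1,k)$ after summing $\theta^{N-v}$), or is $N$ (impossible for a winnable permutation), together with a direct count for $W_2(k+1,k)$. The only difference is that you spell out the order-preserving relabeling and the preservation of the left-to-right second-maximum structure in detail, whereas the paper treats these identifications as immediate.
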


\begin{proof}
If the last position has the value $N$, then the permutation cannot be $k$-winnable as the value $N-1$ is never going to be picked as a left-to-right second-maximum. Thus, we have two possible scenarios for a $k$-winnable permutation $\pi \in S_N$.

\textbf{Case 1:} The last position contains one of the values $i = 1, 2, \ldots, N-2$. Then it contributes $N-i$ to the inversion count and we may view the remaining values as some $k$-winnable $\tilde{\pi} \in S_{N-1}$. These contribute $W_2(N-1, k) \cdot (\theta^{N-1}+\theta^{N-2}+ \cdots + \theta^2) = \theta^2 \cdot P_{N-2} \cdot W_2(N-1, k)$ to $W_2(N,k)$.

\textbf{Case 2:} The last position is $N-1$. Then the first $N-1$ positions form a non-$k$-pickable permutation. The value $N-1$ at the position $N$ contributes $\theta$.

The initial condition holds because when there are in total $k+1$ positions then the $(k+1)^{\text{th}}$ position must be $k$ and the elements in positions $[1, k]$ can represent any permutation in $S_{k}$.
\end{proof}

\begin{theorem}\label{formula}
For $k \ge 1$, we have 
$$W_2(N, k) = \theta \cdot T_2(N, k) - \theta^{2N-2k+1} \cdot (P_{N-2})! \cdot P_k \cdot P_{k-1}$$ 
$$= \theta \cdot (P_{k})! \cdot \{(1+\theta^2 \cdot P_{N-2})(1+\theta^2 \cdot P_{N-3}) \cdots (1+\theta^2 \cdot P_k) \cdot (1+\theta^2 \cdot P_{k-1}) - \theta^{2N-2k} \cdot P_{N-2} \cdot P_{N-3} \cdots P_{k-1}\}.$$
\end{theorem}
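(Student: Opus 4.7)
The plan is to prove the first identity
\[
W_2(N,k) \;=\; \theta\cdot T_2(N,k) \;-\; \theta^{2N-2k+1}\,(P_{N-2})!\,P_k\,P_{k-1}
\]
by induction on $N \ge k+1$, and then derive the explicit product form as an immediate corollary by inserting the closed-form expression for $T_2(N,k)$ established just before. The two ingredients are the recurrence from Theorem~\ref{relation-final}, namely $W_2(N,k) = \theta^2 P_{N-2}\,W_2(N-1,k) + \theta\,T_2(N-1,k)$ with $W_2(k+1,k) = \theta\,(P_k)!$, together with the one-step recurrence $T_2(N,k) = (1 + \theta^2 P_{N-2})\,T_2(N-1,k)$ that falls out of the product formula for $T_2$. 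This second recurrence is the algebraic glue that will let the two terms coming from the induction hypothesis fuse into a single $\theta\,T_2(N,k)$.

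For the base case $N = k+1$, I would simply substitute $T_2(k+1,k) = (P_k)!\,(1+\theta^2 P_{k-1})$ into the right-hand side and observe that $\theta\,T_2(k+1,k) - \theta^{3}\,(P_{k-1})!\,P_k\,P_{k-1}$ collapses to $\theta\,(P_k)!$, using only $(P_k)! = P_k (P_{k-1})!$; this matches the initial value in Theorem~\ref{relation-final}.

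For the inductive step, assume the formula holds at $N-1$, so $W_2(N-1,k) = \theta\,T_2(N-1,k) - \theta^{2N-2k-1}\,(P_{N-3})!\,P_k\,P_{k-1}$. Substituting into the $W_2$-recurrence yields
\[
W_2(N,k) \;=\; \theta\,T_2(N-1,k)\,(1+\theta^2 P_{N-2}) \;-\; \theta^{2N-2k+1}\,P_{N-2}(P_{N-3})!\,P_k\,P_{k-1}.
\]
The first factor simplifies to $\theta\,T_2(N,k)$ by the one-step recurrence for $T_2$, and the second uses $P_{N-2}\,(P_{N-3})! = (P_{N-2})!$, completing the induction. The second equality in the theorem then follows by inserting the product formula $T_2(N,k) = (P_k)!\prod_{j=k-1}^{N-2}(1+\theta^2 P_j)$ and pulling $\theta\,(P_k)!$ out as a common factor, noting that $(P_{N-2})!\,P_k\,P_{k-1}/(P_k)! = P_{N-2}P_{N-3}\cdots P_{k-1}$.

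There is no real obstacle here; the argument is purely algebraic manipulation. The only point requiring care is bookkeeping of the $\theta$ exponent: the offset $2N-2k+1$ shifts by $2$ (not $1$) when $N$ advances, which is precisely compensated by the factor $\theta^2 P_{N-2}$ in the $W_2$-recurrence together with the conversion $(P_{N-3})! \mapsto (P_{N-2})!$.
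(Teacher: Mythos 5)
Your proof is correct, and it reaches the result by a slightly different mechanism than the paper. The paper unrolls the recurrence of Theorem~\ref{relation-final} directly: it writes out the relations for $W_2(N,k), W_2(N-1,k), \ldots, W_2(k+2,k)$, multiplies relation $j$ by the weight $\theta^{2j-2}\,P_{N-2}P_{N-3}\cdots P_{N-j}$, and sums, using the initial condition $W_2(k+1,k)=\theta\,(P_k)!$ to collapse the telescoping sum into the product form. You instead first isolate the identity $W_2(N,k)=\theta\,T_2(N,k)-\theta^{2N-2k+1}(P_{N-2})!\,P_k P_{k-1}$ and verify it by induction on $N$, with the one-step multiplicative recurrence $T_2(N,k)=(1+\theta^2 P_{N-2})\,T_2(N-1,k)$ (read off from the closed form of $T_2$) doing the work of fusing the two terms $\theta^3 P_{N-2}T_2(N-1,k)+\theta\,T_2(N-1,k)$ into $\theta\,T_2(N,k)$; the product form then falls out by substitution. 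The two arguments are equivalent in substance --- induction is the recursive form of the weighted telescoping sum --- but yours is cleaner to check: the base case $\theta\,(P_k)!(1+\theta^2 P_{k-1})-\theta^3(P_{k-1})!\,P_kP_{k-1}=\theta\,(P_k)!$ and the exponent bookkeeping ($2N-2k+1$ advancing by $2$ against the factor $\theta^2 P_{N-2}$ and the promotion $(P_{N-3})!\mapsto(P_{N-2})!$) are exactly the right things to verify, and you verify them. The paper's route has the mild advantage of \emph{deriving} the formula rather than verifying a guessed form, but since the first identity is stated in the theorem anyway, nothing is lost.
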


\begin{proof}
By Theorem~\ref{relation-final}, we have the recurrence relation for $$W_2(N,k) \text{ (relation 1)}, W_2(N-1,k)\text{ (relation 2)}, \ldots, W_2(k+2, k) \text{ (relation N-k-1)}.$$ 

Then, we multiply relation 1 with $1$, relation 2 with $\theta^2 \cdot P_{N-2}$, relation j with $\theta^{2j-2} \cdot P_{N-2} \cdot P_{N-3} \cdots P_{N-j}$, $j \in \{3, \ldots, N-k-1\}$. Then we add those equations and use the initial condition $W_2(k+1, k) = \theta \cdot (P_{k})!$ to obtain the desired formula.
\end{proof}

\begin{remark}
For $k=0$, we have $$W_2(N, 0) = \theta \cdot T_2(N, 0).$$ Since the strategy of rejecting no candidate in the beginning and then accepting the next left-to-right second-maximum is the same as rejecting the first candidate and then accepting the next left-to-right second-maximum, the case when $k=0$ is going to be included in the case when $k=1$.
\end{remark}

%\textcolor{red}{This theorem is incomplete, probably meant to fill it out later?}
\begin{theorem}
When $\theta>1$ and $N \to \infty$, the optimal strategy is to reject the first $j=k(\theta)$ candidates, where $k(\theta)$ is a function of $\theta$ that does not depend on $N$, and then select the next left-to-right second-maximum thereafter. 
\end{theorem}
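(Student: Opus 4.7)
The plan is to compute the discrete derivative $W_2(N,k+1)-W_2(N,k)$ directly from the closed form in Theorem~\ref{formula}, show that its sign in $k$ is governed by a single monotone quantity $\rho_k(N)$, and verify that $\rho_k(N)$ admits a well-defined limit $\rho_k^{\infty}$ whose unique crossover with $\theta/(1+\theta)$ yields a threshold depending only on $\theta$.

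First I would use Theorem~\ref{formula} together with the identity $P_{k+1} = 1 + \theta + \theta^2\,P_{k-1}$ (which collapses $P_{k+1} - (1+\theta^2 P_{k-1}) = \theta$ and $P_{k+1} - \theta^2 P_{k-1} = 1+\theta$) to obtain
\begin{equation*}
W_2(N,k+1) - W_2(N,k) = \theta\,(P_k)!\,\Bigl[\,\theta\prod_{j=k}^{N-2}(1+\theta^2 P_j) \;-\; (1+\theta)\,\theta^{2(N-k-1)}\prod_{j=k}^{N-2}P_j\,\Bigr].
\end{equation*}
Dividing by the positive factor $\theta\,(P_k)!\,\prod_{j=k}^{N-2}(1+\theta^2 P_j)$, the sign of $W_2(N,k+1)-W_2(N,k)$ agrees with the sign of $\theta - (1+\theta)\,\rho_k(N)$, where
\begin{equation*}
\rho_k(N) \;:=\; \prod_{j=k}^{N-2}\frac{\theta^2 P_j}{1+\theta^2 P_j}.
\end{equation*}
Each factor lies in $[0,1)$, so $\rho_k(N)$ is strictly increasing in $k$ for fixed $N$ and strictly decreasing in $N$ for fixed $k$.

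Next I would show that $\rho_k(N)$ converges to a limit $\rho_k^{\infty}\in[0,1)$ as $N\to\infty$ depending only on $k$ and $\theta$. For $\theta>1$ one has $P_j = (\theta^j-1)/(\theta-1)$, so $1/(1+\theta^2 P_j) = O(\theta^{-j})$; hence $\sum_{j\ge k}\log\bigl(1 - 1/(1+\theta^2 P_j)\bigr)$ converges absolutely and $\rho_k^{\infty}\in(0,1)$ for every $k\ge 1$. Moreover $\rho_0^{\infty}=0$ (because the $j=0$ factor vanishes since $P_0=0$), $\rho_k^{\infty}$ is strictly increasing in $k$, and $\rho_k^{\infty}\to 1$ as $k\to\infty$. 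Consequently there is a unique smallest integer
\begin{equation*}
k(\theta) \;:=\; \min\bigl\{k\ge 0 \;:\; \rho_k^{\infty} \ge \tfrac{\theta}{1+\theta}\bigr\},
\end{equation*}
which depends only on $\theta$.

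Finally, the monotonicity of $\rho_k(N)$ in $k$ together with the sign characterization above makes $k\mapsto W_2(N,k)/(P_N)!$ unimodal: strictly increasing while $\rho_k(N)<\theta/(1+\theta)$ and strictly decreasing thereafter. Its maximizer is therefore the smallest $k$ with $\rho_k(N)\ge\theta/(1+\theta)$, and by pointwise convergence $\rho_k(N)\to\rho_k^{\infty}$ on every fixed finite range of $k$, this maximizer equals $k(\theta)$ for all $N$ large enough; this is also consistent with Theorem~\ref{type2}, which already guaranteed the existence of a single threshold governing when type II prefixes become positive. The main obstacle will be verifying that $\rho_k^{\infty}$ is indeed strictly between $0$ and $1$ for $k\ge 1$: each factor tends to $1$ while the number of factors tends to infinity, so convergence must be extracted from the explicit geometric decay of $1/(1+\theta^2 P_j)$ that holds precisely because $\theta>1$.
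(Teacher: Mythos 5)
Your proposal is correct, and it takes a genuinely different route from the paper's. The paper divides the closed form of Theorem~\ref{formula} by $(P_N)!$ to obtain $\frac{W_2(N,k)}{(P_N)!}=\theta\prod_{j=k+1}^{N}\bigl(1-\frac{\theta(\theta-1)}{\theta^{j}-1}\bigr)-\theta^{2N-2k+1}\frac{(\theta^k-1)(\theta^{k-1}-1)}{(\theta^N-1)(\theta^{N-1}-1)}$ and then runs a two-case limit analysis on sequences $k=k(N)$: if $k\to\infty$ the probability tends to $0$, while for fixed $k$ it converges (absolute convergence of the infinite product being checked via the integral test for $\sum_j 1/(\theta^j-1)$) to a positive limit depending only on $k$ and $\theta$; it then concludes the optimum occurs at some fixed $k(\theta)$. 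You instead difference the closed form in $k$; I verified your telescoping identity using $P_{k+1}-(1+\theta^2P_{k-1})=\theta$ and $P_{k+1}-\theta^2P_{k-1}=1+\theta$, and it is correct, as is the resulting sign criterion $\operatorname{sgn}\bigl(W_2(N,k+1)-W_2(N,k)\bigr)=\operatorname{sgn}\bigl(\theta-(1+\theta)\rho_k(N)\bigr)$. This buys you something the paper never establishes: exact unimodality of $k\mapsto W_2(N,k)$ for every finite $N$, plus the monotone convergence $\rho_k(N)\downarrow\rho_k^{\infty}$ (decreasing in $N$, increasing in $k$, with $\rho_k^{\infty}\to 1$), which together pin the maximizer down as the crossover index of $\rho_k^{\infty}$ with $\theta/(1+\theta)$ and show it is \emph{eventually constant} in $N$. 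That closes a small logical gap in the paper's version, which rules out $k\to\infty$ and handles fixed $k$ but does not formally exclude a maximizer that drifts with $N$ before stabilizing. What your version does not deliver, and the paper's does, is the explicit limiting success probability $\theta\prod_{j=k+1}^{\infty}\bigl(1-\frac{\theta(\theta-1)}{\theta^{j}-1}\bigr)-\theta\bigl(1-\theta^{-k}\bigr)\bigl(1-\theta^{-(k-1)}\bigr)$ that feeds the numerical tables; the two computations are complementary and could be combined.
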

Numerical results for $k(\theta)$ are provided after the proof. 

\begin{proof}
By simplifying the result of Theorem~\ref{formula}, we have 
$$\frac{W_2(N,k)}{(P_{N})!} = \theta \cdot (1- \frac{\theta (\theta - 1)}{\theta^N - 1}) (1- \frac{\theta (\theta-1)}{\theta^{N-1}-1}) \cdots (1- \frac{\theta (\theta-1)}{\theta^{k+1}-1}) - \theta^{2N-2k+1} \cdot \frac{(\theta^k-1)(\theta^{k-1}-1)}{(\theta^N-1)(\theta^{N-1}-1)}.$$

\textbf{Case 1:} $k \to \infty, N \to \infty.$ Then, since $\theta>1$, both the first and second term converge to $\theta$. Thus the limit is $0$.

\textbf{Case 2:} $k \not\to \infty, N \to \infty.$ Then, the second term converges to $\theta \cdot (1-\frac{1}{\theta^k})(1-\frac{1}{\theta^{k-1}})$. The first term converges since $\prod\limits_{j=k+1}^{\infty} (1-\frac{\theta (\theta - 1)}{\theta^j-1})$ converges if and only if $\sum\limits_{j=k+1}^{\infty} \frac{1}{\theta^j-1}$ converges; the latter converges because of the integral test. Thus, the optimal asymptotic probability will occur for some fixed $k(\theta)$.
\end{proof}

Although the infinite product always converges, finding an explicit formula for the probability is hard. Thus, we instead provide some numerical results in Table~\ref{table-2}.

\begin{figure}
\begin{center}
  \includegraphics[scale=0.535]{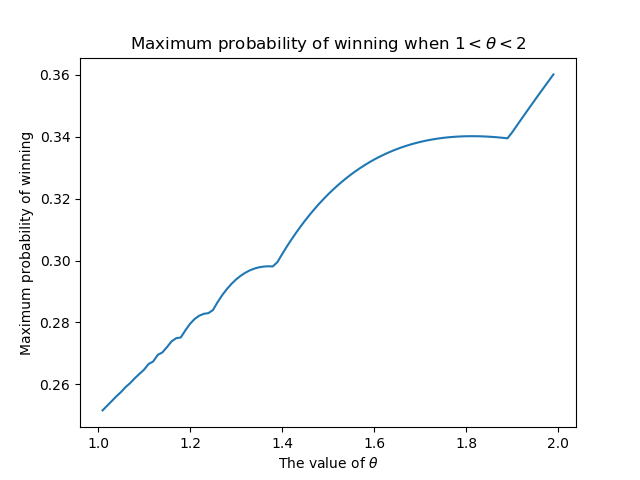}
   \includegraphics[scale=0.535]{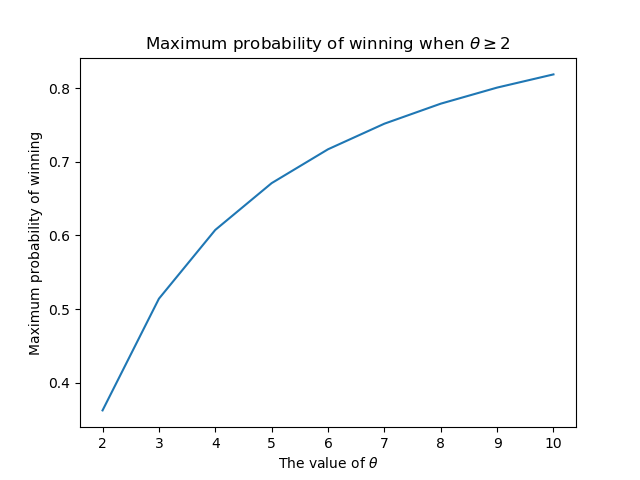}
  \caption{Probability of winning using the optimal strategy for a Mallows distribution with $\theta>1$.}\label{best-1}
\end{center}
\end{figure}
%\textcolor{red}{Can we change theta to $\theta$ in the figure?}
\begin{table}
\begin{center}
\begin{tabular}{|c|c|c|c|c|c|c|c|}
\hline
$\theta$ & reject first $k$  & max probability & $\theta$ & $k$  & max prob  \\ \hline
1.01& 69 &0.25154698   & 1.6 & 2 & 0.33261548    \\ \hline
1.02& 35 &0.25304761   &  1.7&  2 & 0.33832874   \\ \hline
1.03& 24 &0.25456399   &  1.8&  2& 0.34018156   \\ \hline
1.04& 18&0.25609089    &  1.9&  1& 0.34138762  \\ \hline
1.05& 15&0.25746213    &  2&  1& 0.36219565\\ \hline
1.06&12 &0.25906545    &  3&  1& 0.51401101\\ \hline
1.07& 11&0.26037841    &  4&  1& 0.6075226 \\ \hline
1.08& 9&0.26193451     &  5&  1&  0.67111688\\ \hline
1.09& 8&0.26332955     & 6 & 1&  0.71712202\\ \hline
1.10& 8& 0.26468079   & 7&  1& 0.75191395\\ \hline
1.2 &  4 & 0.27951623  &  8 & 1&  0.77912838\\ \hline
1.3 & 3 & 0.29385177   &9&  1& 0.80098779\\ \hline
1.4 & 2& 0.30199267  & 10&  1& 0.81892569  \\ \hline
1.5 & 2& 0.32134993  &     &    &   \\ \hline
\end{tabular}
\end{center}
\caption{Maximum probabilities and optimal strategies for $\theta > 1$.}\label{table-2}
\end{table}

Figure~\ref{best-1} and Table~\ref{table-2} show the optimal success probabilities for various values of $\theta>1$. The maximum winning probability converges to $0.25$ as $\theta \to 1+$, which matches the well known result for the optimal probability $0.25$ when $\theta = 1$.

Note that $k=1$ is optimal for $\theta \ge 1.892$ (approximately), $k = 2$ is optimal for $1.385 \le \theta \le 1.891$ (approximately), and $k=3$ is optimal for $1.247 \le \theta \le 1.384$ (approximately).

The winning probability is increasing and converging to $1$ as $\theta$ increases, when $k=1$ is the optimal; the winning probability is maximized at $(\theta, p) = (1.81, 0.340203)$ when $k=2$ is optimal; the winning probability is maximized at $(\theta, p) = (1.37, 0.298144)$ when $k=3$ is optimal (See Figure~\ref{best-2}). 

Intuitively, we have that the Mallows distribution becomes highly concentrated around the permutation $[N(N-1)\ldots 21]$ when $\theta$ increases, and thus rejecting the first candidate and accepting the next left-to-right second-maximum will capture the value $(N-1)$ most of the times (the probability tends to $1$ as $\theta \to \infty$). However, for $k=2$ and $k=3$, since the distribution concentrates around the permutation $[N(N-1)\ldots 21]$ as $\theta \to \infty$, rejecting the first two or three candidates, respectively, and then accepting the next left-to-right second-maximum is increasingly unlikely to capture the value $N-1$.

\begin{figure}
\begin{center}
  \includegraphics[scale=0.35]{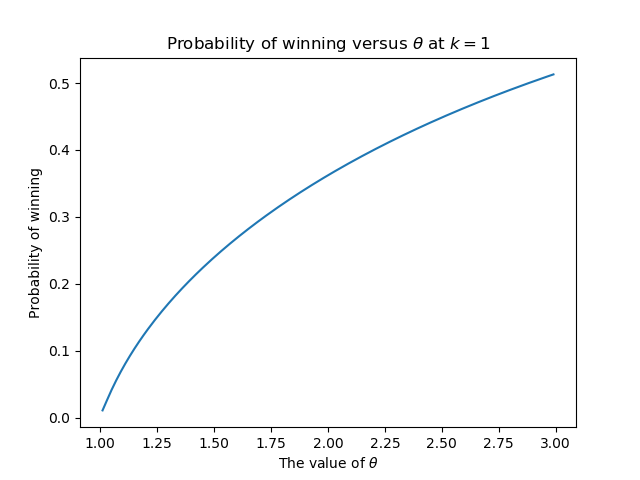}
  \includegraphics[scale=0.35]{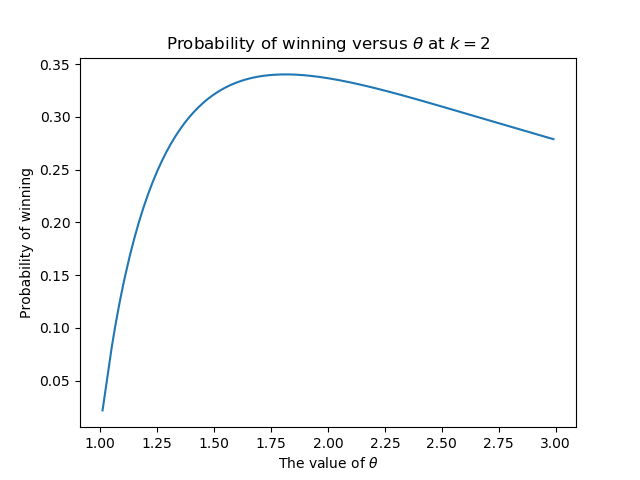}
  \includegraphics[scale=0.35]{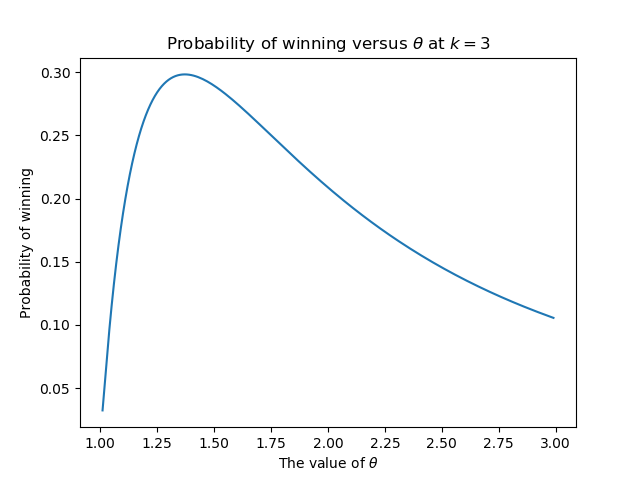}
  \caption{Probability of winning for $\theta > 1$ when we use the strategy of rejecting the first $k$ candidates and then accepting the next second-maximum thereafter, where $k=1,2$ and $3$.}\label{best-2}
\end{center}
\end{figure}

\begin{remark}\label{theta=1solve}
When $\theta = 1$, for $k \ge 1$ we have by Theorem~\ref{formula} 
$$\frac{W_2(N,k)}{N!} = \frac{k}{N} - \frac{k(k-1)}{N(N-1)} = \frac{k (N-k)}{N(N-1)} \text{ and } =1/N \text{ for } k=0.$$ Therefore, the maximum probability of winning is $\frac{N^2}{4N(N-1)} \to 1/4$ and is realized at $k = N/2$. The optimal strategy is to 1) reject the first $\frac{N}{2}-1$ candidates; 2) accept or reject the $\frac{N}{2}$th candidate if it is a left-to-right maximum; reject this candidate otherwise; 3) for a candidate $j > N/2$ we either accept him/her if the candidate is a left-to-right second-maximum; or, we accept or reject the candidate if he/she is a left-to-right maximum; otherwise, we reject the candidate.
\end{remark}

\subsection{Precise result for Case 3.1 (and Case 3.2)}\label{c1}

Unlike in the previous section, in Section~\ref{c1} we use the term {\em $k$-pickable permutation} for a permutation that corresponds to a strategy that rejects the first $k$ candidates and then accepts the next left-to-right maximum and results in one pick. We also use the term {\em non-$k$-pickable permutation} to describe a permutation which is not $k$-pickable. 

Let 
$$T_1(N, k):= \sum\limits_{\text{non-k-pickable permutation }\pi \in S_{N}} \theta^{\# \text{inversions in }\pi}.$$

\begin{lemma}
We have $T_1(N,0) = 0$ and for $k \ge 1$, $$T_1(N,k)=(\theta^{N-1}+ \ldots+\theta^{N-k}) \cdot (P_{N-1})!.$$
\end{lemma}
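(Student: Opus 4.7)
The plan is to give a direct combinatorial characterization of non-$k$-pickable permutations and then sum inversion weights by conditioning on the location of the value $N$.

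First, for $k=0$, note that the first candidate is vacuously a left-to-right maximum (there are no earlier candidates to compare against). Hence the strategy ``reject the first $0$ candidates, then accept the next left-to-right maximum'' always picks the first candidate, so every $\pi \in S_N$ is $0$-pickable and $T_1(N,0)=0$.

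For $k \ge 1$, I would establish the following characterization: a permutation $\pi \in S_N$ is non-$k$-pickable if and only if the value $N$ lies in one of the first $k$ positions of $\pi$. The forward direction is the key observation: if $N$ sits at some position $j > k$, then $\pi_j = N$ is automatically a left-to-right maximum at a position strictly after $k$, so the strategy picks it. Conversely, if $N$ is at a position $j \le k$, then every entry at a position $i > k$ is less than $N$ which appears earlier, so no entry after position $k$ can be a left-to-right maximum, making $\pi$ non-$k$-pickable.

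Given this characterization, I would compute $T_1(N,k)$ by summing over the position $i \in [1,k]$ of the value $N$. Placing $N$ at position $i$ creates exactly $N-i$ inversions (one with each of the $N-i$ entries to its right) and is independent of the relative arrangement of the remaining values; those $N-1$ remaining values may be placed in the $N-1$ remaining positions as an arbitrary permutation, contributing $(P_{N-1})!$ by the Mallows identity cited just before this lemma. Hence
\begin{equation*}
T_1(N,k) \;=\; \sum_{i=1}^{k} \theta^{\,N-i} \cdot (P_{N-1})! \;=\; \bigl(\theta^{N-1}+\theta^{N-2}+\cdots+\theta^{N-k}\bigr)\,(P_{N-1})!,
\end{equation*}
which matches the claimed expression.

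There is no real obstacle here beyond verifying the ``iff'' in the characterization; the rest is a one-line weighted count using $(P_{N-1})! = \sum_{\sigma \in S_{N-1}} \theta^{\#\text{inversions}(\sigma)}$. This is considerably simpler than the analogous computation $T_2(N,k)$ for left-to-right second maxima, since here only the position of a single value (namely $N$) determines pickability, whereas for the second-maximum variant one needed to track the positions of both $N$ and $N-1$ and unroll a recurrence.
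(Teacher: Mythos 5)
Your proof is correct and follows essentially the same route as the paper: both arguments reduce to the observation that $\pi$ is non-$k$-pickable precisely when the value $N$ occupies one of the first $k$ positions, and then sum $\theta^{N-i}$ over those positions against the factor $(P_{N-1})!$ for the arbitrary arrangement of the remaining values. Your version merely makes the ``if and only if'' and the $k=0$ base case more explicit than the paper does.
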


\begin{proof}
Let $\pi\in S_N$ be non-$k$-pickable. If the value $N$ is positioned in $[k+1, N]$, then we must have one pick. Thus, the value $N$ is positioned in $[1,k]$ and the other positions can be viewed as an arbitrary permutation. If the value $N$ is at position $i\in [1,k]$, it contributes $\theta^{N-i},$ and the remaining terms contribute $(P_{N-1})!$.
\end{proof}

In this subsection, by a {\em $k$-winnable permutation} we mean a permutation such that the global second-best candidate ($N-1$) can be identified using the positional strategy that rejects the first $k$ candidates and accepts the next left-to-right maximum thereafter. We define 
$$W_1^*(N, k) = \sum\limits_{\text{k-winnable }\pi\in S_N} \theta^{\text{\#inversions in }\pi}.$$

\begin{remark}
When $k=0$, the strategy is to accept the first candidate. Thus, we win if and only if the value $N-1$ appears first. Since the value $N-1$ contributes $\theta^{N-2}$ to $W_1^*(N, k)$ and the remaining positions can be viewed as an arbitrary permutation in $S_{N-1}$, the probability of winning is 
$$\frac{W_1^*(N,0)}{(P_{N})!} = \frac{\theta^{N-2}(P_{N-1})!}{(P_{N})!} = \frac{(1-\theta)\cdot \theta^{N-2}}{1-\theta^N} \to 0 \text{ as }N \to \infty.$$ 
\end{remark}

\begin{theorem}\label{solve-2}
For $k \ge 1$,
$$W_1^*(N,k) = \theta^2 \cdot P_{N-2} \cdot W_1^*(N-1,k) + \sum\limits_{i=k+1}^{N-1} \theta^{N-i-1} \cdot T_1(i-1, k) \cdot B(i-1, N-i-1) \cdot (P_{N-i-1})!,$$ 
with initial condition $W_1^*(k+1, k) = 0$ and $W_1^*(k+2, k) = (P_{k})!.$
\end{theorem}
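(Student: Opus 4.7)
My plan is to decompose the set of $k$-winnable permutations $\pi \in S_N$ by conditioning on the value $v = \pi_N$ at the last position. Recall that a permutation is $k$-winnable iff the strategy picks $N-1$, which requires $N-1$ to be the first left-to-right maximum occurring at some position $> k$; since $N-1$ can only be a left-to-right maximum when $N$ lies strictly to its right, the case $v = N-1$ is impossible. So the sum splits into $v \in \{1, \dots, N-2\}$ and $v = N$, corresponding respectively to the two terms of the recurrence.

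For $v \in [1, N-2]$, the first $N-1$ positions contain $\{1, \dots, N\} \setminus \{v\}$, with both $N$ and $N-1$ present. Order-preserving relabeling to $\{1, \dots, N-1\}$ sends $N \mapsto N-1$ and $N-1 \mapsto N-2$, so the original permutation is $k$-winnable iff the relabeled one in $S_{N-1}$ is $k$-winnable. Relabeling preserves all inversions within positions $[1, N-1]$, and the last position contributes exactly $N - v$ further inversions (one for each of the values in $\{v+1, \dots, N\}$ lying at earlier positions). Summing $\theta^{N-v}$ over $v = 1, \dots, N-2$ gives $\theta^2 P_{N-2}$, and the within-block inversions sum to $W_1^*(N-1, k)$; this yields the first term.

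For $v = N$ I further condition on the position $i \in [k+1, N-1]$ of $N-1$. The value $N-1$ is automatically a left-to-right maximum (since $N$ is at position $N > i$), and it is the \emph{first} one at or after position $k+1$ iff $\max(\pi_1, \dots, \pi_{i-1})$ lies in $[1, k]$; passing to the order-preserving relabeling of the first $i-1$ positions as a permutation in $S_{i-1}$, this is exactly the non-$k$-pickable condition, contributing $T_1(i-1, k)$. The factor $\theta^{N-i-1}$ counts the inversions created by $N-1$ at position $i$ with the $N-i-1$ smaller values at positions $[i+1, N-1]$ (there is no inversion with $N$ at position $N$, and $N$ itself contributes none since it sits at the end). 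The factor $(P_{N-i-1})!$ accounts for the arbitrary arrangement inside positions $[i+1, N-1]$, while $B(i-1, N-i-1)$ sums $\theta^{\text{crossing inversions}}$ over all ordered $2$-partitions of $\{1, \dots, N-2\}$ into the value sets sitting at positions $[1, i-1]$ and $[i+1, N-1]$; no cross-inversion involves position $N$ since $N$ is the overall maximum. Multiplying the independent factors and summing over $i$ gives the second term.

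The main obstacle I anticipate is the inversion bookkeeping in the $v = N$ case: one must verify that the four factors $T_1(i-1, k)$, $B(i-1, N-i-1)$, $(P_{N-i-1})!$, and $\theta^{N-i-1}$ cleanly partition the total inversion count, with no double counting or omission. The key observations are (i) relabeling invariance of inversions within a block, (ii) that inversions between the two blocks depend only on the value-partition $(\Pi_1, \Pi_2)$ and match the defining sum of $B$, and (iii) that $N$ at the terminal position contributes nothing. Finally, I verify the initial conditions: $W_1^*(k+1, k) = 0$ because for $N = k+1$ the required pick $N-1 = k$ at position $k+1$ would force $k+1$ to lie among $\pi_1, \dots, \pi_k$, making $\pi_{k+1} = k$ not a left-to-right maximum; and $W_1^*(k+2, k) = (P_k)!$ because winnability forces $\pi_{k+1} = k+1$ and $\pi_{k+2} = k+2$ (since $\pi_{k+2} = k+1$ cannot be a left-to-right maximum), leaving the first $k$ positions as an arbitrary permutation of $\{1, \dots, k\}$ with no cross-inversions to the fixed last two entries.
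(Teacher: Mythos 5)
Your proof is correct and takes essentially the same approach as the paper: split on the value in the last position (ruling out $N-1$ there), relabel to reduce the $v\le N-2$ case to $W_1^*(N-1,k)$, and for $v=N$ condition on the position $i$ of $N-1$, with the four factors $T_1(i-1,k)$, $B(i-1,N-i-1)$, $(P_{N-i-1})!$, and $\theta^{N-i-1}$ accounting for the disjoint classes of inversions exactly as in the paper. Your verification of the initial conditions is, if anything, slightly more careful than the paper's.
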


\begin{proof}
If the value in the last position is $N-1$, then the permutation cannot be $k$-winnable as ($N$ appears before $N-1$ and the value $N-1$ is never going to be picked as a left-to-right maximum). Thus, we have to consider two cases for a $k$-winnable permutation $\pi \in S_N$.

\textbf{Case 1:} The last position in $\pi$ is one of the values $i = 1, 2, \ldots, N-2$. This contributes $N-i$ to the inversion count and we may view the remaining entries as some $k$-winnable $\tilde{\pi} \in S_{N-1}$. These contribute $W_1^*(N-1,k) \cdot (\theta^{N-1}+\theta^{N-2}+ \ldots + \theta^2) = \theta^2 \cdot P_{N-2} \cdot W_1^*(N-1,k)$ to $W_1^*(N,k)$.

\textbf{Case 2:} The entry in the last position of $\pi$ is $N$. The value $N-1$ must therefore be in positions $[k+1, N-1]$, say $i$. Then, the entries in positions $[1, i-1]$ form a non-$k$-pickable permutation and there is no restrictions on the values in positions $[i+1, N-1]$. Therefore, when $N-1$ is at position $i \in [k+1, N-1]$, $T_1(i-1,k)$ counts inversions in positions $[1, i-1]$, $B(i-1, N-i-1)$ counts the inversion in between, $(P_{N-i-1})!$ counts the inversions for positions $[i+1, N-1]$, and $\theta^{N-i-1}$ counts the inversions created by the value $N-1$ and values at positions in $[i+1, N-1]$.

When there are $k+1$ values, it is impossible to win using the strategy that rejects the first $k$ positions and accepts the next left-to-right maximum. When there are $k+2$ values, the only case when we can win by rejecting the first $k$ positions and accepting the next left-to-right maximum is when the value in the $(k+1)^{ \text{th}}$ position is $N-1 = k+1$ and the value in the  $k^{ \text{th}}$ position is $N = k+2$, while the remaining positions capture an arbitrary permutation in $S_k$.
\end{proof}

The above recurrence relation can be solved for and the closed form expression is presented in the result below.

\begin{theorem}\label{solvew1}
For $N \ge k+2$,
$$W_1^*(N, k) =  (P_{N-2})! \cdot  \theta^{N-k-2} \cdot (\frac{1-\theta^{N-k-1}}{1-\theta} + \frac{1-\theta^{N-k-2}}{1-\theta} \frac{1-\theta^k}{1-\theta^{k+1}} + \frac{1-\theta^{N-k-3}}{1-\theta} \frac{1-\theta^k}{1-\theta^{k+2}} + \ldots $$
$$+ \frac{1-\theta^2}{1-\theta} \frac{1-\theta^k}{1-\theta^{N-3}} + \frac{1-\theta}{1-\theta} \frac{1-\theta^k}{1-\theta^{N-2}}),$$
and $W_1^*(k+1, k) = 0$ and $W_1^*(k+2, k) = (P_{k})!$.
\end{theorem}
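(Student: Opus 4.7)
The plan is to prove the closed form by induction on $N \ge k+1$ using the recurrence from Theorem~\ref{solve-2}. The two initial conditions $W_1^*(k+1,k)=0$ and $W_1^*(k+2,k)=(P_k)!$ serve as base cases, and one immediately checks they match the claimed formula: at $N=k+1$ the first factor $(1-\theta^{N-k-1})/(1-\theta)=0$ and all subsequent terms are absent, and at $N=k+2$ only the last term $(1-\theta)/(1-\theta)\cdot(1-\theta^k)/(1-\theta^k)=1$ survives. For $N\ge k+3$ I would substitute the induction hypothesis for $W_1^*(N-1,k)$ into
$$W_1^*(N,k) = \theta^2 P_{N-2}\, W_1^*(N-1,k) + S, \quad S := \sum_{i=k+1}^{N-1} \theta^{N-i-1} T_1(i-1,k)\, B(i-1,N-i-1)\,(P_{N-i-1})!,$$
and verify that the result matches the claimed expression.

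The first concrete step is to compute $S$ in closed form. Using $T_1(i-1,k)=\theta^{i-k-1} P_k (P_{i-2})!$ (valid on the whole summation range, including the boundary $i=k+1$ since $T_1(k,k)=(P_k)! = P_k(P_{k-1})!$), substituting the closed form $B(i-1,N-i-1)=(P_{N-2})!/[(P_{i-1})!\,(P_{N-i-1})!]$, and canceling the $(P_{N-i-1})!$ and $(P_{i-2})!$ factors along with the $\theta$-powers collapses each summand to the uniform expression $\theta^{N-k-2} P_k (P_{N-2})!/P_{i-1}$. Consequently
$$S = \theta^{N-k-2} P_k (P_{N-2})!\sum_{j=k}^{N-2}\frac{1}{P_j}.$$

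Let $A_m$ denote the bracketed factor in the claim, so $A_m = P_{m-k-1} + P_k\sum_{\ell=k+1}^{m-2} P_{m-1-\ell}/P_\ell$. Dividing the target equality through by $(P_{N-2})!\,\theta^{N-k-2}$ reduces the induction step to the scalar identity
$$A_N = \theta A_{N-1} + P_k \sum_{j=k}^{N-2}\frac{1}{P_j}.$$
To establish this I would apply the elementary identity $\theta P_m = P_{m+1}-1$ termwise to $\theta A_{N-1}$, producing $P_{N-k-1}-1+P_k\sum_{\ell=k+1}^{N-3}(P_{N-1-\ell}-1)/P_\ell$. The $-P_k/P_\ell$ corrections then cancel exactly against the matching contributions of $S$ for $\ell\in[k+1,N-3]$, the extra $j=k$ term of $S$ contributes $P_k/P_k=1$ which absorbs the $-1$, and the extra $j=N-2$ term $P_k/P_{N-2}$ supplies the missing $\ell=N-2$ summand of $A_N$ (using $P_{N-1-(N-2)}=P_1=1$), yielding exactly $A_N$.

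The main obstacle is purely bookkeeping rather than conceptual: aligning the summation ranges of $\theta A_{N-1}$ (ending at $\ell=N-3$) with those of $S$ (ending at $j=N-2$) so that the telescoping is visible, and verifying that the formula $T_1(i-1,k)=\theta^{i-k-1}P_k(P_{i-2})!$ extends cleanly to the boundary index $i=k+1$. Because the recurrence couples $W_1^*(N,k)$ to both $W_1^*(N-1,k)$ and a nonempty sum that already involves two distinct regimes of $T_1$, both base cases $N=k+1$ and $N=k+2$ are genuinely required; the $N=k+2$ case cannot be recovered from the recurrence alone.
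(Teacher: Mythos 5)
Your proposal is correct and follows essentially the same route as the paper: both proofs hinge on collapsing the inhomogeneous sum to $\theta^{N-k-2}\,P_k\,(P_{N-2})!\sum_{j=k}^{N-2}1/P_j$ (your $S$ is the paper's $f(N)$, since $P_k/P_k=1$), after which the first-order recurrence is resolved. The only difference is direction of travel -- the paper unrolls the recurrence by a telescoping linear combination ``as in Theorem~\ref{formula}'' and leaves the simplification implicit, whereas you verify the closed form by induction via the scalar identity $A_N=\theta A_{N-1}+P_k\sum_{j=k}^{N-2}1/P_j$, which you check correctly using $\theta P_m=P_{m+1}-1$; this supplies exactly the detail the paper omits.
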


\begin{proof}
We first evaluate the sum in the recurrence relation of Theorem~\ref{solve-2}. We have
$$f(N):=\sum\limits_{i=k+1}^{N-1} \theta^{N-i-1} \cdot T_1(i-1, k) \cdot B(i-1, N-i-1) \cdot (P_{N-i-1})! = \sum\limits_{i=k+1}^{N-1} \theta^{N-i-1} \cdot T_1(i-1,k) \cdot \frac{(P_{N-2})!}{(P_{i-1})!} $$
$$= \theta^{N-k-2} \cdot (P_{k})! \cdot \frac{(P_{N-2})!}{(P_{k})!} + \theta^{N-k-3} \cdot \theta P_k \cdot (P_{k})! \cdot \frac{(P_{N-2})!}{(P_{k+1})!}+ \theta^{N-k-4} \cdot \theta^2\cdot P_k \cdot (P_{k+1})! \cdot \frac{(P_{N-2})!}{(P_{k+2})!}+ \ldots + $$$$\theta^{N-k-2}\cdot P_k \cdot (P_{N-3})! \cdot \frac{(P_{N-2})!}{(P_{N-2})!} = \theta^{N-k-2} \cdot (P_{N-2})! \cdot (1 + \frac{P_k}{P_{k+1}}+ \frac{P_k}{P_{k+2}}  + \ldots + \frac{P_k}{P_{N-2}}).$$

Similarly to what was done in the proof in Theorem~\ref{formula}, we can obtain the stated result after some simplification.
\end{proof}

The game winning probability of our strategy is then $W_1^*(N, k)$ plus the probability that no selection was made before the last position and $N-1$ appears at the last position, which is $$W_1(N,k) = W_1^*(N,k) + \theta \cdot T_1(N-1, k), \text{ where } k \le N-1.$$

\begin{figure}
\begin{center}
  \includegraphics[scale=0.68]{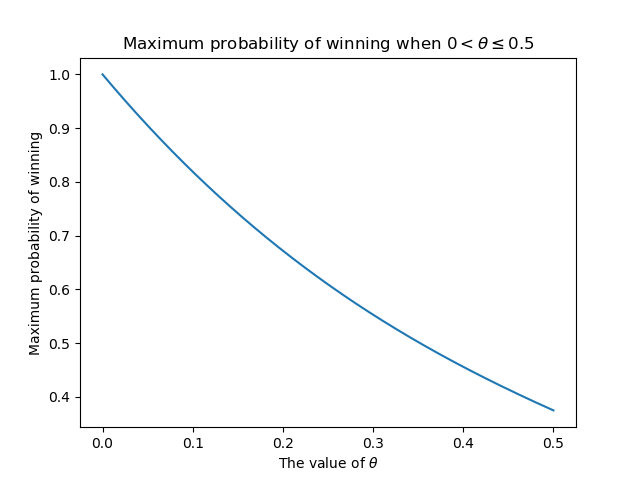}
  \caption{Probability of winning using the optimal strategy for a Mallows distribution with $0<\theta \le \frac{1}{2}$.}\label{best-3}
\end{center}
\end{figure}

\begin{theorem}\label{solution-2}
When $0 < \theta < \frac{1}{2}$, the optimal strategy as $N$ tends to infinity is to reject all but the last two candidates and then accept the next left-to-right maximum and if no selection is made before the last position then accept the last position. The maximum probability of winning is  $(1- \theta)(1-\theta+\theta^2)$ (See Figure~\ref{best-3}).
\end{theorem}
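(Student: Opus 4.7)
By Case 3.1 of Section~\ref{strategymallows}, for $0<\theta<1/2$ the optimal strategy has the single-threshold form ``reject the first $k$ candidates, accept the next left-to-right maximum thereafter, and if no selection has been made by position $N$ accept the last candidate.'' The associated winning probability is $W_1(N,k)/(P_N)!$ with $W_1(N,k)=W_1^*(N,k)+\theta\cdot T_1(N-1,k)$, so the plan is to evaluate $\lim_{N\to\infty}W_1(N,k)/(P_N)!$ and optimize over $k\in\{0,1,\ldots,N-1\}$.

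First I would rule out all scalings of $k$ with $N-k\to\infty$. Using the closed form of Theorem~\ref{solvew1}, the expression $T_1(N-1,k)=(\theta^{N-2}+\cdots+\theta^{N-1-k})(P_{N-2})!$, and the asymptotic $(P_{N-2})!/(P_N)!\to(1-\theta)^2$, both summands of $W_1(N,k)/(P_N)!$ carry a prefactor of the form $\theta^{N-k-O(1)}$, while the bracket appearing in $W_1^*$ is bounded by $(N-k-1)/(1-\theta)$. Consequently, whenever $N-k\to\infty$ the ratio tends to $0$, and only the scaling $k=N-c$ for a fixed $c\ge 1$ can yield a positive limit.

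For such $k$, substituting into the formulas and noting that each factor of the form $(1-\theta^{N-c+j})/(1-\theta^{N-c+j'})$ tends to $1$, I would establish the closed form
\[
V(c):=\lim_{N\to\infty}\frac{W_1(N,N-c)}{(P_N)!}=(1-\theta)\,\theta^{c-2}\!\left[\sum_{i=1}^{c-1}(1-\theta^i)+\theta^2\right].
\]
In particular $V(2)=(1-\theta)(1-\theta+\theta^2)$, which is the target value, while $V(1)=\theta(1-\theta)$ and $V(2)-V(1)=(1-\theta)^3>0$ dispose of the edge case $c=1$.

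The main obstacle is to show $V(2)\ge V(c)$ for every $c\ge 3$ when $\theta\le 1/2$. I would establish the stronger monotonicity $V(c)\le V(c-1)$ for all $c\ge 3$ by writing $S(c)=\sum_{i=1}^{c-1}(1-\theta^i)+\theta^2$, so that $V(c)/V(c-1)=\theta\,S(c)/S(c-1)$ and $S(c)-S(c-1)=1-\theta^{c-1}$; the required inequality is then $\theta(1-\theta^{c-1})\le(1-\theta)S(c-1)$. For $c=3$ this rearranges to $(1-\theta)(1-2\theta)\ge 0$, precisely the hypothesis $\theta\le 1/2$. For $c\ge 4$, the bound $S(c-1)\ge 2(1-\theta)+\theta^2$ (from $1-\theta^i\ge 1-\theta$) together with $\theta(1-\theta^{c-1})\le\theta$ reduces the claim to the elementary inequality $2(1-\theta)^2+\theta^2(1-\theta)\ge\theta$ on $(0,1/2]$, which is easily verified (the difference of the two sides equals $2$ at $\theta=0$, equals $1/8$ at $\theta=1/2$, and has negative derivative throughout this interval, so it remains strictly positive). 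Combining these steps yields $\sup_{k}\lim_N W_1(N,k)/(P_N)!=V(2)=(1-\theta)(1-\theta+\theta^2)$, as claimed.
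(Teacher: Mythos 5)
Your proposal is correct and follows essentially the same route as the paper's proof: rule out $N-k\to\infty$ via the exponentially small prefactor, compute the limit $V(c)=f(c,\theta)$ for fixed $c=N-k$ (your closed form $(1-\theta)\theta^{c-2}[\sum_{i=1}^{c-1}(1-\theta^i)+\theta^2]$ agrees exactly with the paper's $f(x,\theta)$), and then show the sequence decreases for $c\ge 3$ so that the maximum is at $c=2$. The only cosmetic difference is that you establish monotonicity through the ratio $V(c)/V(c-1)=\theta S(c)/S(c-1)$, which folds the paper's separate comparison of $f(2,\theta)$ and $f(3,\theta)$ into the same inequality, whereas the paper bounds the difference $f(x+1,\theta)-f(x,\theta)$ directly.
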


\begin{proof}
The proof is postponed to the Appendix (Section~\ref{appendix}).
\end{proof}

\subsection{Precise result for Case 3.3}\label{c3}

By Case 3.3 described in Section~\ref{strategymallows}, we know that the optimal strategy is a $(k_1, k_2)$-strategy or a $(k_2, k_1)$-strategy such that $0 \le k_1, k_2 \le N-2$. We show in this subsection that $N-k_1 \not \to \infty$ and $N-k_2 \not \to \infty$ for both strategies. By Theorem~\ref{thetamiddle2}, we know $k_1 \le k_2$ and thus only Case 3.3.2 in Section~\ref{strategymallows} can occur.
%\textbf{Case C.3.2:} the strategy with two thresholds $(k_2, k_1)$ such that $k_2 + 1 \le k_1 \le N-2$.
%\subsubsection{Case C.3.1}

We call a permutation $\pi \in S_N$ {\em $(k_1, k_2)$-winnable} if it results in a win using the $(k_1, k_2)$-strategy, i.e., by rejecting the first $k_1$ candidates then accept the next left-to-right maximum thereafter or rejecting the first $k_2$ candidates then accept the next left-to-right second-maximum thereafter, whichever appears first. Let $W_1(N, k_1, k_2)$ stand for 
$$\sum\limits_{(k_1,k_2)-\text{winnable permutations }\pi \in S_N}  \theta^{\text{\#inversions in }\pi}.$$

Throughout this section, we call a permutation $\pi \in S_N$ $(k_1, k_2)$-pickable if it results in one selection using the $(k_1, k_2)$-strategy. 

Let $T_1(N,k_1,k_2)$ stand for $$\sum\limits_{\text{non-}(k_1,k_2)-\text{pickable permutations }\pi \in S_N}  \theta^{\text{\#inversions in }\pi}.$$

Recall that we know $T_1(N, 0) = 0$ and for $k_1 \ge 1, N \ge k_1$, $$T_1(N, k_1) = (\theta^{N-1}+ \ldots + \theta^{N-k_1}) \cdot (P_{N-1})! = \theta^{N-k_1} \cdot P_{k_1} \cdot (P_{N-1})!.$$

\begin{lemma}\label{tk1k2}
For $N \ge k_2$,
$$T_1(N, k_1, k_2) = \theta^{2N-k_1-k_2} \cdot P_{k_1} \cdot P_{k_2-1} \cdot (P_{N-2})!.$$
\end{lemma}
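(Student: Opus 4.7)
My plan is to reduce the identity to a weighted enumeration of the allowable placements of the values $N$ and $N{-}1$, letting the remaining $N{-}2$ values be permuted freely.

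\emph{Step 1 (characterizing non-$(k_1,k_2)$-pickable permutations).} I will first show that $\pi\in S_N$ is non-$(k_1,k_2)$-pickable if and only if $\mathrm{pos}(N)\in[1,k_1]$ and $\mathrm{pos}(N-1)\in[1,k_2]$ (at distinct positions). Since $N$ is always a left-to-right maximum, placing $N$ beyond position $k_1$ immediately fires the first rule; conversely, if $N\in[1,k_1]$, no position $>k_1$ can be a left-to-right maximum. Assuming $N\in[1,k_1]\subseteq[1,k_2]$, the value $N-1$ placed in $(k_2,N]$ would appear after $N$ and hence fire the second rule; if instead $N-1\in[1,k_2]$, then after both $N$ and $N-1$ have appeared the running second-maximum is $N-1$, so no later position can host a left-to-right second maximum. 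A short case split on whether $N-1$ precedes or follows $N$ shows that any intermediate left-to-right second maximum (between the positions of $N$ and $N-1$, or before them) sits at position $\le k_2$, so the second rule also does not fire.

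\emph{Step 2 (inversion bookkeeping).} Fix $p_N,p_{N-1}$ as above and let the remaining $N-2$ values fill the other $N-2$ positions in any order. Since the Kendall statistic depends only on relative order, summing $\theta^{c(\sigma)}$ over all arrangements of these free values contributes a factor of $(P_{N-2})!$. A direct inversion count over the three classes $\{N\},\{N-1\},\{1,\dots,N-2\}$ gives
\[
c(\pi)-c_{\mathrm{int}}(\pi)=
\begin{cases}
2N-p_N-p_{N-1}-1,&\text{if }p_{N-1}<p_N,\\
2N-p_N-p_{N-1},&\text{if }p_N<p_{N-1},
\end{cases}
\]
where $c_{\mathrm{int}}(\pi)$ is the inversion count of the free block and the $-1$ reflects the missing inversion between $N$ and $N-1$ when $N-1$ comes first.

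\emph{Step 3 (summation).} After factoring out $(P_{N-2})!$, the remaining sum is
\[
S=\sum_{1\le p_{N-1}<p_N\le k_1}\theta^{2N-p_N-p_{N-1}-1}+\sum_{1\le p_N\le k_1,\,p_N<p_{N-1}\le k_2}\theta^{2N-p_N-p_{N-1}}.
\]
Applying the index shift $p_{N-1}\mapsto p_{N-1}+1$ to the first sum converts it into $\sum_{2\le p_{N-1}\le p_N\le k_1}\theta^{2N-p_N-p_{N-1}}$, which dovetails with the second sum (for each $p_N\in[2,k_1]$, the $p_{N-1}$-ranges $[2,p_N]$ and $[p_N+1,k_2]$ merge into $[2,k_2]$; for $p_N=1$ only the second sum contributes, with $p_{N-1}\in[2,k_2]$). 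The combined sum is the separable product
\[
S=\Bigl(\sum_{a=1}^{k_1}\theta^{N-a}\Bigr)\Bigl(\sum_{b=2}^{k_2}\theta^{N-b}\Bigr)=\theta^{N-k_1}P_{k_1}\cdot\theta^{N-k_2}P_{k_2-1},
\]
which, multiplied by $(P_{N-2})!$, yields the claim. The main obstacle is Step~1: one must verify carefully that with $N\in[1,k_1]$ and $N-1\in[1,k_2]$, no left-to-right second maximum can appear at any position $>k_2$, and this requires treating the segments before $\mathrm{pos}(N)$, between $\mathrm{pos}(N)$ and $\mathrm{pos}(N-1)$, and after $\max(\mathrm{pos}(N),\mathrm{pos}(N-1))$ separately. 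Steps~2 and~3 are then routine algebra.
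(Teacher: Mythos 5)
Your proof is correct and follows essentially the same route as the paper: characterize the non-$(k_1,k_2)$-pickable permutations as exactly those with $N$ at a position in $[1,k_1]$ and $N-1$ at a position in $[1,k_2]$, then factor the weighted sum into contributions from $N$, from $N-1$, and from the freely permuted remaining $N-2$ values. The paper reaches the product $(\theta^{N-1}+\cdots+\theta^{N-k_1})\cdot(\theta^{N-2}+\cdots+\theta^{N-k_2})\cdot (P_{N-2})!$ in one step by tracking the relative position of $N-1$ among the non-$N$ values; your absolute-position bookkeeping with the case split on the order of $N$ and $N-1$ and the index shift is an equivalent, more explicit version of the same computation.
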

\begin{proof}
Let $\pi \in S_N$ be non-$(k_1, k_2)$-pickable. Then the value $N$ must be positioned in $[1, k_1]$ and the value $N-1$ must be positioned in $[1, k_2]$. There are no restrictions on the other values. Thus, we have 

$$T_1(N, k_1, k_2) = (\theta^{N-1}+ \ldots + \theta^{N-k_1}) \cdot (\theta^{N-2}+\theta^{N-3} + \ldots + \theta^{N-k_2}) \cdot (P_{N-2})! = \theta^{2N-k_1-k_2} \cdot P_{k_1} \cdot P_{k_2-1} \cdot (P_{N-2})!.$$
\end{proof}

\begin{theorem}\label{wk1k2}
For $k_1 \le k_2 \le N-2$,
$$W_1(N, k_1, k_2) = \theta^2 \cdot P_{N-2} \cdot W_1(N-1, k_1, k_2) + \theta \cdot T_1(N-1, k_1, k_2) + $$ $$\sum\limits_{i = k_1+1}^{k_2+1} \theta^{N-i-1} \cdot T_1(i-1, k_1) \cdot B(i-1, N-i-1) \cdot (P_{N-i-1})!+ \sum\limits_{i = k_2+2}^{N-1} \theta^{N-i-1} \cdot T_1(i-1, k_1, k_2) \cdot B(i-1, N-i-1) \cdot (P_{N-i-1})!.$$
\end{theorem}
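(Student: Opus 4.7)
The plan is to derive the recurrence by conditioning on the value $v$ at position $N$ of a $(k_1,k_2)$-winnable permutation $\pi \in S_N$, distinguishing the three cases $v \in \{1,\ldots,N-2\}$, $v = N-1$, and $v = N$. In the first case, the $(k_1,k_2)$-strategy depends only on the relative order of the first $N-1$ candidates, so relabeling each value $w > v$ to $w-1$ yields a bijection onto $(k_1,k_2)$-winnable permutations in $S_{N-1}$; since relabeling preserves inversions and the value $v$ at position $N$ creates $N-v$ inversions with the larger values preceding it, summing $\theta^{N-v}$ over $v=1,\ldots,N-2$ gives the factor $\theta^2 P_{N-2}$ multiplying $W_1(N-1,k_1,k_2)$. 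In the second case, placing $N-1$ at the last position forces the strategy to make no earlier selection (otherwise $N-1$ is never picked), which is exactly the non-$(k_1,k_2)$-pickability of the first $N-1$ positions; the value $N$ sits at some position $p < N$ and yields the single inversion $(p,N)$ incident to position $N$, giving the term $\theta \cdot T_1(N-1,k_1,k_2)$.

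For the third case ($v = N$), the value $N-1$ sits at some position $i \in [1,N-1]$, and because $N$ is at position $N$, $N-1$ is automatically a left-to-right maximum at $i$. If $i \le k_1$, then $N-1$ is rejected during exploration and the contribution is zero. For $i \in [k_1+1, k_2+1]$, the requirement that no pick occur before $i$ reduces to the absence of a left-to-right maximum in positions $k_1+1, \ldots, i-1$, since $i-1 \le k_2$ keeps the second-maximum rule vacuous on the prefix; under relabeling of the first $i-1$ positions, this is exactly the non-$k_1$-pickable condition, contributing $T_1(i-1, k_1)$. For $i \in [k_2+2, N-1]$, both rules are active on the prefix and the analogous condition becomes non-$(k_1,k_2)$-pickability, contributing $T_1(i-1, k_1, k_2)$. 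In either subcase the values at positions $1,\ldots,i-1$ and $i+1,\ldots,N-1$ form an ordered $2$-partition of $\{1,\ldots,N-2\}$ into blocks of sizes $i-1$ and $N-i-1$: the crossing inversions contribute $B(i-1, N-i-1)$, the internal inversions of the right block contribute $(P_{N-i-1})!$, and $N-1$ at position $i$ creates $N-i-1$ inversions with the smaller values on its right, giving the factor $\theta^{N-i-1}$ (the pair $(N,N-1)$ is not an inversion since $N>N-1$).

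The principal obstacle is the boundary between the two sums: one must verify that $i = k_2+1$ belongs to the first sum (because the second-maximum rule is vacuous on the prefix of length $k_2$), while $i = k_2+2$ belongs to the second sum (because the prefix of length $k_2+1$ already includes position $k_2+1$, where the second-maximum rule is active). A secondary, error-prone task is the precise bookkeeping of inversions at position $N$: the pair $(N, N-1)$ is an inversion in the case $v = N-1$ but not in the case $v = N$, and this asymmetry is exactly what produces the lone $\theta$ multiplying $T_1(N-1, k_1, k_2)$ versus the absence of such a factor in the sums over $i$. Once these verifications are carried out, summing the four contributions yields the claimed recurrence.
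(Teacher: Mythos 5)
Your proposal is correct and follows essentially the same route as the paper's proof: conditioning on the value in the last position, splitting into the three cases $v\le N-2$, $v=N-1$, $v=N$, and in the last case further splitting on the position $i$ of $N-1$ according to whether the prefix constraint is non-$k_1$-pickability ($i\le k_2+1$) or non-$(k_1,k_2)$-pickability ($i\ge k_2+2$). Your treatment of the boundary between the two sums and of the inversion bookkeeping at position $N$ is in fact slightly more explicit than the paper's.
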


\begin{proof}
The proof is postponed to the Appendix (Section~\ref{appendix}).
\end{proof}

We can solve the recurrence relation in Theorem~\ref{wk1k2} as described in the result to follow.
\begin{theorem}\label{solvewk1k2}
For $k_1 \le k_2 \le N-3$, 
$$W_1(N, k_1, k_2) = \theta^{N-k_1-2} \cdot P_{k_1} \cdot (P_{N-2})! \cdot (\theta^{N-k_2+1} + \theta^{N-k_2+1} \cdot P_{k_2-1} \cdot \sum\limits_{i = k_2}^{N-2} \frac{1}{P_i}$$
$$ + \sum\limits_{i = k_1}^{k_2} \frac{P_{N-i-1}}{P_{i}} + \theta \cdot P_{k_2-1} \cdot \sum\limits_{i = k_2}^{N-3} \frac{\theta^{i-k_2} \cdot P_{N-i-2}}{P_i \cdot P_{i+1}}).$$ 

Moreover,  when $N = k_2+1$ we have
$$ W_1(k_2+1, k_1, k_2) = (P_{k_2-1})! \cdot P_{k_1} \cdot \theta^{k_2-k_1-1} \cdot (\theta^2 + \sum\limits_{k_1}^{k_2-1} \frac{P_{k_2-i}}{P_i}); \text{ and when $N = k_2 + 2$ }$$
$$W_1(k_2+2, k_1, k_2) = \theta^{N-k_1-2} \cdot P_{k_1} \cdot (P_{N-2})! \cdot \left(\theta^{N-k_2+1} + \theta^{N-k_2+1} \cdot \frac{P_{k_2-1}}{P_{k_2}}  + \sum\limits_{i = k_1}^{k_2} \frac{P_{N-i-1}}{P_i}\right).$$
\end{theorem}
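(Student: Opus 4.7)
The plan is to first simplify the forcing terms in the recurrence of Theorem~\ref{wk1k2} to a fully explicit (non-recursive) function of $N$, $k_1$, $k_2$, and then resolve the resulting first-order linear recurrence via the same telescoping multiplication trick used in the proofs of Theorem~\ref{formula} and Theorem~\ref{solvew1}. The first move is to substitute the closed forms $T_1(i-1, k_1) = \theta^{i-1-k_1} \cdot P_{k_1} \cdot (P_{i-2})!$ and $T_1(i-1, k_1, k_2) = \theta^{2(i-1)-k_1-k_2} \cdot P_{k_1} \cdot P_{k_2-1} \cdot (P_{i-3})!$ into the two sums, together with the key identity $B(i-1, N-i-1) \cdot (P_{N-i-1})! = (P_{N-2})! / (P_{i-1})!$. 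After this substitution the two inner sums collapse into rational combinations of the $P_j$'s, giving a recurrence of the form $W_1(N, k_1, k_2) = \theta^2 \cdot P_{N-2} \cdot W_1(N-1, k_1, k_2) + g(N)$, where $g(N)$ is an explicit, tractable sum of three pieces corresponding to the $\theta \cdot T_1(N-1, k_1, k_2)$ term, the $[k_1+1, k_2+1]$-window sum, and the $[k_2+2, N-1]$-tail sum.

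Next, following the template of Theorem~\ref{formula}, I would unroll this first-order recurrence by multiplying the equation for $W_1(N)$ by $1$, the equation for $W_1(N-1)$ by $\theta^2 P_{N-2}$, the equation for $W_1(N-2)$ by $\theta^4 P_{N-2} P_{N-3}$, and so on down to the equation anchoring at the base case, then summing and telescoping. This produces a weighted sum of $g(N-j)$'s plus an anchor term depending on $W_1(k_2+2, k_1, k_2)$ (or equivalently $W_1(k_2+1, k_1, k_2)$ with one extra step). The base cases themselves I would verify by direct combinatorial enumeration: for $N = k_2+1$ a winning permutation must either have $N-1$ at some position $i \in (k_1, k_2]$ with $N$ appearing earlier and the $k_1$-prefix containing no left-to-right maximum after position $k_1$, or have $N-1$ in the last position with $N$ located in $[1, k_1]$; summing the Kendall weights across these configurations yields the claimed $\theta^2 + \sum_{i=k_1}^{k_2-1} P_{k_2-i}/P_i$ form, and a one-step application of the recurrence then gives $W_1(k_2+2, k_1, k_2)$.

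The main obstacle is the final reorganization step: matching the telescoped expression to the specific four-term decomposition in the statement. The four displayed terms $\theta^{N-k_2+1}$, $\theta^{N-k_2+1} P_{k_2-1} \sum 1/P_i$, $\sum P_{N-i-1}/P_i$, and $\theta P_{k_2-1} \sum \theta^{i-k_2} P_{N-i-2}/(P_i P_{i+1})$ arise respectively from the anchor term, from iterating the $\theta \cdot T_1(N-1, k_1, k_2)$ contribution down the recurrence, from iterating the $S_1$-window contribution, and from iterating the $S_2$-tail contribution. Each iteration introduces a telescoping product $\prod_{\ell} P_{N-2-\ell}$ that must be cancelled against factors of $P_{i-1}$ and $P_{i-2}$ in the denominators, and the summation indices must be shifted so that every reference to $N$ is absorbed into the ratios $P_{N-i-1}/P_i$ and $P_{N-i-2}/(P_i P_{i+1})$ relative to $k_1$ and $k_2$. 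None of the individual algebraic manipulations is deep, but the two-threshold structure produces four distinct channels of contribution that must be carried through the bookkeeping simultaneously, which is what makes this the bulk of the work.
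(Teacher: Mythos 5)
Your overall route is the one the paper takes: substitute the closed forms of $T_1(\cdot,k_1)$ and $T_1(\cdot,k_1,k_2)$ together with $B(i-1,N-i-1)\cdot(P_{N-i-1})! = (P_{N-2})!/(P_{i-1})!$ into the recurrence of Theorem~\ref{wk1k2}, then unroll the resulting first-order recurrence with the telescoping multipliers $1,\ \theta^2 P_{N-2},\ \theta^4 P_{N-2}P_{N-3},\dots$ exactly as in Theorems~\ref{formula} and~\ref{solvew1}, anchoring at $N=k_2+1$. The paper's own proof is essentially this (it literally says ``similarly to the proof of Theorem~\ref{solvew1}''), so on the recursive part there is nothing to object to.

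There is, however, an error in your description of the base case. You characterize the first class of $(k_1,k_2)$-winnable permutations in $S_{k_2+1}$ as those with $N-1$ at a position $i\in(k_1,k_2]$ \emph{with $N$ appearing earlier}. That is backwards: a selection at a position $i\le k_2$ can only be triggered by the type~I (left-to-right maximum) rule, since the type~II rule is dormant until position $k_2+1$. For $N-1$ to be accepted at position $i\le k_2$ it must be a left-to-right \emph{maximum} there, which forces $N$ to appear \emph{after} position $i$; in addition the first $i-1$ entries must be non-$k_1$-pickable (their maximum sits in positions $[1,k_1]$). If $N$ appeared before position $i$, the value $N-1$ at position $i$ would only be a left-to-right second maximum, no selection would occur, and (since nothing after $i$ can be a left-to-right maximum and the last entry is then not a left-to-right second maximum) the permutation would not be winnable at all. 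So the class you describe consists of losing permutations, and the genuinely winning ones with $N$ after $i$ are omitted; carried out literally, your enumeration would not produce the $\sum_{i=k_1}^{k_2-1}P_{k_2-i}/P_i$ term. The fix is immediate, and indeed the paper sidesteps the enumeration entirely by observing that for $N=k_2+1$ the $(k_1,k_2)$-winnable set is the $k_1$-winnable set (whose weight $W_1^*(k_2+1,k_1)$ is already known from Theorem~\ref{solvew1}) augmented by the permutations with $N-1$ in the last position and $N$ in positions $[1,k_1]$, which contribute the extra $\theta^{k_2+1-k_1}P_{k_1}(P_{k_2-1})!$; your second class correctly captures exactly that extra piece.
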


\begin{proof}
The proof is postponed to the Appendix (Section~\ref{appendix}).
\end{proof}

We did not consider the case when $k_1 = 0$ since it means that we are using a strategy that accepts the first candidate. The probability of winning with this strategy equals 
$$\frac{\theta^{N-2} \cdot (P_{N-1})!}{(P_{N})!} = \frac{\theta^{N-2} \cdot (1 - \theta)}{1 - \theta^N} \to 0 \text{ as } N \to \infty,\, \frac{1}{2}<\theta<1.$$

\begin{theorem}\label{resultk1k2}
For $\frac{1}{2} < \theta < 1$, $1 \le k_1 \le k_2 \le N-2$, and $N \to \infty$, the optimal $(k_1, k_2)$-strategy is to have $k_1 = k_1(\theta)$ and $k_2 = k_2(\theta)$ for some functions $k_1(\theta)$ and $k_2(\theta)$ such that $N-k_1(\theta) \not \to \infty$ and $N-k_2(\theta) \not \to \infty$ (Numerical results are presented after the proof). 

Define $x := N-k_1 \not \to \infty$ and $y := N-k_2 \not \to \infty$. The probability of winning equals 
$$f(x, y):=\theta^{x-2} \cdot (1 - \theta) \cdot (\theta^{y+1} + \theta^{y+1} \cdot (y-1) + (x - y + 1 - \theta^{y-1} \cdot \frac{1 - \theta^{x-y+1}}{1 - \theta}) + \theta \cdot (\frac{1- \theta^{y-2}}{1-\theta} - \theta^{y-2} \cdot (y - 2))).$$ 
\end{theorem}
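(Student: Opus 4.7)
The proof proceeds by dividing the exact expression for $W_1(N,k_1,k_2)$ from Theorem~\ref{solvewk1k2} by the normalizer $(P_N)! = \sum_\pi \theta^{c(\pi)}$, then analyzing the result as $N\to\infty$ in terms of the shifted parameters $x := N-k_1$ and $y := N-k_2$. Using $(P_{N-2})!/(P_N)! = 1/(P_{N-1}P_N)$, the success probability becomes
$$\frac{W_1(N,k_1,k_2)}{(P_N)!} = \theta^{x-2}\cdot \frac{P_{k_1}}{P_{N-1}P_N}\cdot \Bigl[A_1 + A_2 + A_3\Bigr],$$
where $A_1 = \theta^{y+1}\bigl(1 + P_{k_2-1}\sum_{i=k_2}^{N-2}1/P_i\bigr)$, $A_2 = \sum_{i=k_1}^{k_2}P_{N-i-1}/P_i$, and $A_3 = \theta\,P_{k_2-1}\sum_{i=k_2}^{N-3}\theta^{i-k_2}P_{N-i-2}/(P_i P_{i+1})$. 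The constraint $k_1\le k_2\le N-2$ gives $x\ge y\ge 2$, so only two asymptotic regimes need be considered: $x\to\infty$ and $x$ bounded.

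The unbounded regime is ruled out by the uniform bound $1\le P_n\le 1/(1-\theta)$ (valid for $1/2<\theta<1$), which implies $|A_j| = O(x)$ for each $j$, so the probability is $O(x\theta^{x-2})\to 0$ as $x\to\infty$. Hence the optimal $(k_1,k_2)$ must satisfy $N-k_1\not\to\infty$, and since $y\le x$ also $N-k_2\not\to\infty$, as claimed.

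In the bounded regime, I would apply $P_n\to 1/(1-\theta)$ termwise. The prefactor $P_{k_1}/(P_{N-1}P_N)$ limits to $1-\theta$. The inner sum of $A_1$ has $y-1$ terms each tending to $1-\theta$, giving $A_1 \to \theta^{y+1} + (y-1)\theta^{y+1}$. Substituting $j = N-i$ in $A_2$ (so $j\in[y,x]$) and using $P_{j-1}=(1-\theta^{j-1})/(1-\theta)$ yields
$$A_2 \to \sum_{j=y}^{x}(1-\theta^{j-1}) = (x-y+1) - \theta^{y-1}\frac{1-\theta^{x-y+1}}{1-\theta}.$$
Substituting $j=i-k_2$ in $A_3$ (so $j\in[0,y-3]$) and using $P_{y-j-2}=(1-\theta^{y-j-2})/(1-\theta)$ together with $P_iP_{i+1}\to 1/(1-\theta)^2$ yields
$$A_3 \to \theta\sum_{j=0}^{y-3}\bigl(\theta^j - \theta^{y-2}\bigr) = \theta\Bigl[\frac{1-\theta^{y-2}}{1-\theta} - (y-2)\theta^{y-2}\Bigr].$$
Multiplying the sum $A_1+A_2+A_3$ by the prefactor $\theta^{x-2}(1-\theta)$ yields $f(x,y)$. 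Since $f(x,y)=O(x\theta^x)$ for large $x$, the maximum over positive integers $x\ge y\ge 2$ is attained at finite $x^\ast(\theta), y^\ast(\theta)$, giving $k_1(\theta) = N - x^\ast(\theta)$ and $k_2(\theta) = N - y^\ast(\theta)$.

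The main obstacle I anticipate is the careful treatment of the boundary case $y=2$, where the $A_3$ sum is empty and must be interpreted consistently with the value $\theta[(1-\theta^0)/(1-\theta) - 0\cdot\theta^0]=0$ of the limiting formula; a similar bookkeeping subtlety arises in verifying that the $O(x)$ bound on the bracket in the $x\to\infty$ regime is tight enough — the polynomial factor in $x$ from $A_2$ cannot compete with the $\theta^{x-2}$ decay, but writing out the estimate explicitly requires bounding the geometric-type sums in $A_2$ and $A_3$ uniformly in $k_2$ rather than merely pointwise.
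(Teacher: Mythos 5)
Your proposal is correct and follows essentially the same route as the paper's proof: divide the closed form from Theorem~\ref{solvewk1k2} by $(P_N)!$, kill the regime $N-k_1\to\infty$ via the exponential decay of $\theta^{x-2}$ against a bracket that is at most linear in $x$, and then take termwise limits $P_n \to 1/(1-\theta)$ in the bounded regime to obtain $f(x,y)$. Your uniform bounds on the three sums and the remark about the empty sum at $y=2$ are somewhat more explicit than the paper's treatment, but the decomposition and the case analysis are the same.
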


\begin{proof}
The proof is postponed to the Appendix (Section~\ref{appendix}).
\end{proof}
We define the {\em $(k_2, k_1)$-strategy} with $k_2 \le k_1 \le N-2$ to be the strategy that rejects the first $k_2$ candidates then accepts the next left-to-right second-maximum thereafter or rejects the first $k_1$ candidates and then accepts the next left-to-right maximum thereafter, whichever appears first. 

We can also similarly define a $(k_2, k_1)$-pickable permutation $\pi \in S_N$, $T_2(N, k_2, k_1)$, and $W_2(N, k_2, k_1)$. By arguments similar to those used in Lemma~\ref{tk1k2}, Theorem~\ref{wk1k2},~\ref{solvewk1k2}, and~\ref{resultk1k2}, we can prove Theorem~\ref{resultk2k1}. The proof is postponed to the Appendix (Section~\ref{appendix}).

\begin{theorem}\label{resultk2k1}
For $\frac{1}{2} < \theta < 1$, $N \to \infty$, the optimal $(k_2, k_1)$-strategy is to have $k_1 = k_1(\theta)$ and $k_2 = k_2(\theta)$ for some functions $k_1(\theta)$ and $k_2(\theta)$ such that $N-k_1(\theta) \not \to \infty$ and $N-k_2(\theta) \not \to \infty$.
\end{theorem}

By Theorem~\ref{thetamiddle2}, every Type I prefix of length longer than $k_2(\theta)$ is positive. Therefore, we have $k_1(\theta) \le k_2(\theta)$ and conclude that the optimal strategy is the $(k_1(\theta), k_2(\theta))$-strategy, i.e., we reject the first $k_1(\theta)$ candidates and then accept the next left-to-right maximum thereafter or reject the first $k_2(\theta) \ge k_1(\theta)$ candidates and then accept the next left-to-right second-maximum thereafter, whichever appears first.

%\subsubsection{Comparing two strategies and the final result}\label{compare}
Since for $f(x, y)$ as defined in Theorem~\ref{resultk1k2} we have that $x$ and $y$ must both be integers, and since $f(x, y) \to 0$ as $x \to \infty$ and $y \to \infty$, we can pick a large number (say, $100$) as an upper bound for $x$ and $y$; and, for each $\theta \in \{0.51, 0.52, \ldots, 0.99\}$ use brute force search to find the maximum of $f(x,y)$ subject to the constraint $1 \le y \le x \le 100$. (The number $100$ is large enough as we also ran computer simulations to find the maximum of $f(x,y)$ subject to $1 \le y \le x$ without restricting ourselves to integer values of 
$x$ and $y$; it turns out that the $x,y$ which realize the maximum of $f(x,y)$ obtained with integer constraints are floors or ceilings of the $x,y$ that maximize $f(x,y)$ without the integer constraints). 

The optimal strategy is a $(k_1, k_2)$-strategy for some $k_1 \le k_2$ such that both $N-k_1$ and $N-k_2$ $\not\to \infty$ (See Figure~\ref{best-4} and Table~\ref{table-1}). Note that as $x \to \infty$, $y \to \infty$, we have that the probability of winning $\to 0.25$ as $\theta \to 1$, which matches the well-known result for $\theta = 1$ and also the (same and more detailed) result by our approach presented in Remark~\ref{theta=1solve}.

\begin{figure}
\begin{center}
  \includegraphics[scale=0.7]{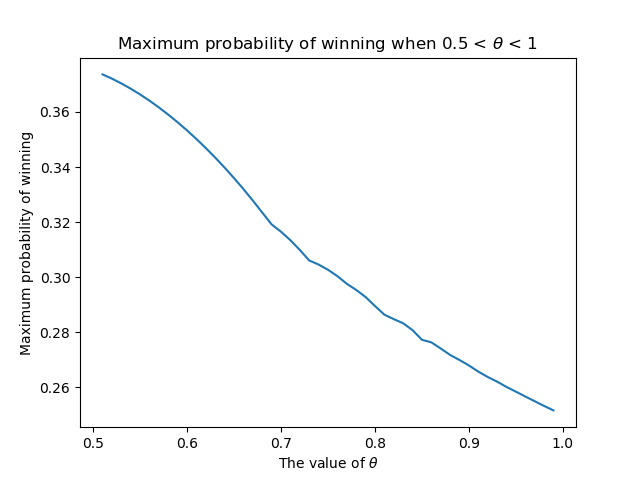}
  \caption{Probability of winning using the optimal strategy for a Mallows distribution with $\frac{1}{2}<\theta<1$.}\label{best-4}
\end{center}
\end{figure}

\begin{table}
\begin{center}
\begin{tabular}{|c|c|c|c|c|c|c|c|}
\hline
$\theta$ &  $x = N-k_1$ & $y = N-k_2$ & $f(x,y)$ & $\theta$ &  $x = N-k_1$ &  $y = N-k_2$ & $f(x,y)$    \\ \hline
0.51 & 3 & 2 & 0.37365098 & 0.76 & 4 & 3 & 0.30035513 \\ \hline
0.52 & 3 & 2 & 0.37210767 & 0.77 & 4 & 3 & 0.29758801 \\ \hline
0.53 & 3 & 2 &  0.37037533& 0.78 & 5 & 3 & 0.29534636 \\ \hline
0.54 & 3 & 2 & 0.36845868 & 0.79 & 5 & 3 & 0.29278142 \\ \hline
0.55 & 3 & 2 & 0.36636187 & 0.80 & 5 & 3 & 0.28950528 \\ \hline
0.56 & 3 & 2 & 0.36408852 & 0.81 & 5 & 4 & 0.28636405 \\ \hline
0.57 & 3 & 2 & 0.36164162 & 0.82 & 5 & 4 & 0.28475072 \\ \hline
0.58 & 3 & 2 & 0.35902353 & 0.83 & 6 & 4 & 0.28323769 \\ \hline
0.59 & 3 & 2 & 0.35623597 & 0.84 & 6 & 4 &  0.2807399 \\ \hline
0.60 & 3 & 2 & 0.35328 & 0.85 & 6 & 5 & 0.27723561 \\ \hline
0.61 & 3 & 2 & 0.35015597 & 0.86 & 7 & 5 & 0.27631243 \\ \hline
0.62 & 3 & 2 & 0.34686351 & 0.87 & 7 & 5 & 0.27407495 \\ \hline
0.63 & 3 & 2 & 0.34340152 & 0.88 & 8 & 6 & 0.27172552 \\ \hline
0.64 & 3 & 2 & 0.33976812 & 0.89 & 8 & 6 &  0.26989821 \\ \hline
0.65 & 3 & 2 &  0.33596062 & 0.90 & 9 & 7 & 0.26791563 \\ \hline
0.66 & 3 & 2 & 0.33197556  & 0.91 & 10 & 8 & 0.26567038 \\ \hline
0.67 & 3 & 2 & 0.32780861 & 0.92 & 11 & 9 & 0.26372892 \\ \hline
0.68 & 3 & 2 & 0.32345457 & 0.93 & 12 & 10 & 0.26203596 \\ \hline
0.69 & 4 & 2 & 0.31915211 & 0.94 & 14 & 11 & 0.2601134 \\ \hline
0.70 & 4 & 2 & 0.316491 & 0.95 & 17 & 14 & 0.25839363 \\ \hline
0.71 & 4 & 2 & 0.31340159 & 0.96 & 21 & 17 & 0.25663997 \\ \hline
0.72 & 4 & 2 & 0.30987746 & 0.97 & 27 & 23 & 0.25492095 \\ \hline
0.73 & 4 & 3 & 0.30605788 & 0.98 & 39 & 35 & 0.25320664 \\ \hline
0.74 & 4 & 3 & 0.30456693 & 0.99 & 76 & 69 & 0.25158519 \\ \hline
0.75 & 4 & 3 & 0.30267334 &         &      &      &    \\ \hline
\end{tabular}
\end{center}
\caption{Maximum winning probabilities and optimal strategies for $\frac{1}{2}< \theta < 1$.}\label{table-1}
\end{table}

%\section{Global picture}\label{sec:simulations}
The results of our analysis are summarized in~Figure~\ref{best-5}, depicting the maximum probability of winning versus the value of $\theta>0$.
\begin{figure}
\begin{center}
  \includegraphics[scale=0.5]{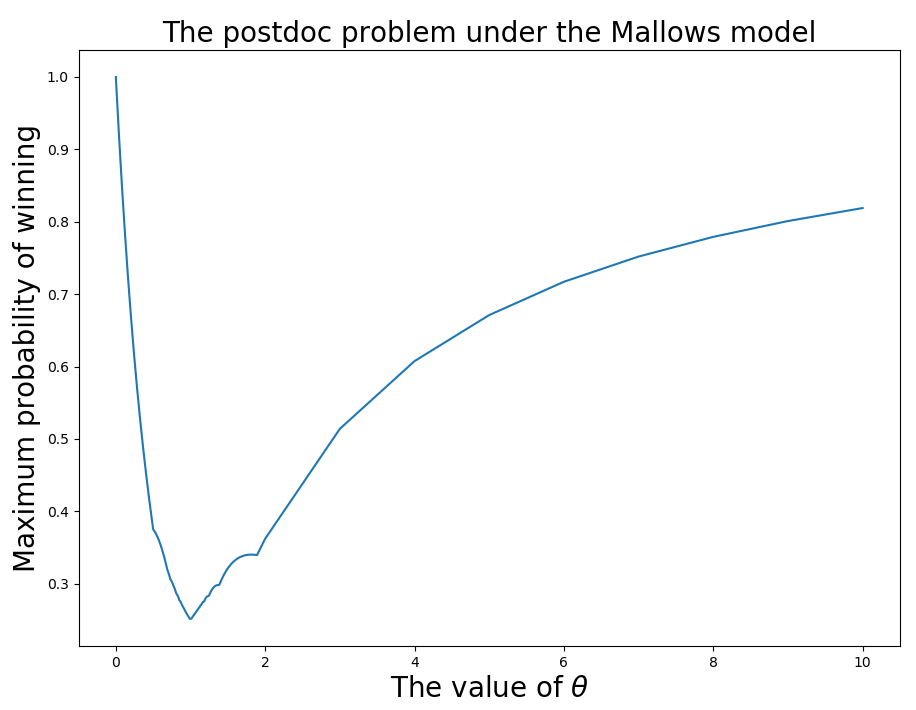}
  \caption{Probability of winning using the optimal strategy for a Mallows distribution with $\theta>0$.}\label{best-5}
\end{center}
\end{figure}
%\textcolor{red}{can we change theta to $\theta$ in the figure?}
%\section{Future directions}
%\textcolor{blue}{We conclude the paper by listing a few interesting extensions of the work introduced in the paper.}
%$\bullet$ \textcolor{blue}{It would be interesting to extend our work to the problem of finding the $a$th-best candidate, where $a \ge 3$.}
%$\bullet$ \textcolor{blue}{The problem of finding the top $a$ candidates may also be solved by extending our work.}
%\textcolor{green}{Shall we mention a few open problems?}

\begin{remark}
An interesting open question is to find the optimal strategy for identifying the $k^{th}$-best candidate when candidates are presented according to the Mallows distribution. We believe that the problem may be addressed using similar proof techniques.
\end{remark}

\textbf{Acknowledgment.} The work was supported in part by the NSF grants NSF CCF 15-26875 and The Center for Science of Information at Purdue University, under contract number 239 SBC PURDUE 4101-38050. Early parts of the work were also  supported by the DARPA Molecular Informatics Program. %\textcolor{blue}{Parts of the results were submitted to the IEEE International Symposium on Information Theory (ISIT) 2021, Melbourne, Victoria, Australia.} 

%\bibliographystyle{IEEEtran}
%\bibliography{references}

\section{Appendix}\label{appendix}

\textbf{Proof of Theorem~\ref{thetamiddle2}:} Since $N-k \not \to \infty$, we know that $k \to \infty$ and ~\eqref{eq1},~\eqref{eq2},~\eqref{eq3},~\eqref{eq4} become

\begin{equation}\label{eq1'}
Q^o_1(k-1) = \bar{Q}_1(k) + \bar{Q}_2(k) + Q_1^o(k) \cdot \frac{\theta^2}{1-\theta},
\end{equation}
\begin{equation}\label{eq2'}
Q_1(k-1) = Q_1(k) \cdot \frac{\theta}{1-\theta} + \frac{1}{\theta} \cdot Q_2(k),
\end{equation}
\begin{equation}\label{eq3'}
Q_2^o(k-1) = \bar{Q}_1(k) \cdot \theta + \bar{Q}_2(k) \cdot \theta + Q_2^o(k) \cdot \frac{\theta^2}{1-\theta},
\end{equation}
\begin{equation}\label{eq4'}
Q_2(k-1) = Q_2(k) \cdot \frac{\theta^2}{1-\theta},
\end{equation}
respectively. When $N-k = 0$, $Q_2(N) = \theta > 0 = Q_2^o(N)$ and $Q_1(N) = 0 \ge 0 = Q_1^o(N)$.

We prove the Theorem by induction. We first examine the base case $N-k = 1$, for which
$$Q_2(N-1) = \theta^N + \ldots + \theta^3 > \theta^2 = Q_2^o(N-1)$$ 
since $\frac{1}{2} < \theta < 1$ and $Q_1(N-1) = \theta^0 = 1 > \theta = Q_1^o(N-1)$.

Now we assume the argument applies for $\hat{k} \ge N-(j-1)$ and prove it for $k' = N-j$. By the induction hypothesis, we have $Q_2(k') = Q_2(N-j) > (\ge) \text{ } Q_2^o(N-j) = Q_2^o(k')$.
\begin{claim}
Under the assumptions 1) $j = N-k' \not \to \infty$, 2) for all $N-1 \ge \hat{k} \ge N-(j-1)$, $Q_2(\hat{k}) > (\ge) \text{ } Q_2^o(\hat{k})$ and $Q_1(\hat{k}) > (\ge) \text{ } Q_1^o(\hat{k})$, and 3) $Q_2(N-j) > (\ge) \text{ } Q_2^o(N-j)$, we have 
$$Q_1^o(N-j) = \frac{j \cdot \theta^{2j-1}}{(1-\theta)^{j-1}} + \frac{\theta^{2j-4} \cdot P_{1} + \ldots + \theta^{j-2} \cdot P_{j-1}}{(1-\theta)^{j-2}} \text{ and }  Q_2^o(N-j) = \theta \cdot Q_1^o(N-j) .$$
\end{claim}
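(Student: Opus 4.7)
The strategy is to unroll the recurrence~\eqref{eq1'} for $Q_1^o$, using assumption~2) to replace each $\bar{Q}$-term by the corresponding $Q$-term, and then substitute closed-form expressions for $Q_1$ and $Q_2$ derived from~\eqref{eq2'} and~\eqref{eq4'}. The second identity $Q_2^o(N-j)=\theta\cdot Q_1^o(N-j)$ will then follow from a general relation already recorded in the paper, so the entire claim reduces to one careful algebraic computation.

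First, I would note that assumption~2) yields $\bar{Q}_i(\hat{k})=Q_i(\hat{k})$ for $i\in\{1,2\}$ on the whole range $N-(j-1)\le \hat{k}\le N-1$. Iterating~\eqref{eq1'} downward from $k-1=N-j$ until we hit the boundary $Q_1^o(N)=0$ gives
$$Q_1^o(N-j) \;=\; \sum_{\ell=0}^{j-1}\bigl(Q_1(N-\ell)+Q_2(N-\ell)\bigr)\left(\frac{\theta^2}{1-\theta}\right)^{j-1-\ell},$$
where the $\ell=0$ term is evaluated using the boundary values $Q_1(N)=0$ and $Q_2(N)=\theta$.

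Second, I would solve the two scalar recurrences in closed form. From~\eqref{eq4'} with $Q_2(N)=\theta$, a one-line induction gives $Q_2(N-\ell)=\theta^{2\ell+1}/(1-\theta)^{\ell}$ for every $\ell\ge 0$. Feeding this into~\eqref{eq2'} and inducting again on $\ell$, I would verify that $Q_1(N-\ell)=\theta^{\ell-1}P_\ell/(1-\theta)^{\ell-1}$ for every $\ell\ge 1$; the base cases $Q_1(N-1)=1$ and $Q_1(N-2)=\theta P_2/(1-\theta)$ are both straightforward to check.

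Third, I would substitute these closed forms into the telescoped sum. The contribution of each $Q_2(N-\ell)$ term collapses to the constant (in $\ell$) value $\theta^{2j-1}/(1-\theta)^{j-1}$; summed over $\ell=0,\ldots,j-1$ this gives $j\cdot\theta^{2j-1}/(1-\theta)^{j-1}$, which is the first summand of the claim. The contributions of the $Q_1(N-\ell)$ terms, for $\ell=1,\ldots,j-1$, simplify after cancellation of powers of $1-\theta$ to $\theta^{2j-\ell-3}P_\ell/(1-\theta)^{j-2}$, whose sum is exactly the second summand $(\theta^{2j-4}P_1+\cdots+\theta^{j-2}P_{j-1})/(1-\theta)^{j-2}$.

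Finally, the relation $Q_2^o(N-j)=\theta\cdot Q_1^o(N-j)$ follows at once from the identity $Q_2^o(k)=\theta\cdot Q_1^o(k)$ recorded just below~\eqref{eq4}; alternatively, comparing~\eqref{eq1'} with~\eqref{eq3'} term by term under the same simplifications $\bar{Q}_i=Q_i$ shows that the unrolled expansion of $Q_2^o$ equals $\theta$ times that of $Q_1^o$. The main technical obstacle I anticipate is the arithmetic bookkeeping required to verify that the $Q_2$-contributions really collapse to a quantity independent of $\ell$ and that the $Q_1$-contributions produce precisely the stated $P_\ell$-weighted sum, with no stray powers of $\theta$ or $(1-\theta)$ left over.
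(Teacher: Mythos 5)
Your proposal is correct and follows essentially the same route as the paper: the paper's own proof simply notes that under the hypotheses \eqref{eq1'} and \eqref{eq3'} reduce to the forms with $\bar{Q}_i=Q_i$, and that the resulting recurrence is solved using the closed forms of $Q_1$ and $Q_2$ from Claims~\ref{q1} and~\ref{q2} in the limit $N\to\infty$, $N-k\not\to\infty$ --- exactly the unrolling and substitution you carry out (and your limiting expressions $Q_2(N-\ell)=\theta^{2\ell+1}/(1-\theta)^{\ell}$ and $Q_1(N-\ell)=\theta^{\ell-1}P_\ell/(1-\theta)^{\ell-1}$, as well as the final bookkeeping, all check out). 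The only detail worth making explicit is that the $\hat k=N$ boundary is not covered by assumption~2) but follows from $Q_1(N)=Q_1^o(N)=0$ and $Q_2(N)=\theta>0=Q_2^o(N)$, which you handle correctly.
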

\begin{proof}
By the assumptions of the claim,~\eqref{eq1'} and~\eqref{eq3'} reduce to
\begin{equation}\label{eq1''}
Q_1^o(k-1) = Q_1(k) + Q_2(k) + Q_1^o(k) \cdot \frac{\theta^2}{1-\theta},
\end{equation}
\begin{equation}\label{eq3''}
Q_2^o(k-1) = Q_1(k) \cdot \theta + Q_2(k) \cdot \theta + Q_2^o(k) \cdot \frac{\theta^2}{1-\theta},
\end{equation}
and we can solve the recurrence using the formulas for $Q_1$ and $Q_2$ provided in Claim~\ref{q2} and Claim~\ref{q1} with the additional conditions that $N \to \infty$ and $N-k \not \to \infty$, and some simple algebra (which we omitted). 

\end{proof}

Under the assumption $Q_2(N-j) > (\ge) \text{ } Q_2^o(N-j)$, we define $h_1(j)$ as
\begin{equation}\label{root2}
h_1(j) := (j+1) \cdot \theta^{j+2} - j \cdot \theta^{j+1} - (j-1) \cdot \theta^j + j \cdot \theta^{j-1} - 1
\end{equation}
so that $h_1(j)  > (\ge) \text{ } 0$. We want to show that $Q_1(N-j) > (\ge) \text{ } Q_1^o(N-j)$, which is equivalent to $h_2(j) > (\ge) \text{ } 0$ with
\begin{equation}\label{root1}
h_2(j) := j \cdot \theta^{j+2} - (j+1) \cdot \theta^{j+1} - (j-1) \cdot \theta^j + j \cdot \theta^{j-1} + \theta - 1.
\end{equation}

To complete the proof, we need to establish Claim~\ref{calc}.

\begin{claim}\label{calc}
For any given $\frac{1}{2} < \theta < 1$, both equalities $h_1(j) = 0$ and $h_2(j) = 0$ (with $h_1(j), h_2(j)$ defined in~\eqref{root2} and~\eqref{root1}) have exactly one real positive root, which we write as $j_1(\theta)$ and $j_2(\theta)$; the inequalities $h_1(j)  > (\ge) \text{ } 0$ and $h_2(j)  > (\ge) \text{ } 0$ hold for $0<j< (\le) \text{ }j_1(\theta)$ and $0<j < (\le) \text{ } j_2(\theta)$, respectively. Moreover, we always have $j_2(\theta)>j_1(\theta)$.
\end{claim}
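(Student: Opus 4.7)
\medskip

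\noindent\textbf{Proof proposal for Claim~\ref{calc}.} The plan is to rewrite both polynomials in the common shape
$$h_i(j) = \theta^{j-1}(A j + B_i) + C_i,$$
using the factorisation $\theta^3-\theta^2-\theta+1 = (1-\theta)^2(1+\theta)$; here $A = (1-\theta)^2(1+\theta)>0$ is common to both, while $(B_1,C_1) = (\theta(\theta^2+1),-1)$ and $(B_2,C_2) = (\theta(1-\theta),\theta-1)$. For any $A,B>0$, the auxiliary function $g(j):=\theta^{j-1}(Aj+B)$ has derivative $\theta^{j-1}[A + (Aj+B)\ln\theta]$ whose bracket is linear in $j$ with negative slope (since $\ln\theta<0$), so $g$ is either strictly decreasing on $(0,\infty)$ or strictly unimodal (increasing then decreasing), and in both cases $g(j)\to 0$ as $j\to\infty$. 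This shape is the workhorse behind the uniqueness of the roots.

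For $h_1$: one checks $g_1(0) = \theta^2+1 > 1$ while $g_1\to 0$, so by the shape above $g_1 - 1$ is strictly positive at $0$, possibly rises further, and then descends strictly monotonically past $0$. Hence $h_1$ vanishes exactly once on $(0,\infty)$ at some $j_1(\theta)$, is positive on $(0,j_1(\theta))$ and negative on $(j_1(\theta),\infty)$, yielding both assertions for $h_1$. For $h_2$: note $h_2(0)=0$ and $h_2(\infty)=\theta-1<0$; I would then show $g_2'(0)>0$, which after clearing positive factors reduces to $\phi(\theta):=1-\theta^2+\theta\ln\theta>0$ on $(1/2,1)$. This follows from $\phi(1)=0$, $\phi''(\theta)=-2+1/\theta<0$ on $(1/2,1)$ (so $\phi'$ is strictly decreasing), and $\phi'(1/2)=\ln(1/2)<0$, which together force $\phi'<0$ and hence $\phi>\phi(1)=0$ on $(1/2,1)$. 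Given $g_2'(0)>0$, the function $g_2$ strictly rises above $g_2(0)=1-\theta$ and then decreases to $0$, so $g_2-(1-\theta)$ vanishes exactly once on $(0,\infty)$ at some $j_2(\theta)$, with the sign behaviour required for $h_2$.

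For the comparison $j_2(\theta)>j_1(\theta)$, a direct subtraction gives the clean identity
$$h_2(j) - h_1(j) = \theta\bigl(1 - (1+\theta)\theta^j\bigr),$$
which is negative for $j<j^\#$ and positive for $j>j^\#$, where $j^\# := \ln(1+\theta)/|\ln\theta|$. Because $h_1>0$ on $(0,j_1(\theta))$, it suffices to show $h_1(j^\#)>0$: that would force $j^\#<j_1(\theta)$, hence $h_2(j_1(\theta))>h_1(j_1(\theta))=0$, and then by the sign structure of $h_2$ we conclude $j_2(\theta)>j_1(\theta)$. Substituting $\theta^{j^\#}=1/(1+\theta)$ into the factored form of $h_1$ collapses it to
$$h_1(j^\#) = \frac{1-\theta}{\theta(1+\theta)\,|\ln\theta|}\,\bigl[(1-\theta^2)\ln(1+\theta) + \theta^2\ln\theta\bigr],$$
so everything reduces to proving $F(\theta):=(1-\theta^2)\ln(1+\theta)+\theta^2\ln\theta>0$ on $(1/2,1)$.

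This last inequality is the main analytic obstacle. My plan is to use $F(1)=0$ together with $F'(\theta)=2\theta\ln\!\bigl(\theta/(1+\theta)\bigr)+1$ and, setting $u=\theta/(1+\theta)\in(0,1/2]$, the identity $F''(\theta)=2[\ln u+(1-u)]$. The standard inequality $\ln u<u-1$ for $u\in(0,1)$ then gives $F''<0$ on $(0,1)$, so $F'$ is strictly decreasing; combined with $F'(1/2)=\ln(1/3)+1<0$ this yields $F'<0$ on $[1/2,1]$, and with $F(1)=0$ it gives $F>0$ on $(1/2,1)$, as desired. The strict-versus-non-strict parts of the claim track automatically through the chain, since all sign statements above are strict for $\theta\in(1/2,1)$ and the inequalities become equalities only in the endpoint or root situations. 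The most delicate step is this concavity argument for $F$; the rest is bookkeeping once the factored form of $h_1,h_2$ is in hand.
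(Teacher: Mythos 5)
Your proof is correct and follows essentially the same route as the paper's: you factor $h_i(j)=\theta^{j-1}(Aj+B_i)+C_i$ with $A=(1-\theta)^2(1+\theta)$ (the paper phrases this as intersecting a line with an exponential, which is the same computation), you use the identity $h_2(j)-h_1(j)=\theta\bigl(1-(1+\theta)\theta^j\bigr)$ to reduce $j_2(\theta)>j_1(\theta)$ to $h_1(j^{\#})>0$ at $j^{\#}=-\ln(1+\theta)/\ln\theta$, and you arrive at exactly the transcendental inequality the paper needs, namely $-\ln(1+\theta)/\ln\theta>\theta^2/(1-\theta^2)$, i.e.\ your $F(\theta)>0$. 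The one substantive difference is that you actually prove the two calculus facts the paper explicitly omits: the slope comparison at $j=0$ governing uniqueness of the root of $h_2$ (your $\phi(\theta)=1-\theta^2+\theta\ln\theta>0$, established via $\phi''<0$ and $\phi'(1/2)<0$) and the final inequality $F(\theta)>0$ (via $F''(\theta)=2[\ln u+(1-u)]<0$ with $u=\theta/(1+\theta)$, $F'(1/2)=1-\ln 3<0$, $F(1)=0$), whereas the paper defers both to ``simple calculus or computer verification.'' Your sign bookkeeping in the comparison step is also the correct one: you need $j^{\#}<j_1(\theta)$, which is what $h_1(j^{\#})>0$ delivers; the paper's text at one point asserts the reversed inequality $j_3(\theta)>j_1(\theta)$, which is a typo that its own subsequent computation of $h_1(j_3(\theta))>0$ contradicts.
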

\begin{proof}

We first show that both equations have exactly one root. The root of~\eqref{root2} may be viewed as the intersection of $f_1(j) = j \cdot (\theta^3-\theta^2-\theta+1) + \theta^3+\theta$ and $f_2(j) = (\frac{1}{\theta})^{j-1}$. The function $f_1$ is linear in $j$, with positive slope $\theta^3-\theta^2-\theta+1 = (\theta-1)^2 \cdot (\theta+1)$ and positive intersection value $\theta^3+\theta$ (at $j=0$); The function $f_2$ is an exponential function with base $1/\theta > 1$ and intersection value $\theta < \theta^3 + \theta$ (at $j=0$). Therefore, $f_1$ and $f_2$ only have one real positive intersection and $h_1>0$ for $0 < j < j_1(\theta)$ (by graphing). 

Similarly, the root of~\eqref{root1} may be viewed as the intersection of $g_1(j) = j \cdot (\theta^3-\theta^2 - \theta + 1) + \theta - \theta^2$ and $g_2(j) = (\theta - \theta^2) \cdot (\frac{1}{\theta})^j$. We have that their intersection at $j=0$ are both equal to $\theta - \theta^2$ and the slope for $g_1$ at $j=0$ is $\theta^3 - \theta^2 - \theta + 1 = (\theta - 1)^2 \cdot (\theta + 1) $, which is always larger than the slope of $g_2$ at $j = 0$, i.e., $ - \ln{\theta} \cdot (\theta - \theta^2)$ for $\frac{1}{2}<\theta<1$. The 
proof is omitted, as it follows from simple calculus. Therefore, $g_1$ and $g_2$ only have one real positive intersection and $h_2>0$ for $0 < j < j_2(\theta)$ (by graphing). 

We next show that $h_2(j_1(\theta)) > 0$ for every $\frac{1}{2}< \theta < 1$. Setting $h_1 = 0$, and plugging $j_1(\theta)$ into $h_2$ shows that $h_2(j_1(\theta)) > 0$ is equivalent to
\begin{equation}\label{root1'}
1 - \theta^{j_1(\theta)} - \theta^{j_1(\theta)+1} > 0, \text{ for all }1/2<\theta<1.
\end{equation}
Let $h_3(j) = 1 - \theta^{j} - \theta^{j+1}$. The function $h_3$ has exactly one positive real root $j_3(\theta) = - \frac{\ln{(1+\theta)}}{\ln{\theta}}$ and $h_3(0) = - \theta < 0$ when $\frac{1}{2}<\theta<1$. If we can show that $j_3(\theta) > j_1(\theta)$ for all $1/2 < \theta < 1$ then~\eqref{root1'} is true; equivalently, if we can show that $h_1(j_3(\theta)) > 0$, then our claim follows. 

To prove this, we show that 
$$h_1(j_3(\theta)) = \frac{(j_3(\theta)+1) \cdot \theta^3 - j_3(\theta) \cdot \theta^2 - (j_3(\theta)-1) \cdot \theta + j_3(\theta) - \theta - \theta^2}{\theta(1+\theta)} $$
$$= \frac{(\theta - 1) \cdot (j_3(\theta) \cdot \theta^2 + \theta^2 - j_3(\theta))}{\theta(1+\theta)} > 0,$$ i.e. $j_3(\theta) \cdot \theta^2 + \theta^2 - j_3(\theta) < 0$, 
which is equivalent to showing that $j_3(\theta) =   - \frac{\ln{(1+\theta)}}{\ln{\theta}} > \frac{\theta^2}{1-\theta^2}$. We omit the proof of the fact that $$- \frac{\ln{(1+\theta)}}{\ln{\theta}} > \frac{\theta^2}{1-\theta^2}, \text{ when } 1/2< \theta < 1;$$ simple calculus or computer verification can be used to verify 
that this fact is true.
\end{proof}\qed

%%%%%%%%%%%%%%%%%%%%%%%%%%%%%%%%%%%%%%%%%%%%%%%%%%%%%%%%%%%%%%%%%%%%%%%

\textbf{Proof of Theorem~\ref{solution-2}:} From Theorem~\ref{solve-2} and by simplifying the previous expressions, we arrive at
$$\frac{W_1(N,k)}{(P_{N})!} = \theta^{N-k-2} \cdot \frac{(1-\theta)^2}{(1-\theta^N)(1-\theta^{N-1})} \cdot (\frac{1-\theta^{N-k-1}}{1-\theta} + \frac{1-\theta^{N-k-2}}{1-\theta} \frac{1-\theta^k}{1-\theta^{k+1}} + \frac{1-\theta^{N-k-3}}{1-\theta} \frac{1-\theta^k}{1-\theta^{k+2}} + \ldots $$
$$+ \frac{1-\theta^2}{1-\theta} \frac{1-\theta^k}{1-\theta^{N-3}} + \frac{1-\theta}{1-\theta} \frac{1-\theta^k}{1-\theta^{N-2}}) + \theta^{N-k} \cdot \frac{(1 - \theta)(1 - \theta^k)}{(1-\theta^N)(1-\theta^{N-1})}.$$

Let $N \to \infty$ and consider the following two cases.

\textbf{Case 1:} $N-k \to \infty$. Since $\theta< \frac{1}{2}$, $\theta^{N-k-2} \to 0$ exponentially, $\frac{(1-\theta)^2}{(1-\theta^N)(1-\theta^{N-1})} \to (1- \theta)^2 \le 1$, and each term in the sum within the parentheses is bounded above by $2$; thus, the sum under parentheses is bounded by $2(N-k-1)$, and $(1 - \theta)(1 - \theta^k) \le 2$. Hence, $\frac{W_1(N, k)}{(P_{N})!} \to 0$.

\textbf{Case 2:} $N-k \not\to \infty$. Then we must have $k \to \infty$. If $N-k=1$ then $\frac{W_1^*(N, k)}{(P_{N})!}$ is zero since the strategy ``reject all but the last candidate and then pick the next left-to-right maximum'' only makes a selection when $N$ appears in the last position, and this selection results in a loss; thus, if $N-k=1$ the best strategy is to simply just accept the last candidate and in this case the probability of winning is $$\theta \cdot \frac{(P_{N-1})!}{(P_{N})!} = \theta \cdot \frac{1-\theta}{1-\theta^{N}} \to \theta (1 - \theta) \text{ as }N \to \infty.$$

For $N-k \ge 2$,
$$\frac{W_1^*(N,k)}{(P_{N})!} \to \theta^{N-k-2} \cdot (1-\theta)^2 \cdot (\frac{1-\theta^{N-k-1}}{1-\theta} + \frac{1-\theta^{N-k-2}}{1-\theta}  + \ldots + \frac{1-\theta^2}{1-\theta}  + \frac{1-\theta}{1-\theta} ) + \theta^{N-k} \cdot (1 - \theta)$$
$$=  \theta^{N-k-2} \cdot (1-\theta) \cdot (N-k-1 - \theta \cdot \frac{1-\theta^{N-k-1}}{1-\theta}) + \theta^{N-k} \cdot (1 - \theta) \overset{\mathrm{x:=N-k}}{=} \theta^{x-2} \cdot ((x-1)(1-\theta) - \theta \cdot (1-\theta^{x-1}))+\theta^x \cdot (1-\theta).$$

Since when $N-k = 2$, the probability converges to $(1- \theta)^2 + \theta^2(1-\theta)> \theta(1- \theta)$ when $0 < \theta < \frac{1}{2}$. Thus, we only need to consider the cases when $N-k \ge 2$. Let $$f(x, \theta) := \theta^{x-2} \cdot ((x-1)(1-\theta) - \theta \cdot (1-\theta^{x-1})) + \theta^x \cdot (1-\theta).$$

\begin{claim}\label{optimize}
For $x \ge 3$ and fixed $0<\theta< \frac{1}{2}$, we have $$f(x+1, \theta) - f(x, \theta) < 0.$$
\end{claim}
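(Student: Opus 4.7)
The plan is to expand the discrete difference $f(x+1,\theta)-f(x,\theta)$ and factor out all manifestly positive quantities, reducing the inequality to a cleaner one. Writing $A(x):=(x-1)(1-\theta)-\theta(1-\theta^{x-1})$ so that $f(x,\theta)=\theta^{x-2}A(x)+\theta^x(1-\theta)$, I first pull out $\theta^{x-2}$ from the $A$-terms to get $\theta^{x-2}\bigl(\theta A(x+1)-A(x)\bigr)-(1-\theta)^2\theta^x$. A direct simplification using $1-\theta^2=(1-\theta)(1+\theta)$ gives $\theta A(x+1)-A(x)=(1-\theta)\bigl[(1+\theta)(1-\theta^x)-x(1-\theta)\bigr]$, so that
\[
f(x+1,\theta)-f(x,\theta)\;=\;\theta^{x-2}(1-\theta)\cdot g(x,\theta),\qquad g(x,\theta):=(1+\theta)(1-\theta^x)-(1-\theta)(x+\theta^2).
\]
Since $\theta^{x-2}(1-\theta)>0$, it suffices to establish $g(x,\theta)<0$ for all integers $x\ge 3$ and $0<\theta<1/2$.

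Next I would show that $g(\cdot,\theta)$ is strictly decreasing on integer $x\ge 1$. A one-line computation gives
\[
g(x+1,\theta)-g(x,\theta)\;=\;(1-\theta)\bigl((1+\theta)\theta^x-1\bigr),
\]
and for $0<\theta<1/2$ and $x\ge 1$ we have $(1+\theta)\theta^x\le(1+\theta)\theta=\theta+\theta^2<\tfrac12+\tfrac14<1$, so this difference is strictly negative.

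Finally I would check the base case $x=3$. Using $1-\theta^3=(1-\theta)(1+\theta+\theta^2)$ I get
\[
g(3,\theta)\;=\;(1-\theta)\bigl[(1+\theta)(1+\theta+\theta^2)-3-\theta^2\bigr]\;=\;(1-\theta)\bigl(\theta^3+\theta^2+2\theta-2\bigr),
\]
and the bracketed factor is negative for $\theta<1/2$ since $\theta^3+\theta^2+2\theta<\tfrac18+\tfrac14+1<2$. Combining the base case with the monotonicity established above yields $g(x,\theta)<0$ for every integer $x\ge 3$, hence $f(x+1,\theta)-f(x,\theta)<0$, as claimed.

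The main obstacle is the algebraic bookkeeping required to collapse $f(x+1,\theta)-f(x,\theta)$ into the clean product $\theta^{x-2}(1-\theta)\,g(x,\theta)$; once that reduction is in hand, both the monotonicity argument and the base-case verification are elementary. The assumption $\theta<1/2$ enters crucially only at two points: in ensuring $(1+\theta)\theta<1$ for monotonicity, and in the bound $\theta^3+\theta^2+2\theta<2$ for the base case.
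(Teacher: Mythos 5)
Your proof is correct and follows essentially the same route as the paper: both expand $f(x+1,\theta)-f(x,\theta)$ and factor out $\theta^{x-2}(1-\theta)$, reducing the claim to the negativity of the remaining factor. The only difference is in the finishing step and is cosmetic --- the paper bounds that factor directly by discarding the term $-(1+\theta)\theta^x$ and using $(1+\theta)-x(1-\theta)\le 4\theta-2<0$, whereas you absorb everything into $g(x,\theta)$ and establish its negativity via discrete monotonicity in $x$ plus the base case $x=3$; both hinge on exactly the same use of $x\ge 3$ and $\theta<\tfrac12$.
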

\begin{proof}
Since $0<\theta<\frac{1}{2}$ and $x \ge 3$, $$f(x+1, \theta) - f(x, \theta) = x(1-\theta)\theta^{x-1} - \theta^x + \theta^{2x} - (x-1)(1-\theta)\theta^{x-2}+\theta^{x-1}-\theta^{2x-2} + (\theta^{x+1} - \theta^x)(1-\theta)$$
$$= \theta^{x-2} \cdot (1-\theta) \cdot (-x(1-\theta)+(1+\theta) - (1+\theta)\cdot \theta^x) + (\theta^{x+1} - \theta^x)(1-\theta) < \theta^{x-2} \cdot (1-\theta) \cdot ((1+\theta) - x(1-\theta)) + (\theta^{x+1} - \theta^x)(1-\theta)$$
$$\le \theta^{x-2} \cdot (1-\theta) \cdot ((1+\theta) - 3(1-\theta))+ (\theta^{x+1} - \theta^x)(1-\theta) = \theta^{x-2} \cdot (1-\theta) \cdot (4 \theta - 2)+ (\theta^{x+1} - \theta^x)(1-\theta) < 0.$$
\end{proof}
Thus we only need to compare $f(2, \theta)$ and $f(3, \theta)$. It turns out that 
$$f(2, \theta) - f(3, \theta) = (1-\theta+\theta^2)(1-\theta)-(2\theta - \theta^2)(1-\theta) =(1-2\theta)(1-\theta)^2 >0,$$ for all $0 < \theta < \frac{1}{2}$.  Therefore, the optimal strategy is to reject all but the last two candidates and then accept the next left-to-right maximum; if no selection is made before the last position then the only option is to accept the last position. The maximum probability of winning is $(1- \theta)(1-\theta+\theta^2)$ (See Figure~\ref{best-3}). 

\hfill \qed

\vspace{5mm}

\textbf{Proof of Theorem~\ref{wk1k2}:} We need to consider three cases depending on the value in the last position.

\textbf{Case 1:} The last position has a value $i \in \{1, 2, \ldots, N-2\}$. These contribute $\theta^{N-i}$ to the sum. The positions in $[1, N-1]$ may be viewed as a $(k_1, k_2)$-winnable permutation in $S_{N-1}$. Thus, together, these contribute $(\theta^{N-1} + \theta^{N-2} + \ldots + \theta^2) \cdot W_1(N-1, k_1, k_2)$ to the expression of interest.

\textbf{Case 2:} The last position is occupied by the value $N-1$. This contributes $\theta$ to the sum. The elements at positions $[1, N-1]$ must form a non-$(k_1, k_2)$-pickable permutation, with a contribution of $T_1(N-1, k_1, k_2)$.

\textbf{Case 3:} The last position is occupied by the value $N$. Then the value $N-1$ is at a position $i \in [k_1+1, N-1]$. The positions $[1, i-1]$ form a non-$(k_1,k_2)$-pickable permutation if $i \ge k_2+2$, which contributes $T_1(i-1, k_1, k_2)$ to the expression of interest; and, a non-$k_1$-pickable permutation if $k_1+1 \le i \le k_2+1$, which contributes $T_1(i-1, k_1)$ in this case. Furthermore, $B(i-1, N-i-1)$ counts the inversions in-between. There are no restrictions on the positions in $[i+1, N-1]$ and these contribute $(P_{N-i-1)!}$. The value $N-1$ at position $i$ contributes $\theta^{N-i-1}$. \qed

\vspace{5mm}

\textbf{Proof of Theorem~\ref{solvewk1k2}:} For $N = k_2+1$, the only differences between the sets of $(k_1, k_2)$-winnable and $k_1$-winnable permutations are those permutations with value $k_2$ (which equals $N-1$ in this case) at position $k_2+1$ (which equals $N$ in this case) and the value $k_2+1$ (which equals $N$ in this case) at a position in $[1, k_1]$. Thus, we have 
$$W_1(k_2+1, k_1, k_2) = W_1(k_2+1, k_1) + (\theta^{k_2} + \ldots + \theta^{k_2+1-k_1}) \cdot (P_{k_2-1})!.$$ 

Similarly to what we did for the proof of Theorem~\ref{solvew1} we can solve for the case when $N \ge k_2+2$. This completes the proof.\qed

\vspace{5mm}

\textbf{Proof of Theorem~\ref{resultk1k2}:} Let $N \to \infty$. Then 
$$\frac{W_1(N, k_1, k_2)}{(P_{N})!} = \frac{\theta^{N-k_1-2} \cdot P_{k_1}}{P_{N-1}\cdot P_{N}} \cdot (\theta^{N-k_2+1} + \theta^{N-k_2+1} \cdot P_{k_2-1} \cdot \sum\limits_{i = k_2}^{N-2} \frac{1}{P_i}$$
$$ + \sum\limits_{i = k_1}^{k_2} \frac{P_{N-i-1}}{P_i} + \theta \cdot P_{k_2-1} \cdot \sum\limits_{i = k_2}^{N-3} \frac{\theta^{i-k_2} \cdot P_{N-i-2}}{P_i \cdot P_{i+1}}) \to \theta^{N - k_1 - 2} \cdot (1 - \theta^{k_1}) \cdot (1 - \theta) \cdot$$ 
$$ \left(\theta^{N-k_2+1} + \theta^{N-k_2+1} \cdot (\frac{1-\theta^{k_2-1}}{1-\theta^{k_2}} + \ldots + \frac{1-\theta^{k_2-1}}{1-\theta^{N-2}}) + (\frac{1-\theta^{N-k_1-1}}{1-\theta^{k_1}} + \ldots + \frac{1-\theta^{N-k_2-1}}{1-\theta^{k_2}}\right)$$

\begin{equation}\label{winprobk1k2-1}
+ \theta \cdot (1-\theta^{k_2-1}) \cdot \left(\frac{1 \cdot (1 - \theta^{N-k_2-2})}{(1 - \theta^{k_2})\cdot (1 - \theta^{k_2+1})} + \frac{\theta \cdot (1 - \theta^{N-k_2-3})}{(1 - \theta^{k_2+1}) \cdot (1 - \theta^{k_2+2})} + \ldots + \frac{\theta^{N-k_2-3} \cdot (1-\theta)}{(1 - \theta^{N-3}) \cdot (1 - \theta^{N-2})})\right). 
\end{equation}

\textbf{Case 1:} $N - k_1 \to \infty$. Since $\frac{1}{2}< \theta < 1$, $\theta^{N-k_1-2} \to 0$ exponentially. We hence have~\eqref{winprobk1k2-1} is upper-bounded by
$$\theta^{N-k_1-2} \cdot (1 + 1 \cdot (N - k_2 - 1)) + \frac{k_2-k_1+1}{1 - \theta} + \frac{N-k_2-2}{(1-\theta)^2}) \to 0.$$

\textbf{Case 2:} $N - k_1 \not\to \infty$. Then $N - k_2 \not\to \infty$, $k_1 \to \infty$, and $k_2 \to \infty$. 

Let $x = N - k_1$ and $y = N - k_2$. Then~\eqref{winprobk1k2-1} converges to
$$f(x,y) = \theta^{x-2} \cdot (1 - \theta) \cdot (\theta^{y+1} + \theta^{y+1} \cdot (y-1) + (x - y + 1 - \theta^{y-1} \cdot \frac{1 - \theta^{x-y+1}}{1 - \theta}) + \theta \cdot (\frac{1- \theta^{y-2}}{1-\theta} - \theta^{y-2} \cdot (y - 2))).$$ \qed

%%%%%%%%%%%%%%%%%%%%%%%%%%%%%%%%%%%%%%%%%%%%%%%%%%%%%%%%%%%%%%%%%%%%%%%%%%
\textbf{The Proof of Theorem~\ref{resultk2k1}:}

We call a permutation $\pi \in S_N$ {\em $(k_2, k_1)$-winnable} if it results in a win using the $(k_2, k_1)$-strategy. Let $W_2(N, k_2, k_1)$ denote $$\sum\limits_{(k_2,k_1)-\text{winnable permutations }\pi \in S_N}  \theta^{\text{\#inversions in }\pi}.$$

Throughout this subsection, we call a permutation $\pi \in S_N$ $(k_2, k_1)$-pickable if it results in at least one selection using the $(k_2, k_1)$-strategy. Let $T_2(N,k_2,k_1)$ denote $$\sum\limits_{\text{non}-(k_2,k_1)-\text{pickable permutations }\pi \in S_N}  \theta^{\text{\#inversions in }\pi}.$$

Recall that by Lemma~\ref{t2nk} and Remark~\ref{t2n0-formula} we have $T_2(k,k) = (P_k)!$ and when $N \ge k+1$ and $k \ge 0$, 
\begin{equation}\label{t2-appendix}
T_2(N, k) = (P_k)! \cdot (1+\theta^2 \cdot P_{k-1}) \cdot (1+\theta^2 \cdot P_k) \cdot \ldots \cdot (1+\theta^2 \cdot P_{N-2}).
\end{equation}
%Moreover, we have $T_2(1,0) = T_2(2,0) = 1$; for $k = 0$ and $N \ge 3$, we have $$T_2(N, 0) = (1+\theta^2)(1+\theta^2 \cdot P_2) \cdots (1+\theta^2 \cdot P_{N-2}).$$ Note that if we define $P_{-1} = P_0 = 0$ and $(0)! = 1$, then~\eqref{t2-appendix} still holds for $k = 0$.

\begin{lemma}
For $N \ge k_1$,
$$T_2(N, k_2, k_1) = \theta^{2N-2k_1} \cdot \frac{(P_{N-2})!}{(P_{k_1-2})!} \cdot T_2(k_1, k_2).$$
\end{lemma}
\begin{proof}
Let $\pi \in S_N$ be non-$(k_2, k_1)$-pickable. Then the value $N$ must be in $[1, k_1]$ since otherwise it would be picked as a left-to-right maximum if no selection was made before. The value $N-1$ must also be in $[1, k_1]$ since otherwise it would be picked as a left-to-right second-maximum if no selection was made before. 

The positions $[1, k_1]$ form a non-$k_2$-pickable permutation and thus positions $[1, k_1]$ contribute $T_2(k_1, k_2) = (P_k)! \cdot (1+\theta^2 \cdot P_{k-1}) \cdot (1+\theta^2 \cdot P_k) \cdot \ldots \cdot (1+\theta^2 \cdot P_{k_1-2})$. Furthermore, each of the values $N$ and $N-1$ contribute $\theta^{N-k_1}$ for the inversions involving positions $[k_1+1, N]$. There are no restrictions for positions in $[k_1+1, N]$, and thus they contribute $(P_{N-k_1})!$. Moreover, $B(k_1-2, N-k_1)$ counts the in-between inversions. Thus, together we have $$T_2(N, k_2, k_1) = \theta^{2N-2k_1} \cdot T_2(k_1, k_2) \cdot B(k_1-2, N-k_1) \cdot (P_{N-k_1})! = \theta^{2N-2k_1} \cdot \frac{(P_{N-2})!}{(P_{k_1-2})!} \cdot T_2(k_1, k_2).$$
\end{proof}

\begin{lemma}\label{w2k1k2}
For $k_2 \le k_1 \le N-2$,
$$W_2(N, k_2, k_1) = \theta^2 \cdot P_{N-2} \cdot W_2(N-1, k_2, k_1) + \theta \cdot T_2(N-1, k_2, k_1) +$$
$$ \sum\limits_{i = k_1+1}^{N-1} \theta^{N-i-1} \cdot T_2(i-1, k_2, k_1) \cdot B(i-1, N-i-1) \cdot [N-i-1]!.$$
\end{lemma}

\begin{proof}
We have to address three cases depending on the value of the last position.

\textbf{Case 1:} The last position has values $i \in \{1, 2, \ldots, N-2\}$. They contribute $\theta^{N-i}$ and the positions in $[1, N-1]$ can be viewed as a $(k_2, k_1)$-winnable permutation in $S_{N-1}$. Thus, together, these contribute $(\theta^{N-1} + \theta^{N-2} + \ldots + \theta^2) \cdot W_2(N-1, k_2, k_1).$

\textbf{Case 2:} The last position is the value $N-1$. The value $N-1$ at the last position contributes $\theta$. Then the positions $[1, N-1]$ must form a non-$(k_2, k_1)$-pickable permutation. It contributes $T_2(N-1, k_2, k_1)$.

\textbf{Case 3:} The last position has the value $N$. Then the value $N-1$ is at some position $i \in [k_1+1, N-1]$ since it must be picked as a left-to-right maximum. The positions $[1, i-1]$ form a non-$(k_2,k_1)$-pickable permutation and it contributes $T_2(i-1, k_2, k_1)$. We also have $B(i-1, N-i-1)$ counting the inversions between positions $[1, i-1]$ and $[i+1, N-1]$. There are no restrictions on positions in $[i+1, N-1]$ and thus they contributes $(P_{N-i-1})!$. The value $N-1$ at position $i$ contributes $\theta^{N-i-1}$ with respect to the positions $[i+1, N-1]$.
\end{proof}
%%%%%%%%%%%%%%%%%%%%%%%%%%%%%%%%%%%%%%%%%%%%%%%%%%%%%%%%%%%%%%%%%%%%%%%%%%%

It turns out that we can solve the recurrence relation in Lemma~\ref{w2k1k2}. Recall that for $k \ge 1$, we have $$W_2(N, k) = \theta \cdot T_2(N, k) - \theta^{2N-2k+1} \cdot (P_{N-2})! \cdot P_k \cdot P_{k-1} $$ $$= \theta \cdot (P_k)! \cdot \{(1+\theta^2 \cdot P_{N-2})(1+\theta^2 \cdot P_{N-3}) \cdots (1+\theta^2 \cdot P_k) \cdot (1+\theta^2 \cdot P_{k-1}) - \theta^{2N-2k} \cdot P_{N-2} \cdot P_{N-3} \cdots P_{k-1}\}.$$ For $k=0$, we have $$W_2(N, 0) = \theta \cdot T_2(N, 0).$$ 

\begin{lemma}
For $k_2 \le k_1 \le N-3$, $$W_2(N, k_2, k_1) = (\theta^{2N-2k_1+1} \cdot \frac{(P_{N-2})!}{(P_{k_1 - 2})!} + \theta^{2N-2k_1-1} \cdot \frac{(P_{N-2})!}{(P_{k_1 - 2})!} \cdot \sum\limits_{i=k_1-1}^{N-2} \frac{1}{P_i} $$
$$+ \theta^{N-k_1-2} \cdot \frac{(P_{N-2})!}{(P_{k_1 - 2})!} \cdot \sum\limits_{i=k_1-1}^{N-3} \frac{\theta^{i-k_1+1} \cdot (1 + \theta + \ldots + \theta^{N-i-3})}{P_i \cdot P_{i+1}}) \cdot T_2(k_1, k_2) - \theta^{2N-2k_2+1} \cdot (P_{N-2})! \cdot P_{k_2} \cdot P_{k_2-1}.$$

Moreover, when $N = k_1+1$, $$W_2(k_1+1, k_2, k_1) = \theta \cdot (1 + \theta^2 \cdot P_{k_1-1}) \cdot T_2(k_1, k_2) - \theta^{2k_1-2k_2+3} \cdot (P_{k_1-1})! \cdot P_{k_2} \cdot P_{k_2-1}; \text{ and when }N = k_1+2,$$
$$W_2(k_1+2, k_2, k_1) = (1 + \theta^3 \cdot P_{k_1-1} + \theta^3 \cdot P_{k_1} + \theta^5 \cdot P_{k_1} \cdot P_{k_1-1}) \cdot T(k_1, k_2) - \theta^{2k_1-2k_2+5} \cdot (P_{k_1})! \cdot P_{k_2} \cdot P_{k_2-1}.$$
\end{lemma}

\begin{proof}
For $N = k_1+1$, $(k_2, k_1)$-winnable permutations are the same as $k_2$-winnable permutations. Thus, 
$$W_2(k_1+1, k_2, k_1) = W_2(k_1+1, k_2) = \theta \cdot (1 + \theta^2 \cdot P_{k_1-1}) \cdot T_2(k_1, k_2) - \theta^{2k_1-2k_2+3} \cdot (P_{k_1-1})! \cdot P_{k_2} \cdot P_{k_2-1}.$$

Similarly to Theorem~\ref{solvew1}, we can solve for the case when $N \ge k_1+2$ by invoking the recurrence relation in Theorem~\ref{w2k1k2}. We omit the tedious simplification process.
\end{proof}

We are now ready to complete the proof of Theorem~\ref{resultk2k1}: Let $N \to \infty$. By~\eqref{t2-appendix} and note that $1+ \theta^2 \cdot P_k = P_{k+2} - \theta$, $$\frac{W_2(N, k_2, k_1)}{(P_N)!} = \frac{T_2(k_1, k_2)}{(P_{k_1-2})!} \cdot \frac{(1 - \theta)^2}{(1 - \theta^N) \cdot (1 - \theta^{N-1})} \cdot (\theta^{2N-2k_1+1} + \theta^{2N-2k_1-1} \cdot (1 - \theta) \cdot \sum\limits_{i = k_1-1}^{N-2} \frac{1}{1 - \theta^i}$$
$$+ \theta^{N-k_1-2} \cdot \sum\limits_{i = k_1-1}^{N-3} \frac{(\theta^{i-k_1+1} - \theta^{N-k_1-1})(1 - \theta)}{(1 - \theta^i)(1 - \theta^{i+1})}) - \theta^{2N-2k_2+1} \cdot \frac{(1 - \theta^{k_2})(1 - \theta^{k_2-1})}{(1 - \theta^N)(1 - \theta^{N-1})}$$
$$\to (1 - \frac{\theta(1 - \theta)}{1 - \theta^{k_2+1}}) \cdots (1 - \frac{\theta(1 - \theta)}{1 - \theta^{k_1-2}}) \cdot (1 - \theta^{k_1-1} - \theta + \theta^2) \cdot (1 - \theta^{k_1} - \theta + \theta^2) \cdot (\theta^{2N-2k_1+1} + \theta^{2N-2k_1-1} \cdot (1 - \theta) \cdot \sum\limits_{i = k_1-1}^{N-2} \frac{1}{1 - \theta^i}$$
\begin{equation}\label{winprobk1k2-2}
+ \theta^{N-k_1-2} \cdot \sum\limits_{i = k_1-1}^{N-3} \frac{(\theta^{i-k_1+1} - \theta^{N-k_1-1})(1 - \theta)}{(1 - \theta^i)(1 - \theta^{i+1})}) - \theta^{2N-2k_2+1} \cdot (1 - \theta^{k_2}) \cdot (1 - \theta^{k_2-1}).
\end{equation}

When $\frac{1}{2} < \theta < 1$, 
\begin{equation}\label{bounded}
(1 - \frac{\theta(1 - \theta)}{1 - \theta^{k_2+1}}) \cdots (1 - \frac{\theta(1 - \theta)}{1 - \theta^{k_1-2}}) \cdot (1 - \theta^{k_1-1} - \theta + \theta^2) \cdot (1 - \theta^{k_1} - \theta + \theta^2) \le 4.
\end{equation}

\textbf{Case 1:} $N-k_1 \to \infty$. Then $N - k_2 \to \infty$. By~\eqref{bounded}, $\frac{1}{2} < \theta < 1$, $\theta^{N-k_1} \text{ and } \theta^{N-k_2} \to 0$ exponentially, we have $~\eqref{winprobk1k2-2} \to 0$.

\textbf{Case 2:} $N - k_1 \not\to \infty$. Then $k_1 \to \infty$. We have to consider two subcases depending on whether $N - k_2 \to \infty$.

\textbf{Case 2.1:} $N - k_2 \to \infty$. Then $k_1 - k_2 \to \infty$ and $\theta^{2N-2k_2+1} \cdot (1 - \theta^{k_2}) \cdot (1 - \theta^{k_2-1}) \to 0$ exponentially. Again, we have to consider two subcases depending on whether $k_2 \to \infty$.

\textbf{Case 2.1.1:} $k_2 \to \infty$. Then since $0.75 < 1-\theta+\theta^2 < 1$ when $\frac{1}{2} < \theta < 1$,
$$(1 - \frac{\theta(1 - \theta)}{1 - \theta^{k_2+1}}) \cdots (1 - \frac{\theta(1 - \theta)}{1 - \theta^{k_1-2}}) \cdot (1 - \theta^{k_1-1} - \theta + \theta^2) \cdot (1 - \theta^{k_1} - \theta + \theta^2) \to (1-\theta+\theta^2)^{k_1-k_2} \to 0$$
exponentially. Thus, we have $\eqref{winprobk1k2-2} \to 0$.

\textbf{Case 2.1.2:} $k_2 \not\to \infty$. Then since $(1 - \frac{\theta(1 - \theta)}{1 - \theta^{k_1-2}}) \to 1 - \theta + \theta^2$, where $0.75< 1-\theta+\theta^2 < 1$, we have $$(1 - \frac{\theta(1 - \theta)}{1 - \theta^{k_2+1}}) \cdots (1 - \frac{\theta(1 - \theta)}{1 - \theta^{k_1-2}}) \le (1 - \frac{\theta(1 - \theta)}{1 - \theta^{k_1-2}})^{k_1-k_2-2} \to 0 \text{ and thus \eqref{winprobk1k2-2}} \to 0.$$  

\textbf{Case 2.2:} $N - k_2 \not\to \infty$. Then $k_2 \to \infty$ and  
$$\eqref{winprobk1k2-2} \to (1-\theta+\theta^2)^{k_1-k_2} \cdot (\theta^{2N-2k_1+1} + \theta^{2N-2k_1-1} \cdot (1 - \theta) \cdot (N - k_1) $$
$$ + \theta^{N-k_1-2} \cdot (1 - \theta^{N-k_1-1} - (N-k_1-1) \cdot \theta^{N-k_1-1} \cdot (1 - \theta)) - \theta^{2N-2k_2+1}.$$

Let $x = N - k_1$, $y = N-k_2$, and $$\eqref{winprobk1k2-2} \to (1-\theta+\theta^2)^{y-x} \cdot (\theta^{2x+1} + \theta^{2x-1} \cdot (1 - \theta) \cdot x + \theta^{x-2} \cdot (1-\theta^{x-1} - (x-1) \cdot \theta^{x-1} \cdot (1 - \theta))) - \theta^{2y+1} =:g(x, y).$$
The maximum value of $g(x,y)$ is positive and thus the optimal $(k_2, k_1)$-strategy must satisfy $N-k_1 \not \to \infty$ and $N-k_2 \not \to \infty$. \hfill \qed
\end{document}